\documentclass[11pt]{amsart}
\usepackage[T1]{fontenc}
\usepackage[utf8]{inputenc}
\usepackage{lmodern}
\usepackage{amsmath,amssymb,amsthm,mathtools}
\usepackage{microtype}
\usepackage{booktabs,array}
\usepackage{enumitem}
\usepackage{needspace}
\usepackage[hidelinks,unicode]{hyperref}
\usepackage{bookmark}

\numberwithin{equation}{section}
\newtheorem{theorem}{Theorem}
\newtheorem{lemma}[theorem]{Lemma}
\newtheorem{proposition}[theorem]{Proposition}
\newtheorem{corollary}[theorem]{Corollary}
\newtheorem{conjecture}[theorem]{Conjecture}
\theoremstyle{remark}
\newtheorem{remark}[theorem]{Remark}

\newcommand{\CM}{\mathrm{CM}}
\newcommand{\BF}{\mathrm{BF}}
\newcommand{\astar}{\alpha_*}
\newcommand{\dd}{\,\mathrm{d}}
\newcommand{\R}{\mathbb R}
\newcommand{\C}{\mathbb C}
\DeclareMathOperator{\Log}{Log}
\newcommand{\doi}[1]{\href{https://doi.org/#1}{\texttt{doi:#1}}}
\newcommand{\proofstep}[1]{\par\medskip\noindent\emph{#1}\ }
\allowdisplaybreaks[1]

\makeatletter
\def\@settitle{\begin{center}%
  \baselineskip14\p@\relax
  \bfseries
  \@title
  \end{center}%
}
\makeatother

\title[On the Alzer--Berg problem]{On the Alzer--Berg problem: an optimal Bernstein boundary and a uniqueness conjecture}
\author{Valmir Krasniqi}
\address{Institute of Science, Technology, Engineering and Mathematics (STEM), Rr. Nekibe Kelmendi Nr. 26, 10000 Prishtin\"e, Republic of Kosovo}
\email{valmir@stem-ks.org}
\subjclass[2020]{Primary 26A48, 44A10; Secondary 30E20}
\keywords{Bernstein function, complete monotonicity, Laplace density, positivity transfer, optimal boundary, convolution identity}
\date{}
\hypersetup{
  pdftitle={On the Alzer--Berg problem: an optimal Bernstein boundary and a uniqueness conjecture},
  pdfauthor={Valmir Krasniqi},
  pdfsubject={Bernstein functions and the two-parameter Alzer--Berg problem},
  pdfkeywords={Bernstein function, complete monotonicity, Laplace density, optimal boundary}
}

\begin{document}
\begin{abstract}
We study the two-parameter exponential family associated with the complete-monotonicity problem of Alzer and Berg.
Strict necessary parameter bounds place every possible Bernstein function in the domain of a regularized Laplace representation.
A quantitative positivity-transfer inequality then yields a linear sufficient condition with the largest possible universal coefficient, expressed in terms of the exact one-parameter critical exponent.
To describe the whole admissible region, we derive convolution identities for parameter derivatives and prove that increasing either normalized parameter destroys positivity at every zero of a nonnegative density.
Combined with uniform tail estimates, this excludes gaps in the admissible parameter intervals and gives a continuous, strictly monotone optimal boundary.
Global nonnegativity of the density and contact with zero characterize that boundary; its inverse determines the complete admissible interval for the second parameter.
The characterization is implicit and requires neither uniqueness nor nondegeneracy of the contact points.
Published numerical approximations of the one-parameter exponent are distinguished from the exact results and from the finite rational certificate used in the proofs.
We also derive a variational formula and a rigorous framework for validated numerical enclosure of the boundary, and formulate a boundary-contact conjecture asserting uniqueness and quadratic contact at every interior boundary point.
\end{abstract}
\maketitle

\section{Introduction}\label{sec:intro}
\subsection{Historical background and the Alzer--Berg problem}
Complete monotonicity relates alternating derivative inequalities to positive integral representations.
Its classical foundations include Bernstein's work on absolutely monotone functions \cite{Bernstein1929} and Widder's treatment of Laplace transforms \cite{Widder1941}.
The corresponding class of Bernstein functions is central to potential theory and the theory of convolution semigroups; systematic accounts are given by Berg and Forst \cite{BergForst1975}, Berg \cite{Berg2008}, and Schilling, Song and Vondra\v cek \cite{SSV2012}.
These connections make parameter-dependent elementary functions useful test cases: an elementary formula may conceal a delicate global positivity problem for its representing measure.

We consider
\begin{equation}\label{eq:family}
F_{a,b}(x)=\left(1+\dfrac ax\right)^{x+b},
\qquad a>0,\quad b\in\R,\quad x>0,
\end{equation}
and write $P_{a,b}(x)=e^a-F_{a,b}(x)$.
In 2002, Alzer and Berg \cite[p.~458]{AlzerBerg2002} asked for all pairs $(a,b)$ for which $P_{a,b}$ is completely monotone.
They established the case $b=0$, $0<a\le1$.
The same paper characterizes complete monotonicity of the opposite difference $F_{a,b}-e^a$ by the condition $a\le2b$ \cite[Theorem~4]{AlzerBerg2002}.
That result concerns a different sign and a different parameter region; it does not answer the question for $P_{a,b}$.
Their subsequent paper \cite{AlzerBerg2006} develops further classes of completely monotone functions and provides additional context for this line of investigation.

\subsection{The one-parameter problem and later developments}
Berg \cite{Berg2005} reformulated the slice $b=0$ in terms of the Bernstein property of
\begin{equation}\label{eq:hintro}
h_\alpha(y)=\left(1+\dfrac1y\right)^{\alpha y},
\qquad \alpha>0,\quad y>0.
\end{equation}
The scaling $F_{a,0}(ay)=h_a(y)$ identifies the two formulations.
Closure under positive powers of order at most one and under pointwise limits shows that the admissible exponents form an initial interval with an included endpoint; see also \cite[Section~1]{BMP2021}.
The problem therefore leads to an exact critical exponent $\astar$, rather than to a separate test for every derivative order.

Shemyakova, Khashin and Jeffrey \cite{SKJ2010} investigated the one-parameter problem computationally through high-order derivatives and obtained numerical evidence for a critical value near $2.299656443$.
Berg, Massa and Peron \cite{BMP2021} subsequently constructed entire Laplace densities for $e^\alpha-h_\alpha$ and derived coefficient expansions in terms of Bell polynomials.
Their work connects these densities with the Bessel function $J_1$ and the Lambert $W$ function, and reports the more precise approximation
\begin{equation}\label{eq:astar-decimal-intro}
\astar\approx2.29965644325346130332.
\end{equation}
The decimal in \eqref{eq:astar-decimal-intro} is taken from the part of \cite[Theorem~1.7]{BMP2021} explicitly designated as numerical results.
Here it is used only for illustration, not as an exact value or a certified enclosure.

Related work addresses stronger or complementary properties of the same one-parameter family.
Berg and Pedersen \cite{BergPedersen2023} identify those members whose derivatives are logarithmically completely monotone, leading to the class of Horn--Bernstein functions.
Cao, Guo, Du and Qi \cite{CGDQ2023} derive formulas for higher derivatives using partial Bell polynomials and give a one-parameter threshold characterization.
These results concern the slice \eqref{eq:hintro}; they do not supply the coordinatewise ordering and the two-parameter boundary established below.
In particular, the existence of the one-parameter threshold and its published numerical approximation are not claimed as new results of this paper.

\subsection{Main results and scope}
Recall that $g\in C^\infty(0,\infty)$ is \emph{completely monotone} if $(-1)^ng^{(n)}\ge0$ for every integer $n\ge0$.
A nonnegative smooth function $f$ is a \emph{Bernstein function} if $f'$ is completely monotone.
We write $\CM$ and $\BF$ for these classes.
The Bernstein representation is
\begin{equation}\label{eq:BFrep}
f(x)=k+dx+\int_{(0,\infty)}(1-e^{-xt})\,\mu(\mathrm{d}t),
\end{equation}
where $k,d\ge0$ and $\int_{(0,\infty)}\min\{1,t\}\,\mu(\mathrm{d}t)<\infty$; see \cite[Theorem~3.2]{SSV2012}.
Scaling the argument by a positive number preserves the Bernstein property.
Since $F_{a,b}(x)\to e^a$ as $x\to\infty$, we have
\begin{equation}\label{eq:equivalence-intro}
F_{a,b}\in\BF\quad\Longleftrightarrow\quad P_{a,b}\in\CM.
\end{equation}
Indeed, the derivative signs coincide, and a Bernstein $F_{a,b}$ is increasing and bounded above by its limit.
This is a special instance of the relation between bounded Bernstein functions and completely monotone complements; compare \cite[Remark~5.5]{Berg2008}.

The present paper separates three questions that must not be conflated.
Necessary inequalities restrict where the Bernstein property can hold.
An explicit sufficient region gives parameter pairs for which it does hold.
An exact boundary description must additionally account for every admissible pair, exclude gaps, and establish the behavior of the boundary as a parameter varies.
A sufficient line with the best universal coefficient need not coincide with that boundary at any fixed negative value of $b$.

Our first contribution is a regularized, integrable Laplace density whose domain covers every possible Bernstein pair.
The coverage follows from necessary inequalities proved independently of the positivity arguments.
A quantitative comparison between normalized densities then gives the sufficient condition $a\le\astar(1+b)$ for $-1<b\le0$.
The coefficient $\astar$ is optimal among constants valid for all such $b$, although the true admissible interval is strictly larger for each fixed $-1<b<0$.

The second contribution is a density-level argument for coordinatewise ordering of the full admissible set.
Convolution identities show that, at every zero of a nonnegative density, increasing either normalized parameter makes the density negative.
Uniform tail estimates turn this local fact into a global no-gap theorem.
Consequently, the whole Bernstein region is described by $0<a\le A(b)$, where $A$ is continuous and strictly increasing on $(-1,0]$.
The inverse boundary gives the complete admissible interval for $b$ when $a$ is fixed.

The resulting description is exact but implicit.
At the boundary the density is globally nonnegative and touches zero at a positive argument.
The global inequality is essential: solving the two tangency equations alone does not certify admissibility.
We neither assume nor prove uniqueness of the contact point, multiplicity exactly two, or differentiability of the boundary.
A variational formula and a rigorous framework for validated numerical enclosure are developed later.
Motivated by the contact equations and the local continuation theorem, we finally formulate the conjecture that every interior boundary density has exactly one positive zero and that this zero has multiplicity two.

\subsection{Organization and notation}
We use $\CM$ and $\BF$ for the classes of completely monotone and Bernstein functions, respectively. Throughout the normalized formulation we write $c=-b$, so that $0\le c<1$ corresponds to $-1<b\le0$. The symbol $\astar$ denotes the exact one-parameter critical exponent defined in Section~\ref{sec:transfer}; decimal values quoted for it are used only for numerical orientation.

The auxiliary results precede the main theorems.
Section~\ref{sec:necessary} proves the necessary restrictions, and Section~\ref{sec:density} constructs the density and its uniform tail estimates.
Section~\ref{sec:transfer} treats the exact one-parameter exponent and proves the positivity-transfer inequality.
The sharp linear sufficient theorem appears in Section~\ref{sec:linear}.
Section~\ref{sec:parameters} establishes the parameter-convolution identities and excludes gaps.
Section~\ref{sec:boundary} proves the optimal-boundary theorem, develops the variational and continuation formulations, and gives the inverse description.
Section~\ref{sec:comparisons} compares the bounds and records illustrative examples.
The paper closes in Section~\ref{sec:conjecture} with a boundary-contact uniqueness conjecture and several concrete open problems suggested by the theory.

\section{Necessary parameter bounds}\label{sec:necessary}
\subsection{The range of the second parameter}
\begin{lemma}[Necessary parameter restriction]\label{lem:b-range}
Let $a>0$ and $b\in\R$.
If $P_{a,b}\in\CM$, then
\begin{equation}\label{eq:b-range}
-1<b\le0.
\end{equation}
The same conclusion holds whenever $F_{a,b}\in\BF$.
\end{lemma}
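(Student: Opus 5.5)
The plan is to read off the restriction entirely from the behaviour of $F_{a,b}$ as $x\to0^+$, using two elementary consequences of the Bernstein property. By \eqref{eq:equivalence-intro} it suffices to treat the case $F_{a,b}\in\BF$. Write
\[
\log F_{a,b}(x)=(x+b)\log\Bigl(1+\frac ax\Bigr)=(x+b)\bigl(\log(x+a)-\log x\bigr).
\]
Any $f\in\BF$ has $f'\ge0$, so $f$ is increasing, and $f(0^+)=k\ge0$ is finite by \eqref{eq:BFrep}. Moreover $f''\le0$, so $f$ is concave. In the case $f(0^+)=0$, concavity makes $x\mapsto f(x)/x$ nonincreasing on $(0,\infty)$.

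\emph{Excluding $b>0$.} As $x\to0^+$ we have $x\log(1+a/x)\to0$, while $b\log(1+a/x)\to+\infty$. Hence $F_{a,b}(x)\to+\infty$. This contradicts the monotonicity and finiteness of $F_{a,b}(0^+)$. So $b\le0$.

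\emph{Excluding $b\le-1$.} For $b<0$ the same expansion gives $F_{a,b}(x)\sim a^{b}x^{-b}=a^{b}x^{|b|}\to0$, so $F_{a,b}(0^+)=0$ and the ratio $g(x)=F_{a,b}(x)/x$ must be nonincreasing. For $b<-1$ we get $g(x)\sim a^{b}x^{|b|-1}\to0$ as $x\to0^+$. But $g>0$ everywhere, so $g$ would have to increase somewhere near $0$, a contradiction. The borderline case $b=-1$ is the only delicate step, because there $g(0^+)=1/a$ is finite and positive and the crude asymptotics give no contradiction. Here I will compute directly:
\[
\log g(x)=(x-1)\log(x+a)-x\log x,\qquad
\frac{\mathrm d}{\mathrm dx}\log g(x)=\log(x+a)+\frac{x-1}{x+a}-\log x-1 .
\]
The right-hand side tends to $+\infty$ as $x\to0^+$ because of the $-\log x$ term. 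So $g$ is strictly increasing on some interval $(0,\varepsilon)$, again contradicting concavity. Thus $b>-1$.

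Together these give $-1<b\le0$ whenever $F_{a,b}\in\BF$. The statement for $P_{a,b}\in\CM$ then follows from \eqref{eq:equivalence-intro}. The main point requiring care is the case $b=-1$: first-order asymptotics must be replaced by the logarithmic-derivative computation, which detects the $x\log(1/x)$ correction term.
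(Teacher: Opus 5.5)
Your proof is correct, and for $b\le-1$ it follows a genuinely different route from the paper; the exclusion of $b>0$ is the same as in the paper.

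\textbf{The paper's route.} It contradicts concavity pointwise near $0$ by computing second-derivative asymptotics through $F''_{a,-c}=F_{a,-c}\bigl((g_c')^2+g_c''\bigr)$. For $c>1$ this gives $F''_{a,-c}(x)\sim a^{-c}c(c-1)x^{c-2}>0$. For $c=1$, however, the leading $x^{-2}$ terms of $(g_1')^2$ and $g_1''$ cancel. The paper therefore needs a second-order expansion of $F_{a,-1}$, with remainder control, to reach $F''_{a,-1}(x)=\frac2a\log\frac ax+O(1)\to+\infty$.

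\textbf{Your route.} You use $F_{a,b}(0^+)=0$ for $b<0$ to turn concavity into the global statement that $F_{a,b}(x)/x$ is nonincreasing. This is the first-order inequality $xF'_{a,b}\le F_{a,b}$ rather than $F''_{a,b}\le0$. The case $b<-1$ then needs only the leading asymptotic $F_{a,b}\sim a^bx^{-b}$. The case $b=-1$ needs only the exact closed form of $\bigl(\log(F_{a,-1}(x)/x)\bigr)'$, whose explicit $-\log x$ term yields the divergence with no cancellation and no error bookkeeping.

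\textbf{Comparison.} Your argument is shorter and more robust at the borderline case. The paper's argument uses concavity only locally, without the boundary value $F(0^+)=0$. It also produces explicit asymptotics for $F''$ that quantify how concavity fails at the origin.
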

\begin{proof}
If $P_{a,b}\in\CM$, then $F'_{a,b}=-P'_{a,b}\ge0$ and $F''_{a,b}=-P''_{a,b}\le0$.
These derivative inequalities also hold directly when $F_{a,b}\in\BF$.
Thus, in either case, $F_{a,b}$ is nondecreasing and concave.
As $x\downarrow0$, we have $F_{a,b}(x)\sim a^b x^{-b}$.
If $b>0$, this would tend to $+\infty$, which is incompatible with a nondecreasing function that is finite at every positive argument.
Hence $b\le0$.

It remains to exclude $b\le-1$.
Write $b=-c$, where $c\ge1$, and first suppose $c>1$.
As $x\downarrow0$,
\[
\log\left(1+\dfrac ax\right)=\log\dfrac ax+O(x),
\]
and therefore
\[
F_{a,-c}(x)=a^{-c}x^c\left(1+O\left(x\log\dfrac1x\right)\right).
\]
To justify the second-derivative asymptotics, set $g_c(x)=\log F_{a,-c}(x)$ and use the exact identities
\begin{equation}\label{eq:g-derivatives}
\begin{aligned}
g'_c(x)&=\log\left(1+\dfrac ax\right)+\dfrac cx-\dfrac{a+c}{x+a},\\
g''_c(x)&=-\dfrac a{x(x+a)}-\dfrac c{x^2}+\dfrac{a+c}{(x+a)^2}.
\end{aligned}
\end{equation}
In particular, $g'_c(x)=c/x+O(\log(1/x))$ and $g''_c(x)=-c/x^2+O(1/x)$.
Consequently,
\[
\begin{aligned}
F''_{a,-c}(x)&=F_{a,-c}(x)\bigl((g'_c(x))^2+g''_c(x)\bigr)\\
&=a^{-c}c(c-1)x^{c-2}(1+o(1)).
\end{aligned}
\]
Since $c(c-1)>0$, the second derivative is positive for all sufficiently small $x>0$, contradicting concavity.
Thus $b<-1$ is impossible.

For the borderline case $b=-1$, put $L(x)=\log(a/x)$.
Using
\[
\log\left(1+\dfrac ax\right)=L(x)+\dfrac xa+O(x^2),
\]
we obtain
\[
(x-1)\log\left(1+\dfrac ax\right)
=-L(x)+x\left(L(x)-\dfrac1a\right)+O(x^2).
\]
Exponentiating gives
\[
F_{a,-1}(x)=\dfrac xa+\dfrac{x^2}a\left(L(x)-\dfrac1a\right)
+O\bigl(x^3L(x)^2\bigr).
\]
Substitution of $c=1$ in \eqref{eq:g-derivatives}, together with this expansion and the identity $F''_{a,-1}=F_{a,-1}((g'_1)^2+g''_1)$, yields
\[
F''_{a,-1}(x)=\dfrac2a\log\dfrac ax-\dfrac3a-\dfrac2{a^2}
+O\bigl(xL(x)^2\bigr).
\]
Since $xL(x)^2\to0$, we conclude that $F''_{a,-1}(x)\to+\infty$ as $x\downarrow0$.
This again contradicts concavity, so $b=-1$ is impossible.
\end{proof}

For the admissible range $-1<b\le0$, put
\begin{equation}\label{eq:kappab}
\kappa_b=\begin{cases}0,&-1<b<0,\\1,&b=0.\end{cases}
\end{equation}
Then $F_{a,b}(0+)=\kappa_b$ and $F_{a,b}(\infty)=e^a$.
In every Bernstein representation of this family the linear term is zero and the L\'evy measure is finite, with mass $e^a-\kappa_b$.
This follows from monotone convergence in \eqref{eq:BFrep} and boundedness of $F_{a,b}$.

\subsection{Necessary upper bounds for the first parameter}
The Bernstein assumption also imposes quantitative restrictions on $a$.
The next proposition is independent of the positivity argument used to prove the main sufficient theorem.

\begin{proposition}[Necessary upper bounds]\label{prop:necessary}
Let $a>0$ and $b\in\R$.
If $F_{a,b}\in\BF$, or equivalently $P_{a,b}\in\CM$, then $-1<b\le0$ and
\begin{equation}\label{eq:necessary-bounds}
0<a<\min\left\{K_0+b,\;2(1+b)+2\sqrt{1+b}\right\},
\end{equation}
where
\begin{equation}\label{eq:K0}
K_0=\dfrac{30\log3-28}{(3\log3-2)^2}
\approx2.952828006549551.
\end{equation}
In particular,
\begin{equation}\label{eq:necessary-domain}
a-b<K_0<3.
\end{equation}
The displayed upper bounds are necessary conditions; no optimality is asserted for them.
\end{proposition}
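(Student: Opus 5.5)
The plan is to treat the two upper bounds separately, since they come from different regimes of $x$. Throughout I assume $F_{a,b}\in\BF$. Lemma~\ref{lem:b-range} already gives $-1<b\le0$, so I write $c=-b\in[0,1)$ and $s=1+b\in(0,1]$. The tools are finitely many consequences of complete monotonicity of $F'_{a,b}$, chosen to involve few derivatives so that the computations stay explicit:
\begin{itemize}
\item sign conditions, such as $F'''_{a,b}\ge0$;
\item log-convexity $F'F'''\ge(F'')^2$, which follows from the Cauchy--Schwarz inequality applied to the Laplace representation of $F'$;
\item the resulting inequality $(\log F')''\ge0$.
\end{itemize}
I will express everything through $g=\log F_{a,b}$. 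The derivatives in \eqref{eq:g-derivatives} make $g'$, $g''$ and $g'''$ explicit rational-plus-logarithm expressions, and
\[
F'=F\,g',\qquad \frac{F''}{F'}=g'+\frac{g''}{g'}.
\]
Log-convexity of $F'$ is then the statement that $x\mapsto g'(x)+g''(x)/g'(x)$ is nondecreasing.

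For the bound $a<2s+2\sqrt{s}$ I would work in the regime $x\downarrow0$, where the leading behaviour is $F\sim a^{-c}x^{c}$. At leading order the log-convexity test gives $-(c-1)\ge0$ (the computation in the proof of Lemma~\ref{lem:b-range}), which holds automatically and does not involve $a$, so the constraint must come from the next order. The $x\downarrow0$ expansions of $g'$ and $g''$ from \eqref{eq:g-derivatives} are needed to second order: $g'=c/x+\log(a/x)-1+O(x)$, and the $O(1)$ corrections carry the $a$-dependence. I then expand $(\log F')''$, or the corresponding Hankel-type determinant of $F'$, up to the first term that depends on $a$.

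I expect this first non-trivial coefficient to be a quadratic in $a$ of the shape $a^2-4sa+4s^2-4s$, possibly up to a positive factor. Its negativity is exactly $|a-2s|<2\sqrt{s}$, and the upper branch gives the stated bound. If the $x\downarrow0$ regime produces only logarithmic terms and no clean quadratic, I would instead run the same test as $x\to\infty$. There I expand $g$ in powers of $1/x$: $(x+b)\log(1+a/x)=a+(ab-a^2/2)/x+(a^3/3-a^2b/2)/x^2+\cdots$. I then use that the Laplace density of $P_{a,b}$ has nonnegative Taylor data forced near $t=0$, through a $2\times2$ positive-semidefiniteness (moment) condition. Strictness of the inequality should follow because equality would force the representing measure of $F'$ to be a single exponential, which $F'_{a,b}$ visibly is not.

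For the bound $a<K_0+b$ I would evaluate one of the above inequalities at the specific point $x=a/2$. The $\log3$ in \eqref{eq:K0} strongly suggests this choice, since there $1+a/x=3$. At $x=a/2$ the quantities $g'$, $g''$ and $g'''$ become affine in $\log3$, with coefficients polynomial in $a$ and $b$, divided by powers of $a$. I would try $F'''\ge0$ first, that is,
\[
(g')^3+3g'g''+g'''\ge0,
\]
and fall back on log-convexity if that gives the wrong form. The hope is that after clearing denominators the inequality is affine in $a-b$, giving
\[
(a-b)(3\log3-2)^2<30\log3-28,
\]
which is $a-b<K_0$. Strictness would again come from excluding equality. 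The numerical value $K_0<3$ in \eqref{eq:necessary-domain} is then a direct check.

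The main obstacle is identifying exactly which finite test, and at which point or order, produces each closed form; the algebra afterwards is routine. For $K_0$ in particular, I would compute all three candidate tests at $x=a/2$ symbolically and keep the one whose threshold matches \eqref{eq:K0}.
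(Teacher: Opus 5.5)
\textbf{The square-root bound.} Your plan for $a<2(1+b)+2\sqrt{1+b}$ cannot work, and the obstruction is structural. Take $c=-b$ and $H(y)=F_{a,b}(ay)$. At both ends, the log-convexity test for $F'$ has a leading term that is strictly positive and independent of $a$.
\begin{itemize}
\item As $y\downarrow0$: $(\log H')''\sim(1-c)/y^2$ when $0<c<1$, and there is a positive term of order $1/(y^2\log(1/y))$ when $c=0$.
\item As $y\to\infty$: $(\log H')''=2/y^2+O(y^{-3})$.
\end{itemize}
A subleading coefficient decides the sign only when the leading one vanishes. So your step ``the constraint must come from the next order'' is backwards: neither regime restricts $a$ at all.

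The fallback at $x\to\infty$ fails for the same reason. The large-$x$ expansion only encodes the Taylor data of the representing density at $t=0$. The density's value there, $e^a(a^2/2-ab)$, is already positive, and positivity of a density forces no semidefiniteness condition on those coefficients.

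The missing ingredient is a second completely monotone function, $F(x)/x$ (equivalently $H(y)/y$), obtained by dividing the Bernstein representation by $x$. Since $\log H=(ay-c)\log(1+1/y)$, its second derivative contains no logarithm, and
\[
\Bigl(\log\frac{H(y)}y\Bigr)''=\frac{y^2+(2\delta-a)y+\delta}{y^2(y+1)^2},\qquad \delta=1+b .
\]
Strict log-convexity at every $y>0$ gives $a<2\delta+y+\delta/y$, and the binding constraint sits at the interior point $y=\sqrt\delta$. Your quadratic $a^2-4sa+4s^2-4s$ is precisely the discriminant of the numerator as a polynomial in $y$, which is why its shape looked right. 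But it comes from a global condition at an interior point, not from an asymptotic coefficient. The log-convexity test for $F'$ keeps $\log(1+a/x)$ through $\log g'$, so it would not produce this closed form in any case.

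\textbf{The bound $a-b<K_0$.} You chose the right point $x=a/2$, and your fallback (log-convexity of $F'$) is the right test; $F'''\ge0$ at that point gives a weaker threshold. However, the hope that the resulting inequality is affine in $a-b$ is false. With $\ell=\log H$, the quantity $\ell'\ell'''-(\ell'')^2+(\ell')^2\ell''$ at $y=1/2$ equals $8M(a,c)/81$ for a cubic polynomial $M$. Only on the slice $c=0$ does it reduce to $a^2(10d-8-d^2a)$, with $d=3\log3-2$, which gives exactly $a<K_0$.

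To obtain $a+c<K_0$ for all $c\in[0,1)$, you still need three further steps:
\begin{itemize}
\item substitute $a=k-c$;
\item expand in powers of $c$;
\item show that each coefficient is nonpositive once $k\ge K_0$.
\end{itemize}
This sign check needs rational bounds such as $5/4<d<4/3$, and the same bounds give $K_0<3$ without decimals.
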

\begin{proof}
The equivalence of the two assumptions follows from \eqref{eq:equivalence-intro}, and Lemma~\ref{lem:b-range} gives $-1<b\le0$.
Put $c=-b\in[0,1)$ and normalize the argument by writing
\begin{equation}\label{eq:H-necessary}
H(y)=F_{a,b}(ay)=\left(1+\dfrac1y\right)^{ay-c}.
\end{equation}
Then $H\in\BF$.

\proofstep{Logarithmic convexity and strictness.}
Two standard consequences of the Bernstein representation are $H'\in\CM$ and $H(y)/y\in\CM$; see \cite{Berg2008,SSV2012}.
For the second assertion, divide \eqref{eq:BFrep} by $y$ and use
\[
\dfrac{1-e^{-yt}}y=\int_0^t e^{-ys}\dd s.
\]
This gives a Laplace transform of a positive measure, including the nonnegative constant term.
Tonelli's theorem justifies the interchange of integrals.

If $g>0$ is completely monotone, write $g(y)=\int_{[0,\infty)}e^{-yt}\,\nu(\mathrm{d}t)$.
The probability measure $\nu_y(\mathrm{d}t)=e^{-yt}\nu(\mathrm{d}t)/g(y)$ satisfies
\begin{equation}\label{eq:variance}
(\log g(y))''=\int_{[0,\infty)}t^2\,\nu_y(\mathrm{d}t)
-\left(\int_{[0,\infty)}t\,\nu_y(\mathrm{d}t)\right)^2\ge0.
\end{equation}
Equality at even one positive $y$ forces $\nu$ to be supported at a single point.
Thus equality is possible only for $g(y)=Ce^{-\lambda y}$, with $C>0$ and $\lambda\ge0$.
For the function in \eqref{eq:H-necessary}, expansion at infinity gives
\begin{equation}\label{eq:H-infinity}
\dfrac{H(y)}y\sim\dfrac{e^a}y,
\qquad
H'(y)=\dfrac{e^a(a/2+c)}{y^2}+O(y^{-3}).
\end{equation}
In particular, $H'$ is not identically zero; complete monotonicity then implies $H'>0$.
Neither function in \eqref{eq:H-infinity} is a single exponential.
Both logarithmic-convexity inequalities are therefore strict for every $y>0$.

\proofstep{The bound from the quotient.}
Set $\delta=1-c=1+b>0$.
Direct differentiation gives
\begin{equation}\label{eq:quotient-log}
\left(\log\dfrac{H(y)}y\right)''
=\dfrac{y^2+(2\delta-a)y+\delta}{y^2(y+1)^2}>0.
\end{equation}
After division of the numerator by $y$, this says $a<2\delta+y+\delta/y$ for every $y>0$.
Taking $y=\sqrt\delta$ proves
\begin{equation}\label{eq:sqrt-bound}
a<2(1+b)+2\sqrt{1+b}.
\end{equation}
The strictness follows from \eqref{eq:variance}, not merely from minimization.

\proofstep{The bound from the derivative.}
Let $\ell(y)=\log H(y)$ and $d=3\log3-2$.
Since $H'=H\ell'>0$, strict logarithmic convexity of $H'$ gives
\begin{equation}\label{eq:log-derivative-test}
\ell'\ell'''-(\ell'')^2+(\ell')^2\ell''>0.
\end{equation}
At $y=1/2$, direct differentiation yields
\begin{equation}\label{eq:half-derivatives}
\ell'\left(\dfrac12\right)=\dfrac{da+4c}3,
\quad
\ell''\left(\dfrac12\right)=-\dfrac89(a+4c),
\quad
\ell'''\left(\dfrac12\right)=\dfrac{16}{27}(5a+26c).
\end{equation}
The left-hand side of \eqref{eq:log-derivative-test} at $1/2$ equals $8M(a,c)/81$, where
\begin{equation}\label{eq:Mpoly}
\begin{aligned}
M(a,c)={}&2(da+4c)(5a+26c)-8(a+4c)^2\\
&-(da+4c)^2(a+4c).
\end{aligned}
\end{equation}
Consequently $M(a,c)>0$.
Write $k=a+c=a-b$.
Since $K_0=(10d-8)/d^2$, expansion of \eqref{eq:Mpoly} at $a=k-c$ gives
\begin{equation}\label{eq:Mexpanded}
\begin{aligned}
M(k-c,c)={}&k^2(10d-8-d^2k)\\
&+k\bigl[32d-8-d(d+8)k\bigr]c\\
&+\bigl[(d-4)(5d+4)k+96-42d\bigr]c^2\\
&-3(d-4)^2c^3.
\end{aligned}
\end{equation}
We claim that $k\ge K_0$ makes this expression nonpositive.
The elementary bounds
\begin{equation}\label{eq:d-bounds}
\dfrac54<d<\dfrac43
\end{equation}
follow without decimal approximations from
\[
\log3=\sum_{n=0}^{\infty}\dfrac1{4^n(2n+1)},
\qquad
\dfrac{13}{12}<\log3<\dfrac{13}{12}+\dfrac15\sum_{n=2}^{\infty}4^{-n}
=\dfrac{11}{10}.
\]
The first term in \eqref{eq:Mexpanded} is nonpositive for $k\ge K_0$.
The coefficient in square brackets multiplying $kc$ decreases with $k$, and at $k=K_0$ it equals
\begin{equation}\label{eq:linear-poly-bound}
32d-8-d(d+8)K_0=\dfrac{2(11d^2-40d+32)}d<0.
\end{equation}
Indeed, $11d^2-40d+32$ decreases on $(5/4,4/3)$ and has value $-13/16$ at $5/4$.
The coefficient of $c^2$ also decreases with $k$, since $(d-4)(5d+4)<0$.
At $k=K_0$, it equals
\begin{equation}\label{eq:quadratic-poly-bound}
(d-4)(5d+4)K_0+96-42d
=\dfrac{8(d-1)(d^2-12d-16)}{d^2}<0.
\end{equation}
Finally, the coefficient of $c^3$ is negative.
Since $c\ge0$, all terms in \eqref{eq:Mexpanded} are nonpositive when $k\ge K_0$.
This contradicts $M(a,c)>0$, so $k<K_0$.
It remains to verify $K_0<3$.
By \eqref{eq:d-bounds},
\[
3d^2-10d+8=(3d-4)(d-2)>0,
\]
which is equivalent to $(10d-8)/d^2<3$.
Together with \eqref{eq:sqrt-bound}, this proves all the assertions.
\end{proof}

For later comparison, denote the necessary upper envelope by
\begin{equation}\label{eq:U}
U(b)=\min\left\{K_0+b,\;2(1+b)+2\sqrt{1+b}\right\},
\qquad -1<b\le0.
\end{equation}
In particular, along any sequence of Bernstein parameters with $b\downarrow-1$, the first parameter must satisfy $a\to0$.
Also, \eqref{eq:necessary-domain} shows that every possible Bernstein pair lies in the domain $a-b<3$ of the regularized representation in Section~\ref{sec:density}.
This is a necessary-domain statement, not a positivity assertion throughout that domain.

\section{The regularized Laplace density}\label{sec:density}
\subsection{Normalization and the Bernstein criterion}
It is convenient to replace $b$ by $c=-b$ and work with
\begin{equation}\label{eq:H}
H_{a,c}(y)=\left(1+\dfrac1y\right)^{ay-c},
\qquad a>0,\quad 0\le c<1.
\end{equation}
Then
\begin{equation}\label{eq:scaling}
F_{a,b}(ay)=H_{a,-b}(y).
\end{equation}
Multiplying the argument by a positive constant preserves the Bernstein property, so the two parameterizations are equivalent.
Write $\kappa(c)=1$ for $c=0$ and $\kappa(c)=0$ for $0<c<1$.
The endpoint limits are
\begin{equation}\label{eq:H-endpoints}
H_{a,c}(0+)=\kappa(c),\qquad H_{a,c}(\infty)=e^a.
\end{equation}
Indeed, $y\log(1+1/y)\to0$ at the origin, whereas $y\log(1+1/y)\to1$ at infinity.
It follows that
\begin{equation}\label{eq:H-equivalence}
H_{a,c}\in\BF\quad\Longleftrightarrow\quad e^a-H_{a,c}\in\CM.
\end{equation}
For the forward implication, boundedness and monotonicity give $H_{a,c}\le e^a$; all higher derivative inequalities are identical on the two sides.
The reverse implication follows from these derivative inequalities and positivity of $H_{a,c}$.

\subsection{The regularized representation}
The one-parameter entire densities in \cite{BMP2021} motivate a density formulation of the two-parameter problem.
Here the endpoint singularity is handled by two explicit regularizing subtractions.
Define
\begin{equation}\label{eq:weight}
w_{a,c}(s)=\dfrac1\pi\left(\dfrac{s}{1-s}\right)^{as+c}
\sin\bigl(\pi(as+c)\bigr),\qquad 0<s<1,
\end{equation}
and set
\begin{equation}\label{eq:q12}
q_1=\dfrac a2+c,
\qquad
q_2=\dfrac a3+\dfrac c2+\dfrac12\left(\dfrac a2+c\right)^2.
\end{equation}
The dependence of these coefficients on $(a,c)$ is understood.

\begin{proposition}[Regularized Laplace representation]\label{prop:density}
Suppose $a>0$, $0\le c<1$ and $a+c<3$.
Define
\begin{equation}\label{eq:D}
\begin{aligned}
D_{a,c}(t)={}&q_1+(q_1-q_2)t\\
&+e^{-a}\int_0^1 w_{a,c}(s)
\bigl[e^{(1-s)t}-1-(1-s)t\bigr]\dd s,
\end{aligned}
\end{equation}
and
\begin{equation}\label{eq:Phi}
\Phi_{a,c}(t)=e^{a-t}D_{a,c}(t).
\end{equation}
Both functions are entire in $t$, and $\Phi_{a,c}\in L^1(0,\infty)$.
For $y>0$,
\begin{equation}\label{eq:Laplace}
e^a-H_{a,c}(y)=\int_0^\infty e^{-yt}\Phi_{a,c}(t)\dd t.
\end{equation}
Consequently,
\begin{equation}\label{eq:density-criterion}
H_{a,c}\in\BF\quad\Longleftrightarrow\quad D_{a,c}(t)\ge0
\ \text{for every }t\ge0.
\end{equation}
For every integer $n\ge2$,
\begin{equation}\label{eq:D-derivatives}
D^{(n)}_{a,c}(t)=e^{-a}\int_0^1(1-s)^n w_{a,c}(s)e^{(1-s)t}\dd s.
\end{equation}
\end{proposition}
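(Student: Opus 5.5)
Since $H_{a,c}(y)=\exp\bigl((ay-c)\Log(1+1/y)\bigr)$, the plan is to obtain \eqref{eq:Laplace} by complex inversion in $y$ and then read off the remaining assertions. The first step is analytic continuation. With the principal branch, $H_{a,c}$ extends holomorphically to $\C\setminus[-1,0]$. On the cut, at $y=-s\pm i0$ with $0<s<1$, one has $1+1/y=1-1/s<0$, so $\Log(1+1/y)=\log\frac{1-s}{s}\pm i\pi$. The two boundary values are therefore
\[
\Bigl(\tfrac{s}{1-s}\Bigr)^{as+c}e^{\mp i\pi(as+c)},
\]
and their jump equals $2\pi i\,w_{a,c}(s)$ up to sign; this is exactly the density \eqref{eq:weight}. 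Near $y=0$ we have $|H|\lesssim|y|^{\,c}|\log|y||^{O(1)}$, which is integrable, and near $y=-1$ the bound $|H|\lesssim|y+1|^{-(a+c)}\cdot O(1)$ shows $w_{a,c}(s)\sim(1-s)^{-(a+c)}$. Because $a+c<3$, this is integrable against $(1-s)^3$ but in general not against $1$. That is why the two subtractions in \eqref{eq:D} appear.

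The second step is a Cauchy/Stieltjes representation with subtractions. At infinity, $e^a-H_{a,c}(y)=q_1e^a/y-q_2'e^a/y^2+O(y^{-3})$, where $q_1,q_2$ are the coefficients from expanding $(ay-c)\log(1+1/y)=a-(a/2+c)/y+\dots$. I would apply Cauchy's formula on a keyhole contour around $[-1,0]$ to the remainder
\[
R(y)=e^a-H(y)-e^a\Bigl(\frac{q_1}{y+1}+\frac{q_1-q_2}{(y+1)^2}\Bigr),
\]
possibly with a different choice of expansion point so as to match \eqref{eq:D}. This remainder is $O(y^{-3})$ at infinity, so the large circle contributes nothing and the small circle around $-1$ vanishes once enough factors $(y+1)$ are present. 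The result is
\[
R(y)=-\int_0^1\frac{w(s)}{y+s}\cdot(\text{subtraction terms})\,\dd s .
\]
Using $1/(y+1)=\int_0^\infty e^{-(y+1)t}\dd t$ and $1/(y+s)=\int_0^\infty e^{-(y+s)t}\dd t$, the whole expression becomes a Laplace transform. Its kernel is $e^{-t}$ times $q_1+(q_1-q_2)t+e^{-a}\int w(s)\bigl[e^{(1-s)t}-1-(1-s)t\bigr]\dd s$ times $e^a$, which is precisely $\Phi_{a,c}$. The bracket equals $\sum_{n\ge2}((1-s)t)^n/n!$, which is $O((1-s)^2t^2e^{(1-s)t})$. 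Together with $w=O((1-s)^{-(a+c)})$ this gives absolute convergence only when $a+c<3$ after tracking powers; otherwise one needs Fubini on $\int_0^\infty\int_0^1$. The main obstacle is this bookkeeping: matching the subtraction constants with $q_1,q_2$, and justifying the vanishing of the contour pieces at $-1$ and $0$ uniformly in $a+c<3$.

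The third step covers entireness, integrability and the derivatives. Entireness of $D$ follows because the integrand is entire in $t$ and dominated locally uniformly by $(1-s)^2w(s)e^{|t|}$. Differentiating twice kills the polynomial part, and differentiation under the integral yields \eqref{eq:D-derivatives}. For $\Phi\in L^1$, write $\Phi=e^{a}\bigl(e^{-t}(\text{poly})+e^{-a}\int w\,[e^{-st}-e^{-t}-(1-s)te^{-t}]\dd s\bigr)$. Then integrate in $t$ first, using
\[
\int_0^\infty\bigl|e^{-st}-e^{-t}(1+(1-s)t)\bigr|\dd t\lesssim(1-s)^2/s .
\]
Near $s=0$ this is controlled by $w(s)\sim s^{c}\cdot s^{as}$, which is bounded times $s^c/\pi\cdot\sin(\pi c)+O(s)$, so $w(s)/s$ is integrable when $c>0$ or when $\sin$ vanishes at $c=0$.

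Finally, the criterion \eqref{eq:density-criterion} follows from \eqref{eq:H-equivalence}, uniqueness of the Laplace transform and Bernstein's theorem. The representing measure of $e^a-H$ is $\Phi\dd t$, and it is nonnegative exactly when $D\ge0$ on $[0,\infty)$, since $D$ is continuous.
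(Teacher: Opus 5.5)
Your overall route is the paper's: continuation across $[-1,0]$, a subtracted Cauchy formula, exponential kernels and Fubini, then Bernstein's theorem with uniqueness. Your treatment of entireness, the derivative formula, integrability of $\Phi_{a,c}$ and the final criterion is fine. The gap is the central contour step. You apply Cauchy's formula to $R$ itself and defer the endpoint analysis as ``the main obstacle'', but for $R$ that step actually fails. The rational terms subtracted in $R$ have no jump, so $R$ jumps across the slit exactly as $-H_{a,c}$ does. The bank contribution is therefore a multiple of $\int_0^1 w_{a,c}(s)(y+s)^{-1}\dd s$. Since $w_{a,c}$ is not integrable at $s=1$ once $a+c>1$ (it has order $(1-s)^{-(a+c)}$, or $(1-s)^{-1}$ when $a+c=2$), this integral diverges throughout $1<a+c<3$. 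Correspondingly, $|H_{a,c}|$ is of size $\varepsilon^{-(a+c)}$ on $|z+1|=\varepsilon$, so the best bound for the circle about $-1$ is $O(\varepsilon^{1-a-c})$, which is not $o(1)$. Subtracting terms with poles at $-1$ does not weaken the branch singularity there, the $O(y^{-3})$ decay at infinity is irrelevant to it, and changing the expansion point does not help. Your remark about ``enough factors $(y+1)$'' points the right way, but the plan never introduces them.

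The missing idea is to multiply by $(z+1)^2$ first, that is, to apply Cauchy's formula to
\[
Q(z)=(z+1)^2R(z)=(z+1)^2\bigl(e^a-H_{a,c}(z)\bigr)-e^a(q_1z+2q_1-q_2).
\]
This function has four properties that make the argument work:
\begin{itemize}
\item it is only $O(z^{-1})$ at infinity, which suffices for the outer circle;
\item it is $O(1+|z+1|^{2-a-c})$ near $-1$, so that circle contributes $O(\varepsilon)+O(\varepsilon^{3-a-c})\to0$, precisely because $a+c<3$;
\item it is bounded near $0$;
\item its jump is $-2\pi i(1-s)^2w_{a,c}(s)$, integrable again because $a+c<3$ (the relevant weight is $(1-s)^2$, not $(1-s)^3$).
\end{itemize}
Dividing the resulting Stieltjes formula by $(y+1)^2$ gives
\[
R(y)=\int_0^1\frac{(1-s)^2w_{a,c}(s)}{(y+s)(y+1)^2}\dd s
=\int_0^1w_{a,c}(s)\Bigl[\frac1{y+s}-\frac1{y+1}-\frac{1-s}{(y+1)^2}\Bigr]\dd s.
\]
This is the precise content of your unspecified ``subtraction terms''.

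In the Laplace step the bracket must be inverted as a whole, via $\int_0^\infty e^{-(y+1)t}\bigl[e^{(1-s)t}-1-(1-s)t\bigr]\dd t=(1-s)^2(y+s)^{-1}(y+1)^{-2}$. Fubini is then justified because the bracket is nonnegative, so absolute integrability reduces to $\int_0^1|w_{a,c}(s)|(1-s)^2(y+s)^{-1}\dd s<\infty$. Inverting the three fractions separately, as your wording suggests, again produces divergent $s$-integrals.

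Finally, your branch computation has the sign reversed. The map $z\mapsto1+1/z$ sends the upper half-plane into the lower one. So on the upper bank $\Log(1+1/z)=\log\frac{1-s}{s}-i\pi$, and $H_{a,c}(-s+i0)-H_{a,c}(-s-i0)=2\pi i\,w_{a,c}(s)$. With the slit traversed clockwise this gives the plus sign above, not the minus sign in your formula for $R$. Since the sign of the integral term in $D_{a,c}$ is exactly what the positivity criterion tests, it cannot be left ``up to sign''.
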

\begin{proof}
Extend $H_{a,c}$ analytically to $\C\setminus[-1,0]$ by using the principal logarithm in \eqref{eq:H}.
On the upper bank of the cut,
\[
\Log\left(1+\dfrac1{-s+i0}\right)=\log\left(\dfrac{1-s}s\right)-i\pi.
\]
Therefore
\begin{equation}\label{eq:cut}
\begin{aligned}
H_{a,c}(-s+i0)&=\left(\dfrac{s}{1-s}\right)^{as+c}e^{i\pi(as+c)},\\
\operatorname{Im}H_{a,c}(-s+i0)&=\pi w_{a,c}(s).
\end{aligned}
\end{equation}
Put $P(z)=e^a-H_{a,c}(z)$.
Expansion at infinity gives
\[
P(z)=e^a\left(\dfrac{q_1}z-\dfrac{q_2}{z^2}+O(z^{-3})\right).
\]
Hence
\[
Q(z)=(z+1)^2P(z)-e^a[q_1z+2q_1-q_2]
\]
is $O(z^{-1})$ at infinity.

We spell out the contour step, since the regularization at the two endpoints is essential.
Fix $y>0$, choose $R>y+2$, and let $\Gamma_{R,\varepsilon}$ be the positively oriented boundary of the disk $|z|<R$ cut along $[-1,0]$, with the endpoint disks $|z+1|<\varepsilon$ and $|z|<\varepsilon$ removed.
Thus $\Gamma_{R,\varepsilon}$ consists of the outer circle, the two small endpoint circles, and the upper and lower banks of the segment $[-1+\varepsilon,-\varepsilon]$.
Cauchy's formula gives
\[
Q(y)=\dfrac{1}{2\pi i}\int_{\Gamma_{R,\varepsilon}}\dfrac{Q(z)}{z-y}\dd z.
\]
Since $Q(z)=O(z^{-1})$, the integrand on $|z|=R$ is $O(R^{-2})$ and the outer-circle contribution is $O(R^{-1})$, hence tends to zero as $R\to\infty$.

Near $z=-1$,
\[
H_{a,c}(z)=O(|z+1|^{-a-c}),
\qquad Q(z)=O(1+|z+1|^{2-a-c}).
\]
Because $y>0$, the factor $(z-y)^{-1}$ is bounded on the small circle about $-1$; its contribution to Cauchy's formula is therefore
\[
O(\varepsilon)+O(\varepsilon^{3-a-c})\longrightarrow0
\]
under the hypothesis $a+c<3$.
At $z=0$, $H_{a,c}$ and hence $Q$ are bounded for $c\ge0$, so the small-circle contribution is $O(\varepsilon)$ and again vanishes.

Write $Q_\pm(-s)$ for the boundary values on the upper and lower banks.
For real $a,c$, the lower boundary value is the complex conjugate of the upper one, so \eqref{eq:cut} gives
\[
H_{a,c}(-s+i0)-H_{a,c}(-s-i0)=2\pi i\,w_{a,c}(s).
\]
Since the polynomial subtracted in the definition of $Q$ has no jump,
\[
\begin{aligned}
Q_+(-s)-Q_-(-s)
&=-(1-s)^2\bigl(H_{a,c}(-s+i0)-H_{a,c}(-s-i0)\bigr)\\
&=-2\pi i(1-s)^2w_{a,c}(s).
\end{aligned}
\]
The slit is an inner boundary, hence it is traversed clockwise: the upper bank runs from $-1$ to $0$ and the lower bank from $0$ to $-1$.
Consequently the two bank integrals combine as
\[
\dfrac{1}{2\pi i}\int_{\varepsilon}^{1-\varepsilon}
\dfrac{Q_-(-s)-Q_+(-s)}{y+s}\dd s.
\]
Letting first $R\to\infty$ and then $\varepsilon\downarrow0$, and using the jump identity above, yields
\[
Q(y)=\int_0^1\dfrac{(1-s)^2w_{a,c}(s)}{y+s}\dd s.
\]
Consequently,
\begin{equation}\label{eq:regularized-stieltjes}
\begin{aligned}
P(y)={}&e^a\left[\dfrac{q_1}{y+1}+\dfrac{q_1-q_2}{(y+1)^2}\right]\\
&+\int_0^1\dfrac{(1-s)^2w_{a,c}(s)}{(y+s)(y+1)^2}\dd s.
\end{aligned}
\end{equation}
The exponential remainder in \eqref{eq:D} is $O((1-s)^2)$, locally uniformly for complex $t$.
Since $w_{a,c}(s)=O((1-s)^{-a-c})$ as $s\uparrow1$, the condition $a+c<3$ guarantees convergence at that endpoint and local uniformity in $t$.
At $s=0$, the weight is $O(s^c)$ for $c>0$ and $O(s)$ for $c=0$.
This proves the asserted entire dependence and justifies \eqref{eq:D-derivatives}.

For real $t\ge0$, the remainder is nonnegative, and
\begin{equation}\label{eq:remainder-mass}
\int_0^\infty e^{-t}\bigl[e^{(1-s)t}-1-(1-s)t\bigr]\dd t
=\dfrac{(1-s)^2}s.
\end{equation}
The integral of $|w_{a,c}(s)|(1-s)^2/s$ over $(0,1)$ is finite.
Its behavior at zero is $O(s^{c-1})$ when $c>0$, and it is bounded when $c=0$.
Its behavior at one is $O((1-s)^{2-a-c})$.
Tonelli's theorem applied to the absolute value therefore proves $\Phi_{a,c}\in L^1$.
Finally,
\[
\begin{aligned}
&\int_0^\infty e^{-(y+1)t}\bigl[e^{(1-s)t}-1-(1-s)t\bigr]\dd t\\
&\hspace{1em}=\dfrac1{y+s}-\dfrac1{y+1}-\dfrac{1-s}{(y+1)^2}
=\dfrac{(1-s)^2}{(y+s)(y+1)^2}.
\end{aligned}
\]
Fubini's theorem and \eqref{eq:regularized-stieltjes} prove \eqref{eq:Laplace}.
Dominated convergence at $y=0$ gives
\begin{equation}\label{eq:Phi-mass}
\int_0^\infty\Phi_{a,c}(t)\dd t=e^a-\kappa(c).
\end{equation}
If $D_{a,c}\ge0$, subtracting \eqref{eq:Laplace} from \eqref{eq:Phi-mass} gives a Bernstein representation.
Conversely, \eqref{eq:H-equivalence} and uniqueness of Laplace transforms identify the signed density in \eqref{eq:Laplace} with a positive finite measure; see \cite{Widder1941,SSV2012}.
Continuity then gives pointwise nonnegativity.
\end{proof}

\begin{remark}[Coverage of all possible Bernstein parameters]\label{rem:coverage}
By Proposition~\ref{prop:necessary} and \eqref{eq:scaling}, $H_{a,c}\in\BF$ entails $a+c<K_0<3$.
Thus the hypothesis $a+c<3$ in Proposition~\ref{prop:density} excludes no possible Bernstein function in this family.
For $a+c\ge3$, the Bernstein property is already ruled out by a necessary parameter bound, independently of the contour argument.
Conversely, $a+c<3$ alone guarantees only a signed Laplace representation, not positivity of its density.
\end{remark}

\subsection{Coefficient identities}
For $k\ge1$, put
\begin{equation}\label{eq:ellk}
\ell_k=\dfrac a{k+1}+\dfrac ck.
\end{equation}
Define $q_0(a,c)=1$ and, for $n\ge1$,
\begin{equation}\label{eq:recurrence}
q_n(a,c)=\dfrac1n\sum_{k=1}^n\left(\dfrac{ak}{k+1}+c\right)q_{n-k}(a,c).
\end{equation}
Indeed, the generating function
\[
\exp\left(\sum_{k=1}^\infty\ell_kz^k\right)
=\sum_{n=0}^\infty q_n(a,c)z^n,\qquad |z|<1,
\]
gives \eqref{eq:recurrence} upon differentiation.
The first four coefficients are
\begin{equation}\label{eq:q1234}
\begin{aligned}
q_1&=\ell_1,&q_2&=\ell_2+\dfrac{\ell_1^2}2,\\
q_3&=\ell_3+\ell_1\ell_2+\dfrac{\ell_1^3}6,&
q_4&=\ell_4+\ell_1\ell_3+\dfrac{\ell_2^2}2
+\dfrac{\ell_1^2\ell_2}2+\dfrac{\ell_1^4}{24}.
\end{aligned}
\end{equation}
Expansion of $H_{a,c}$ at infinity yields
\[
e^{-a}(e^a-H_{a,c}(y))
=\sum_{n=1}^\infty(-1)^{n+1}q_n(a,c)y^{-n},\qquad |y|>1.
\]
Comparison with the Laplace expansion of \eqref{eq:Laplace} gives
\begin{equation}\label{eq:D0}
D^{(n)}_{a,c}(0)=\sum_{k=0}^n(-1)^k\binom nk q_{k+1}(a,c),
\qquad n\ge0.
\end{equation}
For completeness, expand $\Phi_{a,c}$ at zero in its Laplace integral.
The integral away from zero is exponentially small as $y\to\infty$, and the integral of each Taylor monomial is $n!/y^{n+1}$.
Since $\Phi_{a,c}=e^{a-t}D_{a,c}$, the binomial relation \eqref{eq:D0} follows.
These identities extend the coefficient description of the one-parameter density in \cite{BMP2021}; related Bell-polynomial formulas for derivatives of $h_\alpha$ are given in \cite{CGDQ2023}.

\subsection{Uniform behavior at large arguments}
\begin{lemma}[Uniform positive tails]\label{lem:tails}
Fix $c\in[0,1)$ and a compact interval $I\subset(0,3-c)$.
Uniformly for $a\in I$, as $t\to\infty$,
\begin{equation}\label{eq:tails}
\Phi_{a,c}(t)=
\begin{cases}
\dfrac a{t^2}+O\left(\dfrac{\log t}{t^3}\right),&c=0,\\[9pt]
\dfrac{\Gamma(c+1)\sin(\pi c)}{\pi t^{c+1}}
+O\left(\dfrac{\log t}{t^{c+2}}\right),&0<c<1.
\end{cases}
\end{equation}
For $c>0$, the second expansion is also uniform when $(a,c)$ ranges over an arbitrary compact subset of
\begin{equation}\label{eq:interior-domain}
\{(a,c):a>0,\ 0<c<1,\ a+c<3\}.
\end{equation}
More quantitatively, if $I\subset(0,3)$ is compact, there exist constants $C_{I,0}>0$ and $T_{I,0}\ge2$ such that
\begin{equation}\label{eq:tail-bound-c0}
\left|\Phi_{a,0}(t)-\dfrac a{t^2}\right|
\le C_{I,0}\dfrac{1+\log t}{t^3},
\qquad a\in I,\quad t\ge T_{I,0}.
\end{equation}
If $K$ is a compact subset of \eqref{eq:interior-domain}, then there exist constants $C_K>0$ and $T_K\ge2$ such that
\begin{equation}\label{eq:tail-bound-K}
\left|\Phi_{a,c}(t)-\dfrac{\Gamma(c+1)\sin(\pi c)}{\pi t^{c+1}}\right|
\le C_K\dfrac{1+\log t}{t^{c+2}},
\qquad (a,c)\in K,\quad t\ge T_K.
\end{equation}
The constants need not be evaluated numerically for the qualitative arguments below.
In either form, the densities are positive for all sufficiently large $t$, uniformly on the indicated parameter sets.
No joint uniformity at $c=0$ is asserted.
\end{lemma}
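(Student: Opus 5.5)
Since $\Phi_{a,c}$ is the inverse Laplace transform of $e^a-H_{a,c}$, I will read off the tail of $\Phi_{a,c}$ from the behavior of $H_{a,c}$ near $y=0$, which is where the singularity producing the power law sits. I will not run a Tauberian argument. Instead I will work directly with the representation \eqref{eq:D}, $\Phi_{a,c}(t)=e^{a-t}D_{a,c}(t)$. The linear part $e^{a-t}[q_1+(q_1-q_2)t]$ decays exponentially. Expanding the bracket, the terms $-e^{-t}(1+(1-s)t)$ also decay exponentially after integration against $w_{a,c}$, uniformly on compacts, because $\int_0^1|w_{a,c}|(1-s)^{j}\dd s$ is finite only for $j\ge2$. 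So I will first keep the regularization: the $s$-integral of $w(1-s)^2\cdot\frac{e^{-st}-e^{-t}-(1-s)te^{-t}}{(1-s)^2}$ splits as an integral of $w(s)e^{-st}$ over $s\in(0,1/2)$ plus a remainder over $(1/2,1)$. On $(1/2,1)$ the full bracket is bounded by $C(1-s)^2t^2e^{-t/2}$, which is integrable against $|w|$ since $a+c<3$, so that piece is $O(t^2e^{-t/2})$. After this, everything reduces to
\[
\Phi_{a,c}(t)=\int_0^{1/2} w_{a,c}(s)e^{-st}\dd s+O(t^2e^{-t/2}),
\]
with constants uniform on compacts.

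The core step is a Watson-lemma expansion of this integral at $s=0$. I will write
\[
w_{a,c}(s)=\frac1\pi\exp\bigl((as+c)(\log s-\log(1-s))\bigr)\sin(\pi(as+c)).
\]
For $0<c<1$ this gives $w_{a,c}(s)=\frac{\sin\pi c}{\pi}s^c\bigl(1+O(s\log(1/s))\bigr)$, where the $s\log s$ comes from $as\log s$, plus $O(s)$ corrections from the sine and from $\log(1-s)$. Integrating $s^ce^{-st}$ gives $\Gamma(c+1)t^{-c-1}$. The error term $s^{c+1}\log(1/s)$ contributes $O(t^{-c-2}\log t)$, via $\int_0^\infty s^{c+1}\log(1/s)e^{-st}\dd s=t^{-c-2}(\Gamma'$-type constant$+\Gamma(c+2)\log t)$. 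Truncating at $1/2$ costs only $e^{-t/2}$.

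For $c=0$ the leading factor $\sin(\pi c)$ vanishes, and I will expand to the next order: $w_{a,0}(s)=\frac1\pi s^{as}(1-s)^{-as}\sin(\pi as)=as\bigl(1+O(s\log(1/s))\bigr)$. This gives $a\,t^{-2}+O(t^{-3}\log t)$.

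Uniformity is what needs care, and I expect it to be the main obstacle. I will bound the remainder $|w_{a,c}(s)-\frac{\sin\pi c}{\pi}s^c|$ (respectively $|w_{a,0}(s)-as|$) by $C s^{c+1}(1+\log(1/s))$ on $(0,1/2]$. The constant $C$ must depend only on upper bounds for $a$ and $c$, since $|(as)\log s|\le a/e$ keeps the exponential factor bounded and the Taylor remainders of $\exp$ and $\sin$ are controlled. This yields \eqref{eq:tail-bound-c0} and \eqref{eq:tail-bound-K} with explicit $T$ large enough to absorb the exponentially small pieces; the bound on the $(1/2,1)$ piece needs $\sup_K(a+c)<3$, which holds by compactness. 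Positivity for large $t$ then follows because the leading coefficients $a\ge\min I>0$ and $\Gamma(c+1)\sin(\pi c)/\pi$ are bounded below on the respective compacts. There is no joint uniformity at $c=0$ because $\sin(\pi c)\to0$ while the $O(t^{-c-2}\log t)$ term does not shrink correspondingly.
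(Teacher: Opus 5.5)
Your proposal is correct and is essentially the paper's proof: split the $s$-integral (the paper at a general $\eta$, you at $1/2$), show that everything except $\int_0^{\eta}e^{-st}w_{a,c}(s)\dd s$ is exponentially small uniformly via $|e^x-1-x|\le\frac12x^2e^x$ and $\sup(a+c)<3$, then bound $w_{a,c}(s)-\frac{\sin(\pi c)}{\pi}s^c$ (respectively $w_{a,0}(s)-as$) by $O(s^{c+1}(1+|\log s|))$ uniformly and integrate against $e^{-st}$. The only quibble is your closing sentence, which reads as a claim that joint uniformity at $c=0$ fails; the lemma only declines to assert it, and your argument neither needs nor proves such a failure.
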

\begin{proof}
Fix $\eta\in(0,1)$ and split the integral in \eqref{eq:D} at $s=\eta$.
On $[\eta,1)$, the estimate
\[
|e^{(1-s)t}-1-(1-s)t|\le\dfrac12(1-s)^2t^2e^{(1-s)t}
\]
shows that its contribution to $\Phi_{a,c}(t)$ is $O(t^2e^{-\eta t})$, uniformly on each of the parameter sets under consideration.
The polynomial term in \eqref{eq:D}, and the subtraction terms over $(0,\eta)$, are $O((1+t)e^{-t})$ uniformly on the same sets.
Thus the only nonexponentially small contribution is
\[
\int_0^\eta e^{-st}w_{a,c}(s)\dd s.
\]

For $c=0$, the identities
\[
\left(\dfrac{s}{1-s}\right)^{as}=1+O_I(s|\log s|),
\qquad
\sin(\pi as)=\pi as+O_I(s^3)
\]
show that
\[
w_{a,0}(s)=as+O_I(s^2(1+|\log s|))
\qquad(s\downarrow0)
\]
uniformly for $a\in I$.
Choose $\eta$ small enough and define
\[
R_{I,0}:=
\sup_{\substack{a\in I\\0<s\le\eta}}
\dfrac{|w_{a,0}(s)-as|}{s^2(1+|\log s|)}<\infty.
\]
Then, for $t\ge2$,
\[
\begin{aligned}
\left|\int_0^\eta e^{-st}\bigl(w_{a,0}(s)-as\bigr)\dd s\right|
&\le \dfrac{R_{I,0}}{t^3}
\int_0^{\eta t}e^{-u}u^2\bigl(1+|\log u-\log t|\bigr)\dd u\\
&\le \dfrac{R_{I,0}(A_0+B_0\log t)}{t^3},
\end{aligned}
\]
where
\[
A_0:=\int_0^\infty e^{-u}u^2(1+|\log u|)\dd u,
\qquad B_0:=\Gamma(3).
\]
Replacing the truncated integral of $as$ by $a\int_0^\infty e^{-st}s\dd s=a/t^2$ introduces only an exponentially small error.
Enlarging the constant to absorb all exponentially small terms gives \eqref{eq:tail-bound-c0}.

Now let $K$ be compact in \eqref{eq:interior-domain}, and put
\[
\lambda(c):=\dfrac{\sin(\pi c)}\pi.
\]
Since $K$ stays a positive distance from $c=0$, $c=1$, and $a+c=3$, the elementary expansions
\[
\begin{aligned}
\left(\dfrac{s}{1-s}\right)^{as+c}
&=s^c\bigl[1+O_K(s(1+|\log s|))\bigr],\\
\sin(\pi(as+c))&=\sin(\pi c)+O_K(s).
\end{aligned}
\]
give
\[
w_{a,c}(s)=\lambda(c)s^c+O_K(s^{c+1}(1+|\log s|))
\qquad(s\downarrow0)
\]
uniformly on $K$.
Hence one may choose the same $\eta\in(0,1)$ for all $(a,c)\in K$ and define
\begin{equation}\label{eq:RK}
R_K:=
\sup_{\substack{(a,c)\in K\\0<s\le\eta}}
\dfrac{|w_{a,c}(s)-\lambda(c)s^c|}
{s^{c+1}(1+|\log s|)}<\infty.
\end{equation}
Let $[c_-,c_+]$ be the projection of $K$ onto the $c$-axis and set
\[
A_K:=\sup_{c\in[c_-,c_+]}
\int_0^\infty e^{-u}u^{c+1}(1+|\log u|)\dd u,
\qquad
B_K:=\sup_{c\in[c_-,c_+]}\Gamma(c+2).
\]
Both constants are finite.
With $u=st$, \eqref{eq:RK} gives, uniformly on $K$,
\begin{equation}\label{eq:local-tail-error}
\left|\int_0^\eta e^{-st}\bigl(w_{a,c}(s)-\lambda(c)s^c\bigr)\dd s\right|
\le R_K\dfrac{A_K+B_K\log t}{t^{c+2}}.
\end{equation}
The difference between
$\lambda(c)\int_0^\eta e^{-st}s^c\dd s$
and
$\lambda(c)\Gamma(c+1)t^{-c-1}$
is exponentially small uniformly for $c\in[c_-,c_+]$; the same is true of the already separated terms from $[\eta,1)$ and of the polynomial/subtraction terms.
Since an exponential is dominated uniformly by any fixed inverse power for $t\ge2$, these terms can be absorbed by increasing a single constant $C_K$.
This proves \eqref{eq:tail-bound-K}, and hence \eqref{eq:tails}.

Finally, the leading coefficients are uniformly bounded away from zero on the relevant compact parameter sets.
For example, with
\[
L_K:=\min_{c\in[c_-,c_+]}
\dfrac{\Gamma(c+1)\sin(\pi c)}\pi>0,
\]
any $t\ge T_K$ satisfying $C_K(1+\log t)/t\le L_K/2$ obeys
\[
\Phi_{a,c}(t)\ge\dfrac{L_K}{2t^{c+1}}>0
\qquad((a,c)\in K).
\]
The case $c=0$ is identical with $L_I:=\min I>0$ and \eqref{eq:tail-bound-c0}.
This proves the stated uniform positivity of the tails.
\end{proof}

\section{The one-parameter threshold and positivity transfer}\label{sec:transfer}
\subsection{An exact localization of the one-parameter threshold}
Define
\begin{equation}\label{eq:astar}
h_\alpha(y)=\left(1+\dfrac1y\right)^{\alpha y},
\qquad
\astar=\sup\{\alpha>0:h_\alpha\in\BF\}.
\end{equation}
The parameter-interval description is discussed in \cite{Berg2005,BMP2021,CGDQ2023}.
For the transfer proof we need an exact localization of the endpoint, so a self-contained argument is included.
Every theorem below uses the exact constant in \eqref{eq:astar}, not the decimal approximation \eqref{eq:astar-decimal-intro}.

\begin{proposition}[Exact one-parameter threshold]\label{prop:threshold}
The constant in \eqref{eq:astar} is well defined, satisfies
\begin{equation}\label{eq:astar-localization}
2<\astar<\dfrac73,
\end{equation}
and has the characterization
\begin{equation}\label{eq:alpha-interval}
h_\alpha\in\BF\quad\Longleftrightarrow\quad0<\alpha\le\astar.
\end{equation}
\end{proposition}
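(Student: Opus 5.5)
The plan has four parts: the interval structure, which gives well-definedness and \eqref{eq:alpha-interval}; a finite computation at $\alpha=7/3$, which gives the upper bound; strict positivity of the density at $\alpha=2$; and a perturbation argument that pushes the threshold strictly past $2$. Throughout I use $h_\alpha=H_{\alpha,0}$. By \eqref{eq:density-criterion}, $h_\alpha\in\BF$ is then equivalent to $D_{\alpha,0}\ge0$ on $[0,\infty)$ whenever $\alpha<3$.

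\emph{Interval structure.} The admissible set $S=\{\alpha>0:h_\alpha\in\BF\}$ is nonempty, since Alzer and Berg \cite{AlzerBerg2002} proved $P_{a,0}\in\CM$ for $0<a\le1$. It is bounded, because Proposition~\ref{prop:necessary} with $b=0$ gives $\alpha<K_0<3$. Hence $\astar=\sup S$ is well defined and finite.

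If $\alpha\in S$ and $0<\beta\le1$, then $h_{\alpha\beta}=h_\alpha^{\beta}$. A positive power of order at most one of a Bernstein function is Bernstein \cite{SSV2012}, so $\alpha\beta\in S$ and $S$ is an initial segment of $(0,\infty)$.

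For closedness, take $\alpha_n\uparrow\astar$ with $\alpha_n\in S$. Since $\astar<3$, the map $\alpha\mapsto D_{\alpha,0}(t)$ is continuous for each fixed $t$ by dominated convergence in \eqref{eq:D}. Therefore $D_{\astar,0}\ge0$, and $\astar\in S$. This proves \eqref{eq:alpha-interval}. It also shows that exhibiting one $\alpha\notin S$ gives a strict upper bound for $\astar$, and exhibiting one $\alpha>2$ in $S$ gives a strict lower bound.

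\emph{Upper bound $\astar<7/3$.} It suffices to show $7/3\notin S$. The first thing I would try is a moment-type necessary condition. Set $m_n=\int_0^\infty t^n e^{-t}D(t)\dd t$. From \eqref{eq:Laplace} and the expansion at infinity one gets $m_n=n!\,q_{n+1}(a,0)$. If $D\ge0$, these form a Stieltjes moment sequence, so every Hankel matrix $(m_{i+j})$ and $(m_{i+j+1})$ is positive semidefinite.

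At $a=7/3$ the $q_n$ are rational and can be computed exactly from the recurrence \eqref{eq:recurrence}. I would evaluate small Hankel determinants until one is negative. This is exact rational arithmetic, so no enclosure of $\log 3$ or similar constants is needed.

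If low-order determinants do not detect the failure, the fallback is to exhibit $t_0$ with $D_{7/3,0}(t_0)<0$. For this I would use the Taylor expansion \eqref{eq:D0} together with an explicit remainder bound obtained from \eqref{eq:D-derivatives}, namely $|D^{(n)}(t)|\le e^{-a}e^{t}\int_0^1|w|\dd s$.

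\emph{Lower bound $\astar>2$.} This is the main obstacle. The route is to prove that $D_{2,0}(t)>0$ for every $t\ge0$ and then perturb.

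For the perturbation, fix the compact interval $I=[2,2.1]$ and use Lemma~\ref{lem:tails}. It gives $T$ such that $\Phi_{\alpha,0}(t)>0$ for all $t\ge T$ and $\alpha\in I$. On the compact set $[0,T]$, the function $D_{\alpha,0}(t)$ is jointly continuous in $(\alpha,t)$. So strict positivity at $\alpha=2$ gives a uniform margin, which persists for $\alpha\in[2,2+\varepsilon]$. Hence $2+\varepsilon\in S$.

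Strict positivity of $D_{2,0}$ is the hard part, because the weight $w_{2,0}(s)=\pi^{-1}(s/(1-s))^{2s}\sin(2\pi s)$ changes sign at $s=1/2$. As a result, \eqref{eq:D-derivatives} gives no sign on $D''$ for free. I would handle it as follows.
\begin{itemize}
\item \emph{Large $t$.} Use \eqref{eq:tail-bound-c0} with explicit constants $R_{I,0}$, $A_0$, $B_0$ to fix a concrete $T$ beyond which $\Phi_{2,0}(t)>0$.
\item \emph{Moderate $t$.} On $[0,T]$, cover by finitely many subintervals. On each, expand $D_{2,0}$ in a Taylor polynomial with exact rational coefficients from \eqref{eq:D0} and \eqref{eq:recurrence}. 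Bound the remainder via \eqref{eq:D-derivatives}, using $\int_0^1(1-s)^n|w|e^{(1-s)t}\dd s\le e^{t}\int_0^1|w|\dd s$.
\end{itemize}
The result is a finite rational certificate. Its feasibility depends on how close $D_{2,0}$ comes to zero. Since $2$ lies well below the numerical value $\approx 2.2997$, I expect a comfortable margin. If needed, I would first split the integral at $s=1/2$, so that the positive part dominates $D''$ for $t$ beyond a moderate size; this would shrink the interval requiring the Taylor certificate.
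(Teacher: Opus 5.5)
Your skeleton is sound and matches the paper: power closure plus closedness gives $S=(0,\astar]$, and the tail-uniform perturbation around $\alpha=2$ via Lemma~\ref{lem:tails} is exactly the paper's step. The problem is that neither strict inequality in \eqref{eq:astar-localization} is actually established.

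\emph{The upper bound.} Your primary test rests on a false identity. Expanding \eqref{eq:Laplace} at $y=\infty$ gives derivatives at $t=0$, not moments: the $n$th derivative of $e^{-t}D(t)$ at $t=0$ equals $(-1)^nq_{n+1}$, which is \eqref{eq:Phi-series}. Already for $n=0$, \eqref{eq:Phi-mass} gives $\int_0^\infty e^{-t}D\,\dd t=1-e^{-a}\neq a/2=q_1$. For $n\ge1$ the moments diverge, since $\Phi_{a,0}(t)\sim a/t^2$. Taylor coefficients of a nonnegative function satisfy no Hankel positivity, so a negative determinant would prove nothing.

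Your fallback is the right kind of argument, but the remainder bound as written is infinite. Near $s=1$, $|w_{7/3,0}(s)|$ is of order $(1-s)^{-7/3}$, and $|w_{2,0}(s)|$ is of order $(1-s)^{-1}$, so $\int_0^1|w|\,\dd s=\infty$. You must keep the factor $(1-s)^n$. For example, $|w_{a,0}(s)|\le\pi^{-1}(1-s)^{-a}$ gives $|D^{(n)}(t)|\le e^{t-a}/(\pi(n+1-a))$ for $n>a-1$. Even then you still have to choose a point and carry out the computation.

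The paper avoids remainder integrals altogether. For $\alpha\ge2$ the recurrence shows that the $q_n$ are positive, nondecreasing and satisfy $q_{n+1}/q_n\le\alpha$. So at $\alpha=7/3$, $t=5$, the series \eqref{eq:Phi-series} alternates with decreasing terms from $n=11$ on. The exact even partial sum $S_{20}<-1/6000$ is therefore already an upper bound for $e^{-7/3}\Phi_{7/3,0}(5)$.

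\emph{The lower bound.} For $\astar>2$ you are missing the key idea, the reflection identity
\[
w_{2,0}(1-s)=-\left(\dfrac{1-s}{s}\right)^2w_{2,0}(s).
\]
This is the exact form of the sign change you noticed at $s=1/2$. Pairing $s$ with $1-s$ in \eqref{eq:D-derivatives} and using $2\sinh v\ge2v$ gives
\[
D_{2,0}''(t)=e^{-2}\int_0^{1/2}(1-s)^2w_{2,0}(s)\bigl[e^{(1-s)t}-e^{st}\bigr]\dd s\ge te^{t/2}D_{2,0}'''(0)=\dfrac{te^{t/2}}{360}.
\]
Integrating twice from $D(0)=1$ and $D'(0)=-1/6$ yields the closed-form minorant $G(t/2)$ of \eqref{eq:G}, which is positive for all $t\ge0$. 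No tail constants and no covering of $[0,T]$ are needed.

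Your computer-assisted route is plausible in principle; the margin is large, since that minorant never drops below about $0.39$. As stated, however, it is not a proof, for three reasons:
\begin{enumerate}
\item The constants in Lemma~\ref{lem:tails} are only qualitative and would have to be rigorously evaluated.
\item \eqref{eq:D0} gives rational Taylor coefficients only at the center $t=0$. Your subinterval expansions would really be a single expansion about $0$, required to work on all of $[0,T]$.
\item Its remainder bound suffers from the same divergence of $\int_0^1|w|\,\dd s$.
\end{enumerate}
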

\begin{proof}
\proofstep{Positivity at $\alpha=2$.}
Write $D=D_{2,0}$ and $w=w_{2,0}$.
From \eqref{eq:D0},
\[
D(0)=1,\qquad D'(0)=-\dfrac16,\qquad D''(0)=0,\qquad
D'''(0)=\dfrac1{360}.
\]
The weight satisfies
\[
w(1-s)=-\left(\dfrac{1-s}s\right)^2w(s),
\qquad w(s)>0\quad(0<s<1/2).
\]
Pairing $s$ and $1-s$ in \eqref{eq:D-derivatives} gives
\[
D''(t)=e^{-2}\int_0^{1/2}(1-s)^2w(s)
\bigl[e^{(1-s)t}-e^{st}\bigr]\dd s.
\]
Because $2\sinh v\ge2v$ for $v\ge0$,
\[
e^{(1-s)t}-e^{st}\ge(1-2s)te^{t/2}.
\]
Hence $D''(t)\ge te^{t/2}/360$.
Integrating twice from zero yields
\begin{equation}\label{eq:G}
D(t)\ge G(t/2),\qquad
G(z)=1-\dfrac z3+\dfrac{(z-2)e^z+z+2}{45}.
\end{equation}
To check positivity, let $J(z)=(z-2)e^z+z+2$.
Then $J(0)=J'(0)=0$ and $J''(z)=ze^z$, so $J(z)>0$ for $z>0$.
For $0\le z\le3$, this gives $G(z)>0$, including the endpoint $z=3$.
For $z\ge3$,
\[
G'(z)=-\dfrac13+\dfrac{(z-1)e^z+1}{45}>0.
\]
Thus $D_{2,0}(t)>0$ for every $t\ge0$, proving $h_2\in\BF$.

\proofstep{Strictness of the lower bound.}
Lemma~\ref{lem:tails}, with $c=0$ and $a$ in a compact neighborhood of $2$, gives uniform positivity for $t\ge T$, with some finite $T$.
On $[0,T]$, the continuous function $\Phi_{2,0}$ has a positive minimum.
Formula \eqref{eq:D} is jointly continuous in $(a,t)$ on a compact neighborhood of this set.
Therefore $\Phi_{a,0}>0$ on $[0,T]$ for all $a>2$ sufficiently close to $2$.
Together with the uniform tail estimate, this proves that some $h_a$ with $a>2$ belongs to $\BF$.

\proofstep{An exact upper bound.}
Set $p_n=q_n(7/3,0)$.
The entire Taylor expansion implied by \eqref{eq:D0} is
\begin{equation}\label{eq:Phi-series}
e^{-\alpha}\Phi_{\alpha,0}(t)
=\sum_{n=0}^\infty(-1)^n q_{n+1}(\alpha,0)\dfrac{t^n}{n!},
\qquad 0<\alpha<3.
\end{equation}
We verify that the alternating tail can be bounded at $\alpha=7/3$, $t=5$.
For $\alpha\ge2$, the recurrence \eqref{eq:recurrence}, with $c=0$, implies
\[
(n+1)(q_{n+1}-q_n)
=\left(\dfrac\alpha2-1\right)q_n
+\alpha\sum_{k=1}^n\dfrac{q_{n-k}}{(k+1)(k+2)}\ge0.
\]
Here every $q_n$ is positive.
Consequently,
\[
\dfrac{q_{n+1}}{q_n}
=\dfrac\alpha{n+1}\sum_{k=1}^{n+1}\dfrac k{k+1}\dfrac{q_{n+1-k}}{q_n}
\le\alpha.
\]
Thus the absolute terms $p_{n+1}5^n/n!$ decrease for $n\ge11$ and tend to zero.
For the even partial sum
\begin{equation}\label{eq:S20}
S_{20}=\sum_{n=0}^{20}(-1)^n p_{n+1}\dfrac{5^n}{n!},
\end{equation}
exact rational arithmetic gives $S_{20}=-M/N$, where
\begin{equation}\label{eq:certificate}
\begin{aligned}
M&=6318802906638319646617316077208715843189283,\\
N&=31980502025532614891359112401173930231082254336.
\end{aligned}
\end{equation}
In particular, $6000M>N$ and $S_{20}<-1/6000$.
The alternating-series bound therefore gives
\[
e^{-7/3}\Phi_{7/3,0}(5)\le S_{20}<-\dfrac1{6000}<0.
\]
Thus $h_{7/3}\notin\BF$.
This is a finite exact certificate, not a numerical estimate of $\astar$.

\proofstep{The parameter interval and its endpoint.}
If $h_\alpha\in\BF$ and $0<\beta<\alpha$, then $h_\beta=h_\alpha^{\beta/\alpha}\in\BF$, since positive powers of order at most one preserve the Bernstein class \cite{Berg2008,SSV2012}.
Hence the admissible set is an initial interval.
It contains parameters larger than $2$ and cannot contain any parameter at least $7/3$.
To see that its supremum is included, take admissible $\alpha_j\to\astar$.
For every fixed $y>0$ and every derivative order,
\[
h_{\alpha_j}^{(n)}(y)\longrightarrow h_{\astar}^{(n)}(y).
\]
All Bernstein sign inequalities pass to the limit.
Thus $h_{\astar}\in\BF$.
Since $h_{7/3}\notin\BF$, the upper inequality in \eqref{eq:astar-localization} is strict.
\end{proof}

\begin{remark}\label{rem:exact-localization}
The exact localization \eqref{eq:astar-localization} is sufficient for the transfer argument.
The published decimal \eqref{eq:astar-decimal-intro} is used only to illustrate the size of the sufficient region.
Neither a certified enclosure for all its displayed digits nor a description of the critical zeros is required.
\end{remark}

\subsection{A scalar sine inequality}
Throughout the remainder of this section assume
\begin{equation}\label{eq:transfer-range}
2\le\alpha\le\dfrac73,\qquad0<\delta<1.
\end{equation}
For brevity, write $D_\delta=D_{\alpha\delta,1-\delta}$ and $D_1=D_{\alpha,0}$.
These densities are covered by Proposition~\ref{prop:density}, because $\alpha\delta+1-\delta=1+(\alpha-1)\delta<3$.

\begin{lemma}\label{lem:sine}
Let $0<\theta<\pi$, $Q>0$ and $0<\delta<1$.
If $Q\le\theta/\sin\theta$, then
\begin{equation}\label{eq:sine}
Q^\delta\sin(\delta\theta)\ge\delta Q\sin\theta.
\end{equation}
\end{lemma}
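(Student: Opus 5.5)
The plan is to reduce the inequality to a statement about a single function of $\delta$ and then use concavity. Both sides of \eqref{eq:sine} are positive, since $0<\delta\theta<\theta<\pi$, so we may take logarithms. Consider
\[
\psi(\delta)=\log\sin(\delta\theta)+\delta\log Q-\log\delta-\log Q-\log\sin\theta,\qquad 0<\delta\le1.
\]
The claim is equivalent to $\psi(\delta)\ge0$ on $(0,1)$, and we have $\psi(1)=0$. Dividing by $\delta$ is natural here, so I would work with $\chi(\delta)=\log\bigl(\sin(\delta\theta)/\delta\bigr)+\delta\log Q$. The inequality then reads $\chi(\delta)\ge\chi(1)$.

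The key structural fact is that $\delta\mapsto\log\bigl(\sin(\delta\theta)/(\delta\theta)\bigr)$ is concave on $(0,1]$. Its second derivative is
\[
-\frac{\theta^2}{\sin^2(\delta\theta)}+\frac1{\delta^2},
\]
which is negative because $\sin u<u$ for $u>0$. Hence $\chi$ is concave on $(0,1]$. The limit $\chi(0+)=\log\theta$ is finite, so $\chi$ extends continuously and concavely to $[0,1]$.

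A concave function on $[0,1]$ lies above the chord joining its endpoint values. It therefore suffices to check $\chi(0+)\ge\chi(1)$, that is,
\[
\log\theta\ge\log\sin\theta+\log Q.
\]
This is exactly the hypothesis $Q\le\theta/\sin\theta$. Concavity then gives
\[
\chi(\delta)\ge(1-\delta)\chi(0+)+\delta\chi(1)\ge\chi(1)
\]
for $0<\delta<1$, and exponentiating yields \eqref{eq:sine}.

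The main point to verify carefully is the concavity computation. The first derivative of $\log\sin(\delta\theta)-\log\delta$ is $\theta\cot(\delta\theta)-1/\delta$, and differentiating once more gives the displayed expression. The term $\delta\log Q$ is linear and does not affect concavity, so nothing about the sign of $\log Q$ is needed. The endpoint extension at $\delta=0$ uses only $\sin u/u\to1$. I expect no real obstacle: the only subtlety is recognizing that the hypothesis is precisely the endpoint comparison at $\delta\to0$.
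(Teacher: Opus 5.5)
Your proof is correct and follows essentially the same route as the paper: both rest on the concavity of $v\mapsto\log\bigl(\sin(v\theta)/(v\theta)\bigr)$ on $[0,1]$, extended continuously to $v=0$, and on the chord inequality between $v=0$ and $v=1$. The only difference is cosmetic. You fold the linear term $\delta\log Q$ into the concave function and use the hypothesis as the endpoint comparison $\chi(0+)\ge\chi(1)$, whereas the paper first derives $\sin(\delta\theta)\ge\delta\sin\theta\,(\theta/\sin\theta)^{1-\delta}$ and then applies $Q\le\theta/\sin\theta$.
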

\begin{proof}
The function $L(v)=\log(\sin(v\theta)/(v\theta))$ extends continuously to $L(0)=0$.
For $0<v\le1$,
\[
L''(v)=\dfrac1{v^2}-\dfrac{\theta^2}{\sin^2(v\theta)}<0.
\]
Concavity gives $L(\delta)\ge\delta L(1)$, and hence
\[
\sin(\delta\theta)\ge\delta\sin\theta
\left(\dfrac\theta{\sin\theta}\right)^{1-\delta}
\ge\delta\sin\theta\,Q^{1-\delta}.
\]
Multiplication by $Q^\delta$ proves the assertion.
\end{proof}

\subsection{A single sign change of the difference kernel}
Define
\begin{equation}\label{eq:W}
W_\delta(s)=e^{-\alpha\delta}w_{\alpha\delta,1-\delta}(s)
-\delta e^{-\alpha}w_{\alpha,0}(s),\qquad0<s<1.
\end{equation}
\begin{lemma}\label{lem:sign-change}
Under \eqref{eq:transfer-range},
\begin{equation}\label{eq:W-sign}
W_\delta(s)\ge0\quad(0<s<1/\alpha),
\qquad
W_\delta(s)\le0\quad(1/\alpha<s<1).
\end{equation}
\end{lemma}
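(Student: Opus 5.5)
The plan is to rewrite $W_\delta$ as a positive factor times an expression of the form in Lemma~\ref{lem:sine}, and then read off its sign. Put $r=s/(1-s)$, $u=\alpha s\in(0,\alpha)$ and $X=e^{-\alpha}r^{\alpha s}$. Then $e^{-\alpha\delta}r^{\alpha\delta s}=X^\delta$. Also $\sin(\pi(\alpha\delta s+1-\delta))=\sin(\pi\delta(1-u))$ and $\sin(\pi\alpha s)=\sin(\pi(1-u))$. With $\theta=\pi(1-u)$ and $Q=X/r=e^{-\alpha}r^{u-1}$, the definition \eqref{eq:W} becomes
\[
W_\delta(s)=\dfrac r\pi\Bigl[Q^\delta\sin(\delta\theta)-\delta Q\sin\theta\Bigr].
\]
Since $r>0$, everything reduces to the sign of the bracket, on the two sides of $u=1$, that is, $s=1/\alpha$.

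\emph{The range $0<s<1/\alpha$.} Here $0<u<1$, so $\theta\in(0,\pi)$, and Lemma~\ref{lem:sine} gives the bracket $\ge0$ once I show $Q\le\theta/\sin\theta$. I will use two lower bounds. The first is $\theta/\sin\theta\ge1$. The second is $\theta/\sin\theta\ge(1-u)/u$, which follows from $\sin\theta=\sin(\pi u)\le\pi u$. The two cases are as follows.
\begin{itemize}
\item For $u\le1/2$: from $(1-s)/s\le\alpha/u$ we get $Q\le e^{-\alpha}(\alpha/u)^{1-u}$. It then suffices that $e^{-\alpha}\alpha^{1-u}u^{u}\le1-u$. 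This holds because the left side is at most $\alpha e^{-\alpha}\le2e^{-2}<1/2\le1-u$.
\item For $1/2\le u<1$: it suffices that $Q\le1$, i.e.\ $(1-u)\log((\alpha-u)/u)\le\alpha$. The left side is at most $\tfrac12\log(2\alpha-1)\le\tfrac12\log(11/3)<2\le\alpha$.
\end{itemize}
This settles the first half of \eqref{eq:W-sign}.

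\emph{The range $1/\alpha<s<1$.} Here $u\in(1,\alpha)$. I write $\varphi=\pi(u-1)\in(0,\pi(\alpha-1))$, so that
\[
\text{bracket}=-\Bigl[Q^\delta\sin(\delta\varphi)-\delta Q\sin\varphi\Bigr],
\]
and I need the inner expression to be $\ge0$.
\begin{itemize}
\item For $\varphi<\pi$ (that is, $1<u<2$), this is again Lemma~\ref{lem:sine}, provided $Q\le\varphi/\sin\varphi$. Now $Q=e^{-\alpha}r^{u-1}$ with $r$ possibly large. For fixed $u$ we have $r=u/(\alpha-u)\le2/(\alpha-2)$, which is unbounded as $\alpha\downarrow2$. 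But when $\alpha$ is near $2$ and $u$ is near $2$, the angle $\varphi$ approaches $\pi$ and $\varphi/\sin\varphi\ge\pi(u-1)/(\pi(2-u))$ blows up. So I compare $e^{-\alpha}(u/(\alpha-u))^{u-1}$ with $(u-1)/(2-u)$, splitting at $u=3/2$ as before and using $\alpha\ge2$.
\item For $\varphi\ge\pi$ (that is, $2\le u<\alpha\le7/3$), we have $\sin\varphi\le0$. Hence $-\delta Q\sin\varphi\ge0$, and $\delta\varphi<\tfrac43\pi\delta$. I need $Q^\delta\sin(\delta\varphi)\ge\delta Q\,|\sin\varphi|$ whenever $\delta\varphi>\pi$ makes $\sin(\delta\varphi)$ negative. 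In that case $\delta>3/4$, $|\sin(\delta\varphi)|\le\sin(\pi(\tfrac43\delta-1))$ is small, and $|\sin\varphi|\ge\sin(\pi(u-2))$ is comparable. I would compare these directly using $\varphi\le\tfrac43\pi$.
\end{itemize}

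The main obstacle is this last region, $1/\alpha<s<1$ with $u\ge2$. There Lemma~\ref{lem:sine} does not apply verbatim, and $Q$ becomes large as $s\uparrow1$, because $r\to\infty$ with exponent $u-1\ge1$. Large $Q$ actually helps, since $\delta Q|\sin\varphi|$ dominates $Q^\delta$. So I expect to prove the needed inequality there by the crude bound
\[
Q^{\delta-1}\le1\le\frac{\delta|\sin\varphi|}{|\sin(\delta\varphi)|},
\]
valid where $Q\ge1$, together with a short separate check of the small window where $Q<1$.
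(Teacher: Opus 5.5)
Your reduction follows the paper. Your identity $W_\delta=\frac r\pi\bigl[Q^\delta\sin(\delta\theta)-\delta Q\sin\theta\bigr]$ is \eqref{eq:W-exact} with $u$ shifted by one. For $0<u<2$ (your notation) you conclude, as the paper does, through Lemma~\ref{lem:sine}. Your two-case bounds there are a valid substitute for the paper's single estimate $(1-|u-1|)Q\le2e^{-2}<\tfrac12$. The subcase $1<u<2$ is only sketched, but it closes. Since $\alpha\ge2$, $Q\le e^{-2}\bigl(u/(2-u)\bigr)^{u-1}$. This is at most $\sqrt3\,e^{-2}<1$ for $u\le\tfrac32$. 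For $u\ge\tfrac32$ it is at most $(u-1)/(2-u)$, because $e^{-2}u^{u-1}(2-u)^{2-u}\le2e^{-2}<\tfrac12$.

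The genuine gap is the region $2\le u<\alpha$ with $\delta\varphi>\pi$, which is the hard part of the lemma, and there you give no proof. Your ``crude bound'' rests on $|\sin(\delta\varphi)|\le\delta|\sin\varphi|$ for $\pi<\delta\varphi<\varphi\le\tfrac43\pi$, which you assert without argument. The window $Q<1$ is also not negligible: at $\alpha=7/3$, $u=2$ one has $Q=6e^{-7/3}\approx0.58$.

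Two ingredients are missing.
\begin{enumerate}
\item A uniform lower bound for $Q$ on this region. Since $r=u/(\alpha-u)\ge6$ and $u-1\ge1$, one gets $Q\ge6e^{-7/3}>e^{-1}$.
\item A monotonicity argument in $\delta$. The paper shows that $K(v)=Q^v\sin(v\varphi)/v$ decreases on $(\pi/\varphi,1]$. With $z=v\varphi\in(\pi,\tfrac43\pi]$, the sign of $K'(v)$ is that of $\sin z\,[v\log Q-1+z\cot z]$. Here $\sin z<0$, $v\log Q>-1$, and $z\cot z\ge4\pi/(3\sqrt3)>2$, so $K'<0$.
\end{enumerate}
Then $K(\delta)\ge K(1)$ is exactly $Q^\delta\sin(\delta\varphi)\ge\delta Q\sin\varphi$, for all $Q>e^{-1}$ at once. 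Its case $Q=1$ is the sine inequality you took for granted.

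Your split can also be completed directly. For $e^{-1}<Q\le1$, the function $\delta\mapsto\delta Q^{1-\delta}|\sin\varphi|-|\sin(\delta\varphi)|$ is convex on $(\pi/\varphi,1]$ and vanishes at $\delta=1$. Its slope there is $(1-\log Q)|\sin\varphi|-\varphi|\cos\varphi|<0$, since $1-\log Q<2<\varphi\cot\varphi$. Hence it is nonnegative on the interval, and the case $Q\ge1$ then follows from $Q=1$ via your crude bound. Either way, some argument of this kind is required, and it must use both the lower bound on $Q$ and the size of $z\cot z$ on $(\pi,\tfrac43\pi]$.
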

\begin{proof}
Put
\[
r=\dfrac{s}{1-s},\qquad u=\alpha s-1,\qquad Q=e^{-\alpha}r^u.
\]
Substitution in \eqref{eq:W} gives the exact identity
\begin{equation}\label{eq:W-exact}
\pi W_\delta(s)=r\bigl[\delta Q\sin(\pi u)-Q^\delta\sin(\pi\delta u)\bigr].
\end{equation}

\proofstep{The range $-1<u<1$.}
For $-1<u<0$, let $v=1+u\in(0,1)$.
Then
\[
(1+u)Q=e^{-\alpha}(\alpha-v)^{1-v}v^v
\le\alpha e^{-\alpha}\le2e^{-2}.
\]
For $0<u<1$, the function
\[
\alpha\longmapsto e^{-\alpha}\left(\dfrac{1+u}{\alpha-1-u}\right)^u
\]
is decreasing for $\alpha\ge2$.
Therefore
\[
(1-u)Q\le e^{-2}(1+u)^u(1-u)^{1-u}\le2e^{-2}.
\]
In both cases,
\begin{equation}\label{eq:Q-bound}
(1-|u|)Q\le2e^{-2}<\dfrac12.
\end{equation}
For $0<v<1$, the elementary bound $\sin(\pi v)\le\pi\min\{v,1-v\}$ gives
\[
(1-v)\dfrac{\pi v}{\sin(\pi v)}\ge\dfrac12.
\]
Together with \eqref{eq:Q-bound}, this permits the application of Lemma~\ref{lem:sine} with $\theta=\pi|u|$.
If $u<0$, the right-hand side of \eqref{eq:W-exact} is nonnegative.
If $u>0$, it is nonpositive.
At $u=0$ it vanishes.

\proofstep{The range $1\le u<\alpha-1$.}
The endpoint $u=1$, when present, follows directly from \eqref{eq:W-exact}.
For $u>1$, the assumptions imply $u<\alpha-1\le4/3$ and
\[
r=\dfrac{1+u}{\alpha-1-u}\ge6,
\qquad Q\ge6e^{-7/3}>e^{-1}.
\]
If $\delta u\le1$, then $\sin(\pi u)<0$ and $\sin(\pi\delta u)\ge0$, so \eqref{eq:W-exact} is nonpositive.
Suppose instead that $\delta u>1$, and set $\theta=\pi u$.
Consider
\[
K(v)=\dfrac{Q^v\sin(v\theta)}v,
\qquad \dfrac\pi\theta<v\le1.
\]
Writing $z=v\theta$, the sign of $K'(v)$ is the sign of
\[
\sin z\,[v\log Q-1+z\cot z].
\]
Here $\pi<z\le4\pi/3$, so $\sin z<0$.
Moreover, $v\log Q>-1$, and $z\cot z$ is decreasing on this interval, with
\[
z\cot z\ge\dfrac{4\pi}{3\sqrt3}>2.
\]
Thus $K'(v)<0$ and $K(\delta)\ge K(1)$.
This is equivalent to $Q^\delta\sin(\delta\theta)\ge\delta Q\sin\theta$, which again makes \eqref{eq:W-exact} nonpositive.
All possible values of $u$ have now been covered.
\end{proof}

\subsection{The quantitative transfer estimate}
\begin{theorem}[Positivity transfer]\label{thm:transfer}
Under \eqref{eq:transfer-range}, for every $t\ge0$,
\begin{equation}\label{eq:transfer}
D_\delta(t)\ge\delta D_1(t)+(1-\delta)\left(1-\dfrac89\delta\right).
\end{equation}
In particular, nonnegativity of $D_{\alpha,0}$ implies strict positivity of $D_{\alpha\delta,1-\delta}$.
\end{theorem}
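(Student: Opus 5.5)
The plan is to subtract $\delta D_1$ from $D_\delta$ in the representation \eqref{eq:D}. This splits the difference into a polynomial part and a single integral against the difference kernel $W_\delta$ of \eqref{eq:W}. The one sign change in Lemma~\ref{lem:sign-change} is then exploited by a Chebyshev-type (second mean value) argument. Concretely, writing $q_j^{(\delta)}=q_j(\alpha\delta,1-\delta)$ and $q_j^{(1)}=q_j(\alpha,0)$,
\[
D_\delta(t)-\delta D_1(t)=\bigl(q_1^{(\delta)}-\delta q_1^{(1)}\bigr)+\bigl(q_1^{(\delta)}-q_2^{(\delta)}-\delta(q_1^{(1)}-q_2^{(1)})\bigr)t+\int_0^1W_\delta(s)R(s,t)\dd s,
\]
where $R(s,t)=e^{(1-s)t}-1-(1-s)t$. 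The constant term is explicit: $q_1^{(\delta)}-\delta q_1^{(1)}=\alpha\delta/2+1-\delta-\delta\alpha/2=1-\delta$. This already matches the leading factor $1-\delta$ on the right-hand side of \eqref{eq:transfer}. The linear coefficient is an explicit quadratic in $\delta$ by \eqref{eq:q12}, and I will compute it and keep it.

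For the integral, I will not use $R$ directly. Instead I write $R(s,t)=(1-s)^2t^2f((1-s)t)$ with $f(x)=(e^x-1-x)/x^2$. Here $f$ is positive and increasing on $[0,\infty)$, so $s\mapsto R(s,t)/(1-s)^2$ is nonincreasing for each $t\ge0$. Put $s_0=1/\alpha$. By Lemma~\ref{lem:sign-change}, $W_\delta\ge0$ exactly where $R/(1-s)^2\ge R(s_0,t)/(1-s_0)^2$, and $W_\delta\le0$ where the reverse holds. Hence
\[
\int_0^1W_\delta R\dd s\ \ge\ \frac{R(s_0,t)}{(1-s_0)^2}\int_0^1(1-s)^2W_\delta(s)\dd s .
\]
The moment $\int_0^1(1-s)^2e^{-a}w_{a,c}(s)\dd s$ is computable exactly. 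In the proof of Proposition~\ref{prop:density}, $Q(y)=\int_0^1(1-s)^2w_{a,c}(s)(y+s)^{-1}\dd s$ equals $(y+1)^2P(y)-e^a[q_1y+2q_1-q_2]$. Comparing the $y^{-1}$ coefficients at infinity therefore gives the moment as $e^a$ times a fixed combination of $q_1,q_2,q_3$ (namely $q_3-2q_2+q_1$, up to checking signs). Equivalently, it equals $D''(0)$ via \eqref{eq:D-derivatives}, and $D''(0)=q_1-2q_2+q_3$ by \eqref{eq:D0}. So the moment term is $m(\alpha,\delta)=D_\delta''(0)-\delta D_1''(0)$, an explicit polynomial in $(\alpha,\delta)$.

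This yields a lower bound of the form
\[
D_\delta(t)-\delta D_1(t)\ \ge\ (1-\delta)+\lambda t+m\,\frac{e^{(1-s_0)t}-1-(1-s_0)t}{(1-s_0)^2},
\]
with $\lambda(\alpha,\delta)$ the explicit linear coefficient. The remaining step, which I expect to be the main obstacle, is to show that the right side is at least $(1-\delta)(1-\tfrac89\delta)$ for all $t\ge0$, uniformly for $2\le\alpha\le7/3$. I expect $\lambda$ to be negative and of order $\delta(1-\delta)$ and $m$ to be positive, so the worst case is an interior minimum in $t$. I would first try to factor $(1-\delta)$ out of $\lambda$ and $m$. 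If $m$ has the wrong sign for some parameters, I would fall back on the cruder estimate obtained from the first moment. Then I would bound $e^x-1-x\ge x^2/2$ and minimize the resulting quadratic in $t$, giving a deficit $\lambda^2(1-s_0)^0/(2m)$. The remaining task is the elementary check that this deficit is at most $\tfrac89\delta(1-\delta)$ on $[2,7/3]\times(0,1)$, using monotonicity in $\alpha$ and checking the endpoints.

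Finally, the concluding assertion follows directly. If $D_1\ge0$, then $D_\delta\ge(1-\delta)(1-\tfrac89\delta)>0$ for $0<\delta<1$.
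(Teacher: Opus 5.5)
Your decomposition, the constant term $1-\delta$, and the Chebyshev-type inequality are all correct. The gap is that the signs you anticipate for $\lambda$ and $m$ are the reverse of the truth, and with the true signs your bound is useless. With $\varepsilon=\alpha-2\in[0,1/3]$, expanding $q_1,q_2,q_3$ from \eqref{eq:q1234} gives
\[
\lambda=\frac{\delta(1-\delta)\varepsilon^2}{8}\ge0,
\qquad
m=D_\delta''(0)-\delta D_1''(0)=-\frac{\delta(1-\delta)\,\varepsilon\bigl[(1+\delta)\varepsilon^2+2\varepsilon+4\bigr]}{48}.
\]
So $m<0$ for every $\alpha\in(2,7/3]$, which includes the case $\alpha=\astar$ used later. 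Your lower bound $(1-\delta)+\lambda t+m\,(e^{(1-s_0)t}-1-(1-s_0)t)/(1-s_0)^2$ therefore tends to $-\infty$ exponentially as $t\to\infty$. The step ``use $e^x-1-x\ge x^2/2$, then minimize a quadratic'' is invalid here, because that inequality gets multiplied by a negative coefficient. The first-moment fallback does not exist either. Since $w_{\alpha,0}(s)\sim\pi^{-1}\sin(\pi\alpha)(1-s)^{-\alpha}$ as $s\uparrow1$ with $\sin(\pi\alpha)>0$, the integral $\int_0^1(1-s)W_\delta(s)\dd s$ diverges to $-\infty$ for $\alpha>2$. As written, the proposal never reaches the inequality, and nothing in it produces the constant $8/9$.

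The missing idea is to apply the one-sign-change comparison one order higher, where the moment is positive. Set $m_3:=\int_0^1(1-s)^3W_\delta\dd s=D_\delta'''(0)-\delta D_1'''(0)$; using $q_4$ one finds
\[
1152\,\frac{m_3}{\delta(1-\delta)}=3(\delta^2+\delta+1)\varepsilon^4+(24\delta-8)\varepsilon^2-32\varepsilon+16\ge\frac{121}{27}.
\]
In your own framework, write $e^x-1-x=x^2/2+x^3\tilde f(x)$ with $\tilde f(x)=(e^x-1-x-x^2/2)/x^3$ increasing. Keep the quadratic piece exactly: it contributes $\frac m2t^2\ge-\frac B2\delta(1-\delta)t^2$ with $B=11/324$. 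Apply your Chebyshev step only to the remainder, obtaining $\int_0^1(1-s)^3W_\delta(s)\tilde f((1-s)t)\dd s\ge\tilde f((1-s_0)t)\,m_3$. Then use $e^x-1-x-x^2/2\ge x^4/24$, $1-s_0=1-1/\alpha\ge1/2$, and discard $\lambda t\ge0$. With $C=121/31104$ this yields
\[
D_\delta(t)-\delta D_1(t)-(1-\delta)\ge\delta(1-\delta)\Bigl(\frac{C}{48}t^4-\frac B2t^2\Bigr)\ge-\frac{3B^2}{C}\,\delta(1-\delta)=-\frac89\,\delta(1-\delta).
\]
This is the paper's proof in different packaging. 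There the sign-change lemma is applied to $R_\delta^{(4)}(t)$, that is, to $\int_0^1(1-s)^4W_\delta(s)e^{(1-s)t}\dd s$ with the monotone factor $(1-s)e^{(1-s)t}$. Taylor's formula to fourth order, with $R_\delta'(0)=\varepsilon^2/8$, $R_\delta''(0)\ge-B$ and $R_\delta'''(0)\ge C$, then absorbs the negative second moment as a harmless quadratic term.
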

\begin{proof}
Define
\begin{equation}\label{eq:R}
R_\delta(t)=\dfrac{D_\delta(t)-\delta D_1(t)-(1-\delta)}{\delta(1-\delta)}
\end{equation}
and put $\varepsilon=\alpha-2\in[0,1/3]$.
For the parameters of $D_\delta$, \eqref{eq:ellk} becomes
\begin{equation}\label{eq:ell-transfer}
\ell_k=\dfrac1k+\delta\left(\dfrac{2+\varepsilon}{k+1}-\dfrac1k\right).
\end{equation}
Substitution of \eqref{eq:ell-transfer} into the four coefficients in \eqref{eq:q1234}, followed by \eqref{eq:D0}, gives
\begin{align}
R_\delta(0)&=0,\qquad R'_\delta(0)=\dfrac{\varepsilon^2}8,\label{eq:R01}\\
R''_\delta(0)&=-\dfrac{\varepsilon[(1+\delta)\varepsilon^2+2\varepsilon+4]}{48},\label{eq:R2}\\
1152R'''_\delta(0)&=3(\delta^2+\delta+1)\varepsilon^4
+(24\delta-8)\varepsilon^2-32\varepsilon+16.\label{eq:R3}
\end{align}
These expressions use only $q_1,q_2,q_3,q_4$.
In particular, the relevant combinations are $D'(0)=q_1-q_2$, $D''(0)=q_1-2q_2+q_3$, and $D'''(0)=q_1-3q_2+3q_3-q_4$.
Introduce the rational constants
\begin{equation}\label{eq:BC}
B=\dfrac{11}{324},\qquad C=\dfrac{121}{31104}.
\end{equation}
Equation \eqref{eq:R2} implies $R''_\delta(0)\ge-B$ by using $\delta\le1$ and $\varepsilon\le1/3$.
After deleting the nonnegative terms involving $\delta$ in \eqref{eq:R3}, we obtain
\[
1152R'''_\delta(0)\ge3\varepsilon^4-8\varepsilon^2-32\varepsilon+16.
\]
The polynomial on the right is decreasing on $[0,1/3]$, because its derivative is $12\varepsilon^3-16\varepsilon-32<0$ there.
Its value at $1/3$ is $121/27$.
Hence
\begin{equation}\label{eq:R-bounds}
R''_\delta(0)\ge-B,\qquad R'''_\delta(0)\ge C>0.
\end{equation}
Set $\lambda=1-1/\alpha\ge1/2$.
By \eqref{eq:D-derivatives} and \eqref{eq:W},
\[
R^{(4)}_\delta(t)=\dfrac1{\delta(1-\delta)}
\int_0^1(1-s)^4W_\delta(s)e^{(1-s)t}\dd s.
\]
To compare this with $R'''_\delta(0)$, observe that $(1-s)^3W_\delta(s)$ is nonnegative for $s<1/\alpha$ and nonpositive for $s>1/\alpha$.
The factor $(1-s)e^{(1-s)t}$ is at least $\lambda e^{\lambda t}$ on the first interval and at most this value on the second.
Thus Lemma~\ref{lem:sign-change} gives
\begin{equation}\label{eq:R4}
R^{(4)}_\delta(t)\ge\lambda e^{\lambda t}R'''_\delta(0)\ge\dfrac C2,
\qquad t\ge0.
\end{equation}
All these integrals converge absolutely by Proposition~\ref{prop:density}.
Taylor's formula with integral remainder, using \eqref{eq:R01}, \eqref{eq:R-bounds} and \eqref{eq:R4}, now yields
\[
\begin{aligned}
R_\delta(t)
&=R'_\delta(0)t+\dfrac{R''_\delta(0)}2t^2+\dfrac{R'''_\delta(0)}6t^3
+\dfrac16\int_0^t(t-v)^3R^{(4)}_\delta(v)\dd v\\
&\ge-\dfrac B2t^2+\dfrac C{48}t^4\\
&=\dfrac C{48}\left(t^2-\dfrac{12B}C\right)^2-\dfrac{3B^2}C
\ge-\dfrac89.
\end{aligned}
\]
The last equality uses $3B^2/C=8/9$.
Substituting this inequality into \eqref{eq:R} proves \eqref{eq:transfer}.
Finally, $(1-\delta)(1-8\delta/9)>0$ for $0<\delta<1$.
\end{proof}

\section{The sharp linear region}\label{sec:linear}
The preceding auxiliary results yield the explicit sufficient condition.
Its optimality concerns a universal coefficient, not the entire two-parameter boundary.

\begin{theorem}[Sharp linear sufficient condition]\label{thm:linear}
Let $a>0$ and $-1<b\le0$.
If
\begin{equation}\label{eq:linear-condition}
a\le\astar(1+b),
\end{equation}
then $F_{a,b}\in\BF$ and
\begin{equation}\label{eq:Frep}
F_{a,b}(x)=\kappa_b+\int_0^\infty(1-e^{-xt})\rho_{a,b}(t)\dd t,
\qquad x>0,
\end{equation}
where $\rho_{a,b}$ is a continuous, nonnegative, integrable density, explicitly given in \eqref{eq:rho} below.
Its total mass is
\begin{equation}\label{eq:rho-mass}
\int_0^\infty\rho_{a,b}(t)\dd t=e^a-\kappa_b.
\end{equation}
If $b<0$, then, with $\tau=a/\astar-b\in(-b,1]$,
\begin{equation}\label{eq:rho-lower}
\rho_{a,b}(t)\ge\dfrac{-ab}9\left(1-\dfrac{8b}\tau\right)e^{a-at}>0,
\qquad t\ge0.
\end{equation}
Moreover, $\astar$ is the largest constant $C$ for which
\[
0<a\le C(1+b)\quad\Longrightarrow\quad F_{a,b}\in\BF
\]
holds for every $-1<b\le0$.
\end{theorem}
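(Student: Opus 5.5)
Throughout I would work in the normalized variables $c=-b\in[0,1)$ and $\delta=1+b=1-c\in(0,1]$, using \eqref{eq:scaling}. The hypothesis \eqref{eq:linear-condition} reads $a\le\astar\delta$. Write $\alpha=a/\delta\in(0,\astar]$, so that $H_{a,c}=H_{\alpha\delta,1-\delta}$. By Proposition~\ref{prop:threshold}, $h_\alpha\in\BF$, and therefore $D_{\alpha,0}\ge0$ by \eqref{eq:density-criterion}. If $\delta=1$ there is nothing further to prove, so assume $0<\delta<1$. I would split according to the size of $\alpha$.

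\emph{Case $2\le\alpha\le\astar$.} Since $\astar<7/3$, the pair $(\alpha,\delta)$ satisfies \eqref{eq:transfer-range}. Theorem~\ref{thm:transfer} then gives $D_{a,c}(t)\ge(1-\delta)(1-\tfrac89\delta)>0$ for all $t\ge0$, and \eqref{eq:density-criterion} yields $H_{a,c}\in\BF$.

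\emph{Case $0<\alpha<2$.} The transfer lemmas require $\alpha\ge2$, so here I would use instead the power closure of $\BF$ together with the identity $H_{a',c'}^p=H_{pa',pc'}$. Put $p=a/2+c$; then $a<2\delta$ is equivalent to $p<1$. Set $\delta'=1-c/p\in(0,1]$. A direct check gives $p\cdot2\delta'=a$ and $p(1-\delta')=c$, so $H_{a,c}=H_{2\delta',1-\delta'}^{\,p}$. The base function is Bernstein by the previous case with $\alpha=2$, or by Proposition~\ref{prop:threshold} when $\delta'=1$. Since positive powers of order at most one preserve $\BF$, we get $H_{a,c}\in\BF$.

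For the representation, rescale \eqref{eq:Laplace}: $e^a-F_{a,b}(x)=\int_0^\infty e^{-xt/a}\Phi_{a,c}(t)\dd t$. This suggests defining
\[
\rho_{a,b}(t)=a\,\Phi_{a,-b}(at)=a\,e^{a-at}D_{a,-b}(at).
\]
This function is continuous and integrable, nonnegative by the criterion, and has total mass $e^a-\kappa_b$ by \eqref{eq:Phi-mass}. Subtracting the Laplace formula from the mass identity gives \eqref{eq:Frep}.

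For the explicit lower bound \eqref{eq:rho-lower}, I would apply the transfer inequality in a form valid for all admissible $a$. One option is to compare with the extremal slice $\alpha=\astar$, viewing $(a,c)$ as lying below the line through the point with $\delta$-parameter $\tau=a/\astar+c$. The transfer constant $(1-\delta)(1-\tfrac89\delta)$ should then produce the stated quantity $\tfrac{-ab}{9}(1-8b/\tau)$. I expect matching this exact expression to be the main obstacle. It needs positivity in both regimes, and in the power case a lower bound on $D$ does not follow directly from the power representation. I would first check whether monotonicity of the transfer bound in $\delta$, combined with $D_{\astar,0}\ge0$, gives the inequality directly after rewriting $\delta$ in terms of $\tau$.

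Sharpness is immediate. Suppose $C$ works for every $b$. Taking $b=0$ shows that $h_C=F_{C,0}(C\,\cdot)\in\BF$, so $C\le\astar$ by \eqref{eq:alpha-interval}. The constant $\astar$ itself works by the first part of the proof.
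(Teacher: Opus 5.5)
Your proofs of $F_{a,b}\in\BF$, of the representation \eqref{eq:Frep} with $\rho_{a,b}(t)=a\Phi_{a,-b}(at)$, and of the optimality of $\astar$ are correct. Splitting into $\alpha\in[2,\astar]$ (direct transfer) and $\alpha<2$ (a power of a point on the $\alpha=2$ line) is a valid alternative to the paper's single power argument. You should also record that $a+c\le\astar(1-c)+c\le\astar<3$, so that Proposition~\ref{prop:density} applies to $(a,c)$.

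The genuine gap is \eqref{eq:rho-lower}: you leave it unproved, and neither route you sketch can reach it. Applying Theorem~\ref{thm:transfer} directly at $\alpha=a/(1-c)$, using only $D_{\alpha,0}\ge0$, yields $D_{a,c}\ge c(1+8c)/9$. This is strictly smaller than the required $\frac c9\bigl(1+\frac{8c}{\tau}\bigr)$ whenever $c>0$ and $\tau<1$, i.e.\ strictly below the line. Monotonicity in $\delta$ cannot recover the missing factor, because you have no quantitative positive lower bound on $D_{\alpha,0}$ for $\alpha<\astar$. Factoring through the $\alpha=2$ line would likewise give at best $\frac c9(1+8c/p)$ with $p=a/2+c>\tau$.

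The step that fails is your assertion that the power representation gives no lower bound on $D$. It does, and this is the missing idea. Put $\tau=a/\astar+c$, $A_0=a/\tau$, $C_0=c/\tau$. Then $A_0=\astar(1-C_0)$ lies on the $\astar$ comparison line and $H_{a,c}=H_{A_0,C_0}^{\tau}$. With $m=e^{-A_0}\Phi_{A_0,C_0}\ge0$ you have $H_{A_0,C_0}=e^{A_0}(1-\widehat m)$ and $\|m\|_1\le1$, hence
\[
e^a-H_{a,c}=e^a\bigl[1-(1-\widehat m)^{\tau}\bigr]
=e^a\sum_{n\ge1}\beta_n\widehat m^{\,n},
\qquad \beta_n=(-1)^{n+1}\binom{\tau}{n}\ge0 .
\]
Uniqueness of Laplace transforms gives $\Phi_{a,c}=e^a\sum_{n\ge1}\beta_n m^{*n}$ in $L^1(0,\infty)$, and every term is nonnegative. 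Keeping only $n=1$ gives $D_{a,c}\ge\tau D_{A_0,C_0}$, first almost everywhere and then everywhere by continuity. Theorem~\ref{thm:transfer} with $\alpha=\astar$, $\delta=1-C_0$ and $D_{\astar,0}\ge0$ gives $D_{A_0,C_0}\ge C_0(1+8C_0)/9$. Hence $D_{a,c}\ge\frac c9\bigl(1+\frac{8c}{\tau}\bigr)$, and substituting into $\rho_{a,b}(t)=ae^{a-at}D_{a,-b}(at)$ gives exactly \eqref{eq:rho-lower}. The same factorization also gives membership for every $0<a\le\astar(1-c)$ at once, so your case split becomes unnecessary.
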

\begin{proof}
\proofstep{Positivity on the comparison line.}
Take $\alpha=\astar$ in Theorem~\ref{thm:transfer}; this is permitted by Proposition~\ref{prop:threshold}.
Since $h_{\astar}\in\BF$, Proposition~\ref{prop:density} gives $D_{\astar,0}(t)\ge0$ for $t\ge0$.
For $0<c<1$, choose $\delta=1-c$.
The transfer inequality becomes
\begin{equation}\label{eq:line-positivity}
D_{\astar(1-c),c}(t)
\ge(1-c)D_{\astar,0}(t)+\dfrac{c(1+8c)}9
\ge\dfrac{c(1+8c)}9>0.
\end{equation}
Therefore $H_{\astar(1-c),c}\in\BF$.
The endpoint $c=0$ is exactly the definition of $\astar$.
This proves positivity on the entire comparison line.

\proofstep{Extension by a positive convolution expansion.}
Suppose $0\le c<1$ and $0<a\le\astar(1-c)$.
Set
\begin{equation}\label{eq:tau}
\tau=\dfrac a\astar+c,\qquad A_0=\dfrac a\tau,\qquad C_0=\dfrac c\tau.
\end{equation}
Then $0<\tau\le1$, $0\le C_0<1$, and
\[
A_0=\astar(1-C_0),\qquad H_{a,c}(y)=H_{A_0,C_0}(y)^\tau.
\]
Thus the usual closure property under powers already proves $H_{a,c}\in\BF$.
We give the density-level argument as well, both to identify the measure and to retain a quantitative lower bound.
Define
\[
m(t)=e^{-A_0}\Phi_{A_0,C_0}(t),
\qquad \widehat m(y)=\int_0^\infty e^{-yt}m(t)\dd t.
\]
The comparison-line result gives $m\ge0$, and \eqref{eq:Phi-mass} shows $\|m\|_1\le1$.
Moreover,
\[
H_{A_0,C_0}(y)=e^{A_0}(1-\widehat m(y)).
\]
For $0<\tau<1$, let
\begin{equation}\label{eq:binomial}
\beta_n=(-1)^{n+1}\binom\tau n
=\dfrac\tau{n!}\prod_{j=1}^{n-1}(j-\tau)>0,
\end{equation}
where $\beta_1=\tau$.
For $\tau=1$, set $\beta_1=1$ and $\beta_n=0$ for $n\ge2$.
The binomial expansion is
\[
1-(1-z)^\tau=\sum_{n=1}^\infty\beta_nz^n,
\qquad0\le z<1.
\]
Because $\beta_n\ge0$, the partial sums increase monotonically as $z\uparrow1$; hence monotone convergence gives
\[
\sum_{n=1}^\infty\beta_n
=\lim_{z\uparrow1}\sum_{n=1}^\infty\beta_nz^n
=\lim_{z\uparrow1}\bigl[1-(1-z)^\tau\bigr]=1.
\]
For $\tau=1$ this is immediate from the convention above.
If $m^{*n}$ denotes the $n$-fold convolution on $[0,\infty)$, then
\begin{equation}\label{eq:positive-convolution}
\Phi_{a,c}=e^a\sum_{n=1}^\infty\beta_nm^{*n}
\end{equation}
as an identity in $L^1(0,\infty)$.
Indeed, the right-hand side converges in $L^1$ because $\sum\beta_n\|m\|_1^n\le1$, and its Laplace transform is $e^a-H_{a,c}$.
Uniqueness and Proposition~\ref{prop:density} identify it with $\Phi_{a,c}$.
The proposition applies here since
\[
a+c\le\astar(1-c)+c\le\astar<3.
\]
Every summand in \eqref{eq:positive-convolution} is nonnegative.
In particular,
\[
\Phi_{a,c}(t)\ge\tau e^{a-A_0}\Phi_{A_0,C_0}(t),
\qquad D_{a,c}(t)\ge\tau D_{A_0,C_0}(t).
\]
These inequalities hold almost everywhere from \eqref{eq:positive-convolution}, and everywhere by continuity.
If $c>0$, \eqref{eq:line-positivity} therefore yields
\begin{equation}\label{eq:D-lower}
D_{a,c}(t)\ge\dfrac c9\left(1+\dfrac{8c}\tau\right)>0.
\end{equation}

\proofstep{Return to the original variables.}
By \eqref{eq:scaling} and a change of variables in \eqref{eq:Laplace}, the original density is
\begin{equation}\label{eq:rho}
\rho_{a,b}(t)=a\Phi_{a,-b}(at)=ae^{a-at}D_{a,-b}(at).
\end{equation}
Equations \eqref{eq:D}, \eqref{eq:weight} and \eqref{eq:q12} make this expression fully explicit.
Written directly in the original parameters, it is
\begin{equation}\label{eq:rho-original}
\begin{aligned}
\rho_{a,b}(t)=e^{-at}\biggl\{&e^a[p_1+(ap_1-p_2)t]\\
&+\int_0^a\omega_{a,b}(s)
\bigl[e^{(a-s)t}-1-(a-s)t\bigr]\dd s\biggr\},
\end{aligned}
\end{equation}
where
\begin{equation}\label{eq:p12}
p_1=\dfrac{a^2}2-ab,
\qquad p_2=\dfrac{a^3}3-\dfrac{ba^2}2+\dfrac{p_1^2}2,
\end{equation}
and
\begin{equation}\label{eq:omega}
\omega_{a,b}(s)=\dfrac1\pi\left(\dfrac{s}{a-s}\right)^{s-b}
\sin\bigl(\pi(s-b)\bigr),\qquad0<s<a.
\end{equation}
The integration limits and the two regularizing subtractions in \eqref{eq:rho-original} are essential.
The weight $\omega_{a,b}$ need not itself be nonnegative.
It is the complete density $\rho_{a,b}$ whose nonnegativity has been established.

The mass identity \eqref{eq:rho-mass} follows from \eqref{eq:Phi-mass}.
Subtracting the Laplace representation from this mass identity proves \eqref{eq:Frep}.
For $b<0$, \eqref{eq:D-lower} becomes exactly \eqref{eq:rho-lower}.
For every integer $n\ge1$, differentiation under the integral gives
\begin{equation}\label{eq:strict-derivatives}
(-1)^{n-1}F_{a,b}^{(n)}(x)
=\int_0^\infty t^ne^{-xt}\rho_{a,b}(t)\dd t>0,
\qquad x>0.
\end{equation}
The differentiations are justified by $\rho_{a,b}\in L^1$ and boundedness of $t^ne^{-xt}$ away from $x=0$.
Strictness follows because the nonnegative density is continuous and $\rho_{a,b}(0)=e^ap_1>0$.

\proofstep{Optimality of the universal coefficient.}
Suppose a constant $C>\astar$ satisfied the implication in the theorem for every $-1<b\le0$.
Taking $b=0$ and $a=C$ would give $F_{C,0}\in\BF$.
But $F_{C,0}(Cy)=h_C(y)$, contradicting \eqref{eq:alpha-interval}.
Thus no coefficient larger than $\astar$ is possible.
\end{proof}

\subsection{Why the sufficient line is not the exact boundary}
\begin{proposition}[Local extension beyond the sufficient line]\label{prop:extension}
For each fixed $-1<b<0$, there exists $\eta_b>0$ such that
\begin{equation}\label{eq:extension}
0<a\le\astar(1+b)+\eta_b\quad\Longrightarrow\quad F_{a,b}\in\BF.
\end{equation}
The proposition does not specify an explicit value of $\eta_b$.
\end{proposition}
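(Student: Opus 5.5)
Fix $-1<b<0$ and put $c=-b\in(0,1)$, $a_0=\astar(1-c)$. The plan is a perturbation argument in the normalized variables. At $(a_0,c)$ the density is bounded below by a positive constant, and the tail estimates of Lemma~\ref{lem:tails}, which are uniform near $(a_0,c)$ because $c>0$, keep it positive for large $t$. Continuity in $a$ then covers the remaining compact range of $t$.

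\emph{Step 1: a uniform positive lower bound on the line.} By \eqref{eq:line-positivity}, $D_{a_0,c}(t)\ge c(1+8c)/9=:m_c>0$ for every $t\ge0$. Hence $\Phi_{a_0,c}(t)\ge m_c e^{a_0-t}>0$. We have $a_0+c\le\astar<3$ and $0<c<1$, so $(a_0,c)$ lies in the open set \eqref{eq:interior-domain}. Choose $r>0$ such that $K=[a_0-r,a_0+r]\times\{c\}$ is a compact subset of \eqref{eq:interior-domain}.

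\emph{Step 2: uniform tails.} Apply \eqref{eq:tail-bound-K} on $K$. This gives a $T=T_K$ such that
\[
\Phi_{a,c}(t)\ge \frac{L_K}{2t^{c+1}}>0 \qquad (a\in[a_0-r,a_0+r],\ t\ge T),
\]
where $L_K=\Gamma(c+1)\sin(\pi c)/\pi>0$. The essential point is that $c>0$ is fixed, so the leading coefficient does not depend on $a$ and is strictly positive. This is exactly why the statement excludes $b=0$.

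\emph{Step 3: the compact range $[0,T]$.} From \eqref{eq:D}, $D_{a,c}(t)$ is jointly continuous in $(a,t)$ on $[a_0-r,a_0+r]\times[0,T]$. The integral converges locally uniformly in $a$ because the endpoint exponent condition $a+c<3$ holds with room to spare, so dominated convergence applies. Uniform continuity on this compact set gives $0<\eta\le r$ with $D_{a,c}(t)\ge m_c/2>0$ for $|a-a_0|\le\eta$ and $t\in[0,T]$. Together with Step 2, $D_{a,c}\ge0$ on $[0,\infty)$ for all $a\in[a_0,a_0+\eta]$. By \eqref{eq:density-criterion} and \eqref{eq:scaling}, $F_{a,b}\in\BF$ for these $a$. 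For $0<a\le a_0$ we already have $F_{a,b}\in\BF$ by Theorem~\ref{thm:linear}. Setting $\eta_b=\eta$ proves \eqref{eq:extension}.

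I expect the main obstacle to be checking the joint continuity and local uniformity in Step 3, namely that the integral in \eqref{eq:D} varies continuously in $a$ near $s=1$. This follows from the bound $|w_{a,c}(s)|\le C(1-s)^{-a-c}$ with $a+c\le\astar+r<3$, together with $|e^{(1-s)t}-1-(1-s)t|\le (1-s)^2t^2e^{t}/2$. These give an integrable majorant uniformly in $a$ for $t\le T$. No estimate of $\eta_b$ is obtained, consistent with the statement.
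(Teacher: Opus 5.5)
Your argument is correct and is essentially the paper's proof: positivity on the comparison line from \eqref{eq:line-positivity}, then Lemma~\ref{lem:tails} uniformly on a compact $a$-interval at fixed $c$, then joint continuity of \eqref{eq:D} on $[a_0-r,a_0+r]\times[0,T]$, and finally the union with the region of Theorem~\ref{thm:linear}; your explicit margin $c(1+8c)/9$ simply replaces the paper's ``positive minimum on $[0,T]$''. One side remark is misattributed: the tail estimate is not why $b=0$ is excluded, since for $c=0$ Lemma~\ref{lem:tails} still gives tails uniformly positive in $a$ (leading term $a/t^2\ge(\min I)/t^2$); what actually fails at $b=0$ is the margin, which vanishes at $c=0$, and no extension can exist there because $\astar$ is by definition the exact one-parameter threshold.
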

\begin{proof}
Fix $c=-b\in(0,1)$ and let $a_0=\astar(1-c)$.
The comparison-line estimate \eqref{eq:line-positivity} gives $\Phi_{a_0,c}(t)>0$ for every $t\ge0$.
Choose a compact interval $I\subset(0,3-c)$ whose interior contains $a_0$.
Lemma~\ref{lem:tails} gives positivity of $\Phi_{a,c}(t)$ for all $t\ge T$ and all $a\in I$, with $T$ independent of $a$.
On $[0,T]$, $\Phi_{a_0,c}$ has a positive minimum.
Joint continuity, which follows from \eqref{eq:D} by dominated convergence on compact parameter sets, preserves this positivity when $a$ is sufficiently close to $a_0$.
Thus $H_{a,c}\in\BF$ for $a$ in some open interval around $a_0$.
Combining this interval with the region already covered by Theorem~\ref{thm:linear} proves \eqref{eq:extension}.
\end{proof}

This proposition makes the scope of sharpness precise.
The coefficient is sharp because of the endpoint $b=0$, although every fixed negative $b$ admits a local improvement beyond the line.
The exact implicit boundary established in Section~\ref{sec:boundary} lies above the sufficient comparison line at every fixed negative $b$.

\section{Parameter monotonicity and exclusion of gaps}\label{sec:parameters}
The linear estimate does not by itself imply that the admissible parameters have no gaps.
We now establish the additional property needed for an exact boundary description.
Throughout this section let
\begin{equation}\label{eq:Omega-m}
\begin{gathered}
\Omega=\{(a,c):a>0,\ 0\le c<1,\ a+c<3\},\\
m_{a,c}(t)=e^{-a}\Phi_{a,c}(t)=e^{-t}D_{a,c}(t).
\end{gathered}
\end{equation}
By Proposition~\ref{prop:density}, $H_{a,c}\in\BF$ if and only if $m_{a,c}\ge0$ on $[0,\infty)$.
Also,
\begin{equation}\label{eq:m0}
m_{a,c}(0)=\dfrac a2+c>0.
\end{equation}

\subsection{Convolution identities}
For locally integrable functions on $[0,\infty)$, write $(f*g)(t)=\int_0^t f(t-s)g(s)\dd s$.
Define
\begin{equation}\label{eq:kjQ}
\begin{aligned}
k(t)&=\int_0^1re^{-rt}\dd r=\dfrac{1-(1+t)e^{-t}}{t^2},\\
j(t)&=\int_0^1e^{-rt}\dd r=\dfrac{1-e^{-t}}t,\\
Q_{a,c}(t)&=t\bigl[ak(t)+cj(t)\bigr]=at k(t)+c(1-e^{-t}),\qquad t>0.
\end{aligned}
\end{equation}
The removable singularities are filled in by $k(0)=1/2$ and $j(0)=1$.

\begin{lemma}[Parameter and Volterra identities]\label{lem:convolution}
The density $m_{a,c}(t)$ is continuously differentiable in its parameters, locally uniformly for $t$ in compact intervals, with the $c$-derivative understood from the right at $c=0$.
For $(a,c)\in\Omega$ and $t\ge0$,
\begin{equation}\label{eq:parameter-identities}
\partial_a m_{a,c}=k-k*m_{a,c},
\qquad\partial_c m_{a,c}=j-j*m_{a,c},
\end{equation}
and
\begin{equation}\label{eq:Volterra}
tm_{a,c}(t)=Q_{a,c}(t)-(Q_{a,c}*m_{a,c})(t).
\end{equation}
\end{lemma}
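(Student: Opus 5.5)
The plan is to prove all three identities on the Laplace side, where they become algebraic identities for logarithmic derivatives of $H_{a,c}$. They are then transferred back to the densities by uniqueness of Laplace transforms and continuity.

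\textbf{Laplace transforms.} By \eqref{eq:Laplace}, the Laplace transform of $m_{a,c}$ is
\[
\widehat m_{a,c}(y)=1-e^{-a}H_{a,c}(y),
\qquad\text{so}\qquad
1-\widehat m_{a,c}=e^{-a}H_{a,c}.
\]
For the kernels in \eqref{eq:kjQ}, Tonelli gives
\[
\widehat j(y)=\int_0^1\dfrac{\dd r}{y+r}=\log\left(1+\dfrac1y\right),
\qquad
\widehat k(y)=\int_0^1\dfrac{r\dd r}{y+r}=1-y\log\left(1+\dfrac1y\right).
\]
Since $\widehat{tk}=-\widehat k\,'$ and $\widehat{1-e^{-t}}=1/(y(y+1))$, we also get
\[
\widehat Q_{a,c}(y)=a\log\left(1+\dfrac1y\right)-\dfrac a{y+1}+\dfrac c{y(y+1)}.
\]

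\textbf{Matching with derivatives of $\log H_{a,c}$.} From $\log H_{a,c}=(ay-c)\log(1+1/y)$ we read off three facts.
\begin{itemize}
\item $\partial_a\log(e^{-a}H_{a,c})=-\widehat k$, hence $\partial_a\widehat m_{a,c}=\widehat k\,(1-\widehat m_{a,c})$.
\item $\partial_c\log(e^{-a}H_{a,c})=-\widehat j$, hence $\partial_c\widehat m_{a,c}=\widehat j\,(1-\widehat m_{a,c})$.
\item $(\log H_{a,c})'=\widehat Q_{a,c}$, by the identity $(ay-c)/(y(y+1))=a/(y+1)-c/(y(y+1))$. Hence $-\widehat m_{a,c}\,'=\widehat Q_{a,c}(1-\widehat m_{a,c})$.
\end{itemize}
Since $\widehat{tm}=-\widehat m\,'$ and products of transforms are transforms of convolutions (all functions involved are in $L^1$; $k,j$ are bounded and $O(1/t)$, and $Q_{a,c}$ is bounded), these are exactly the transforms of \eqref{eq:parameter-identities} and \eqref{eq:Volterra}.

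\textbf{Volterra identity.} For \eqref{eq:Volterra} nothing further is needed. Both sides are continuous functions, $tm_{a,c}$ is locally integrable with Laplace transform converging for $y>0$, and uniqueness plus continuity give the identity at every $t\ge0$.

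\textbf{Parameter identities and differentiability.} Here the main obstacle is justifying the interchange of $\partial_a$ (or $\partial_c$) with the Laplace integral, which would require $L^1$ domination of parameter derivatives of $m_{a,c}$, including the tails. I would avoid this by integrating in the parameter instead of differentiating.
\begin{enumerate}
\item Show that $(a,c)\mapsto m_{a,c}$ is continuous into $L^1(0,\infty)$ and locally uniformly continuous pointwise. This follows from \eqref{eq:D} by dominated convergence, using the same endpoint bounds on $w_{a,c}$ as in Proposition~\ref{prop:density}, which are locally uniform on the open set $a+c<3$, and the Tonelli bound \eqref{eq:remainder-mass}.
\item Fix $c$. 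By the previous step and Fubini, the function $g(t)=\int_{a_0}^{a}(k-k*m_{a',c})(t)\dd a'$ has Laplace transform $\int_{a_0}^a\widehat k(1-\widehat m_{a',c})\dd a'=\widehat m_{a,c}-\widehat m_{a_0,c}$.
\item Uniqueness and continuity give $m_{a,c}(t)-m_{a_0,c}(t)=g(t)$ for every $t$.
\item The integrand $k-k*m_{a',c}$ is jointly continuous in $(a',t)$, since $m$ is $L^1$-continuous and $k$ is bounded and uniformly continuous. The fundamental theorem of calculus then yields $\partial_a m_{a,c}=k-k*m_{a,c}$, with continuity of the derivative locally uniformly in $t$.
\item The $c$-derivative is handled identically with $j$, integrating over $[c_0,c]\subset[0,1)$. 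At $c=0$ this automatically produces the right derivative.
\end{enumerate}
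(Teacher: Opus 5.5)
Your Laplace-side computations are correct ($\widehat k=1-y\log(1+1/y)$, $\widehat j=\log(1+1/y)$, $\widehat Q_{a,c}=(\log H_{a,c})'$). They give the same three product identities the paper uses, and your Volterra argument is fine.

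\textbf{The gap is in Step~1.} At precisely the endpoint the lemma singles out, $c\mapsto m_{a,c}$ is \emph{not} continuous into $L^1(0,\infty)$ at $c=0$. By \eqref{eq:Phi-mass}, $\int_0^\infty m_{a,c}\dd t=1-e^{-a}\kappa(c)$. This equals $1$ for $c>0$ but $1-e^{-a}$ for $c=0$. Hence $\|m_{a,c}-m_{a,0}\|_{L^1}\ge e^{-a}$ for every $c\in(0,1)$: the mass $e^{-a}$ escapes to $t=\infty$ through the slowly decaying tail $e^{-a}\Gamma(c+1)\sin(\pi c)/(\pi t^{c+1})$.

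Your dominated-convergence justification fails for the same reason. Near $s=0$ the Tonelli integrand $|w_{a,c}(s)|(1-s)^2/s$ behaves like $as^{c}+cs^{c-1}$. As $s\downarrow0$, $\sup_{0<c\le\epsilon}cs^{c-1}\asymp 1/(s\log(1/s))$, which is not integrable. So the endpoint bounds of Proposition~\ref{prop:density} are not uniform as $c\downarrow0$. For the $a$-derivative at fixed $c$, and for $c$ in compact subsets of $(0,1)$, Step~1 is true.

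Your Step~4, however, derives joint continuity of $(c',t)\mapsto(j*m_{a,c'})(t)$ from this $L^1$ continuity. So the right $c$-derivative at $c=0$ is not established as written. The lemma asserts it, and Lemma~\ref{lem:loss} and Proposition~\ref{prop:coordinate} later use it. A smaller slip: $j\sim1/t$ and $Q_{a,c}\to c$ are not in $L^1(0,\infty)$. The convolution theorem must be applied in $L^1(e^{-yt}\dd t)$, where boundedness suffices.

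\textbf{Repair.} Your scheme never actually needs global $L^1$ continuity.
\begin{enumerate}
\item $(j*m)(t)$ involves $m$ only on $[0,t]$. So joint continuity follows from uniform convergence $m_{a,c'}\to m_{a,c}$ on compact $t$-intervals, which the second half of your Step~1 claims. This does hold up to $c=0$: for $t\le T$ the integrand in \eqref{eq:D} is dominated by $C_T|w_{a,c'}(s)|(1-s)^2$, with no factor $1/s$, and $|w_{a,c'}|\le1/\pi$ on $(0,1/2]$.
\item For Fubini in Step~2 it suffices that $\sup_{0\le c'\le c_1}\|m_{a,c'}\|_{L^1}<\infty$. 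This holds because $\int_0^\eta c's^{c'-1}\dd s=\eta^{c'}\le1$; alternatively, work with weighted norms throughout.
\end{enumerate}
With these changes your parameter-integration and fundamental-theorem argument proves the lemma.

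\textbf{Comparison with the paper.} The paper avoids the issue by expanding $m_{a,c}=\sum_{n\ge1}(-1)^{n+1}g_{a,c}^{*n}/n!$ with $g_{a,c}=ak+cj$. Its terms are polynomials in $(a,c)$, and by $|g_{a,c}^{*n}(t)|\le M^nt^{n-1}/(n-1)!$ it converges with all parameter derivatives locally uniformly on $[0,T]$. This gives smoothness, indeed analyticity, straight through $c=0$ with no tail considerations; the identities are then read off on the transform side. Your route, once repaired, yields the $C^1$ statement of the lemma but not the analyticity that Proposition~\ref{prop:continuation} later draws from it.
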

\begin{proof}
Set $L(y)=\log(1+1/y)$ and $u(y)=1-yL(y)$.
The normalized Laplace representation is
\begin{equation}\label{eq:mhat}
\widehat m_{a,c}(y)=\int_0^\infty e^{-yt}m_{a,c}(t)\dd t
=1-\exp[-au(y)-cL(y)],\qquad y>0.
\end{equation}
Fubini's theorem in \eqref{eq:kjQ} gives
\[
\widehat k(y)=\int_0^1\dfrac r{y+r}\dd r=u(y),
\qquad
\widehat j(y)=\int_0^1\dfrac1{y+r}\dd r=L(y).
\]
To justify parameter differentiation without an endpoint difficulty at $c=0$, put $g_{a,c}=ak+cj$ and use the locally convergent series
\begin{equation}\label{eq:m-convolution-series}
m_{a,c}(t)=\sum_{n=1}^\infty\dfrac{(-1)^{n+1}}{n!}g_{a,c}^{*n}(t).
\end{equation}
Indeed, on $0\le t\le T$ and any compact parameter set, $|g_{a,c}(t)|\le M$ for a fixed $M$, and
\[
|g_{a,c}^{*n}(t)|\le M^n\dfrac{t^{n-1}}{(n-1)!}.
\]
The series and its parameter derivatives therefore converge locally uniformly.
It also converges absolutely in every weighted space $L^1(e^{-yt}\dd t)$, because
\[
\sum_{n=1}^\infty\dfrac{\|e^{-y\cdot}g_{a,c}\|_1^n}{n!}
=\exp[au(y)+cL(y)]-1<\infty.
\]
Its Laplace transform is the right-hand side of \eqref{eq:mhat}; uniqueness identifies it with the contour density.
The same local argument even gives analytic dependence on the parameters.
Differentiating \eqref{eq:mhat} now yields
\[
\partial_a\widehat m=u(1-\widehat m),\qquad
\partial_c\widehat m=L(1-\widehat m).
\]
These are the Laplace transforms of \eqref{eq:parameter-identities}.
Finally,
\[
-\partial_y\widehat m=(1-\widehat m)(-au'-cL'),
\qquad\widehat Q=-au'-cL',
\]
which proves \eqref{eq:Volterra}.
All convolution identities hold first in a weighted $L^1$ space and then pointwise by continuity.
No unweighted integrability of $\partial_c m$ at $c=0$ is used.
\end{proof}

\subsection{Strict parameter derivatives at every nonnegative zero}
\begin{lemma}[Direction of loss of positivity]\label{lem:loss}
Suppose $(a,c)\in\Omega$, $m_{a,c}(t)\ge0$ for all $t\ge0$, and $m_{a,c}(t_0)=0$ at some $t_0>0$.
Then
\begin{equation}\label{eq:negative-parameters}
\partial_a m_{a,c}(t_0)<0,\qquad\partial_c m_{a,c}(t_0)<0.
\end{equation}
\end{lemma}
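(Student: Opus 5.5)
The plan is to combine the Volterra identity \eqref{eq:Volterra} at the zero $t_0$ with a monotone-ratio comparison between the kernels $k,j$ and $Q_{a,c}$. By Lemma~\ref{lem:convolution}, the claim \eqref{eq:negative-parameters} is equivalent to the two strict inequalities
\[
(k*m_{a,c})(t_0)>k(t_0),\qquad (j*m_{a,c})(t_0)>j(t_0).
\]
Evaluating \eqref{eq:Volterra} at $t_0$ and using $m_{a,c}(t_0)=0$ gives the exact balance
\[
\int_0^{t_0}Q_{a,c}(t_0-s)\,m_{a,c}(s)\dd s=Q_{a,c}(t_0)>0.
\]
The idea is to transfer this balance from the kernel $Q_{a,c}$ to the kernels $k$ and $j$ by a pointwise comparison.

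\proofstep{Key monotonicity step.}
I will show that $x\mapsto Q_{a,c}(x)/k(x)$ and $x\mapsto Q_{a,c}(x)/j(x)$ are strictly increasing on $(0,\infty)$. By \eqref{eq:kjQ},
\[
\dfrac{Q_{a,c}}{k}=x\left(a+c\,\dfrac jk\right),\qquad
\dfrac{Q_{a,c}}{j}=a\,\dfrac{xk}{j}+cx.
\]
The quotient $k/j$ is the mean of $r$ under the probability density proportional to $e^{-rx}$ on $[0,1]$. This mean is nonincreasing in $x$, since its derivative is minus a variance. Hence $j/k$ is nondecreasing, and multiplying by $x$ makes $Q_{a,c}/k$ strictly increasing because $a>0$.

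For the second ratio I use the explicit form
\[
\dfrac{xk(x)}{j(x)}=-x\dfrac{\dd}{\dd x}\log\dfrac{1-e^{-x}}x=1-\dfrac x{e^x-1}.
\]
This is strictly increasing, since $x/(e^x-1)$ is strictly decreasing. Therefore $Q_{a,c}/j$ is strictly increasing, including the case $c=0$, because $a>0$.

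\proofstep{Comparison.}
The monotonicity gives, for $0<s\le t_0$,
\[
k(t_0-s)\,Q_{a,c}(t_0)>k(t_0)\,Q_{a,c}(t_0-s).
\]
At $s=t_0$ this holds trivially, since $Q_{a,c}(0)=0$ while $k(0)=1/2>0$. Multiply by $m_{a,c}(s)\ge0$ and integrate over $[0,t_0]$. By \eqref{eq:m0} and continuity, $m_{a,c}>0$ on some interval $[0,\sigma)$, so the integrated inequality is strict. This yields
\[
Q_{a,c}(t_0)\,(k*m_{a,c})(t_0)>k(t_0)\,Q_{a,c}(t_0).
\]
Dividing by $Q_{a,c}(t_0)>0$ gives the $a$-inequality. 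The identical argument with $j$ in place of $k$ gives the $c$-inequality, using $j(0)=1>0$.

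\proofstep{Main obstacle.}
The main difficulty is identifying the right comparison kernel, namely $Q_{a,c}$ from \eqref{eq:Volterra}, and verifying the two ratio monotonicities. If the variance argument for $j/k$ feels too loose, I would instead check monotonicity directly from $j/k=t(1-e^{-t})/(1-(1+t)e^{-t})$. Everything else reduces to positivity of $m_{a,c}$ near $0$ and to the identities of Lemma~\ref{lem:convolution}.
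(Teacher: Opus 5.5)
Your proposal is correct and follows essentially the same route as the paper: the Volterra identity at $t_0$, strict monotonicity of $Q_{a,c}/k$ and $Q_{a,c}/j$ (via the variance argument for $k/j$ and the explicit form $1-t/(e^t-1)$), and integration of the pointwise kernel comparison against $m_{a,c}$, with strictness from $m_{a,c}(0)>0$. Your treatment of the endpoint $s=t_0$ and of the case $c=0$ is also sound.
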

\begin{proof}
Suppress the parameter subscripts.
The function $k$ is positive and strictly decreasing by its integral expression in \eqref{eq:kjQ}.
Consequently
\[
\dfrac{Q(t)}{k(t)}=at+c\dfrac{1-e^{-t}}{k(t)}.
\]
We justify the monotonicity of the second term explicitly. Since
\[
j(t)=\int_0^1 e^{-rt}\,\dd r,
\qquad
k(t)=\int_0^1 r e^{-rt}\,\dd r,
\]
the quotient $k(t)/j(t)$ is the expectation of $r$ under the probability measure
\[
\frac{e^{-rt}}{j(t)}\,\dd r
\qquad (0\le r\le1).
\]
Differentiation gives
\[
\left(\frac{k}{j}\right)'(t)
=-\operatorname{Var}_t(r)<0,
\]
because this probability measure is not concentrated at one point. Hence $j/k$ is strictly increasing. Since
\[
\frac{1-e^{-t}}{k(t)}=t\frac{j(t)}{k(t)},
\]
the second term above is nondecreasing (and strictly increasing when $c>0$); together with the strictly increasing term $at$, this proves that $Q/k$ is strictly increasing on $(0,\infty)$.
Moreover,
\begin{equation}\label{eq:Qj}
\dfrac{Q(t)}{j(t)}=a\left(1-\dfrac t{e^t-1}\right)+ct
\end{equation}
is strictly increasing, since
\[
\dfrac{\mathrm{d}}{\mathrm{d}t}\left(1-\dfrac t{e^t-1}\right)
=\dfrac{(t-1)e^t+1}{(e^t-1)^2}>0.
\]
The numerator is positive because it vanishes at zero and has derivative $te^t>0$.
Thus $p/Q$ is strictly decreasing for either choice $p=k$ or $p=j$.
At $t=t_0$, the Volterra identity gives
\begin{equation}\label{eq:Volterra-zero}
Q(t_0)=\int_0^{t_0}Q(t_0-s)m(s)\dd s.
\end{equation}
For $0<s<t_0$,
\[
p(t_0-s)>\dfrac{p(t_0)}{Q(t_0)}Q(t_0-s).
\]
Integration against $m(s)\dd s$ and \eqref{eq:Volterra-zero} give $(p*m)(t_0)>p(t_0)$.
Strictness follows from \eqref{eq:m0}: $m$ is positive on an interval next to zero.
The identities in Lemma~\ref{lem:convolution} now prove both inequalities in \eqref{eq:negative-parameters}.
\end{proof}

\subsection{Local inward perturbations and exclusion of gaps}
\begin{lemma}[Local positivity under a parameter decrease]\label{lem:inward}
Let $(a,c)\in\Omega$ and suppose $m_{a,c}\ge0$ on $[0,\infty)$.
For all sufficiently small $h>0$,
\begin{equation}\label{eq:a-decrease}
m_{a-h,c}(t)>0\qquad(t\ge0).
\end{equation}
If $c>0$, then also, for all sufficiently small $h>0$,
\begin{equation}\label{eq:c-decrease}
m_{a,c-h}(t)>0\qquad(t\ge0).
\end{equation}
If the original density is strictly positive everywhere, this positivity persists under sufficiently small changes of $a$ with $c$ fixed.
For $c>0$, it also persists under sufficiently small joint changes of $(a,c)$.
\end{lemma}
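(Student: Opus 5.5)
The plan is a compactness argument that splits $[0,\infty)$ into a uniform tail, a neighbourhood of the zero set of $m_{a,c}$, and the remaining compact part. Near the zeros, Lemma~\ref{lem:loss} supplies the decisive sign.

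\textbf{Tail.} First fix a compact parameter set on which Lemma~\ref{lem:tails} applies uniformly. For the $a$-decrease with $c$ fixed, take a compact interval $I\subset(0,3-c)$ containing $a$ in its interior; this is possible because $a>0$ and $a+c<3$. For $c>0$, take instead a compact neighbourhood $K$ of $(a,c)$ inside the interior domain \eqref{eq:interior-domain}. This $K$ also serves the $c$-decrease and joint perturbations, since $c-h$ stays bounded away from $0$ for small $h$. Lemma~\ref{lem:tails} then gives a single $T$ such that $m_{a',c'}(t)>0$ for all $t\ge T$ and all parameters in the chosen set. Only $[0,T]$ remains.

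\textbf{Neighbourhood of the zero set.} Let $Z=\{t\in[0,T]:m_{a,c}(t)=0\}$. This set is compact, and by \eqref{eq:m0} it lies in $(0,T)$. At each $t_0\in Z$, Lemma~\ref{lem:loss} gives $\partial_a m_{a,c}(t_0)<0$, and also $\partial_c m_{a,c}(t_0)<0$. By Lemma~\ref{lem:convolution}, these parameter derivatives are jointly continuous in $(t,a,c)$, locally uniformly on compact $t$-intervals. Since $Z$ is compact, there are an open set $N\supset Z$, a constant $\gamma>0$ and an $h_0>0$ such that
\[
\partial_a m_{a',c}(t)\le-\gamma\qquad (t\in\overline N,\ a-h_0\le a'\le a).
\]
The mean value theorem in the parameter then gives, for $t\in\overline N$ and $0<h\le h_0$,
\[
m_{a-h,c}(t)=m_{a,c}(t)-\int_{a-h}^{a}\partial_a m_{a',c}(t)\dd a'\ge m_{a,c}(t)+\gamma h\ge\gamma h>0.
\]
The $c$-decrease is handled identically, using $\partial_c m$ on $[c-h_0,c]\subset(0,1)$.

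\textbf{Remaining compact part.} On $[0,T]\setminus N$, which is compact, $m_{a,c}$ has a positive minimum $\mu$. Uniform continuity of $m$ in $(t,a,c)$ on compact sets gives $m_{a',c'}>\mu/2$ there for parameters close enough to $(a,c)$. Combining the three regions proves \eqref{eq:a-decrease} and \eqref{eq:c-decrease}.

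\textbf{Strictly positive case.} If $m_{a,c}>0$ everywhere, then $Z=\emptyset$ and the last two steps reduce to the minimum argument on $[0,T]$. The perturbations may then go in either direction. With $c$ fixed, $I$ handles changes of $a$; for $c>0$, the joint compact set $K$ gives joint uniformity of both the tail and the continuity estimate. This is exactly why the joint statement is restricted to $c>0$: Lemma~\ref{lem:tails} asserts no joint uniformity at $c=0$.

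\textbf{Main obstacle.} The delicate point is ensuring that every estimate is uniform in the perturbation simultaneously. Specifically, the tail threshold $T$ and the neighbourhood $N$ with its bound $\gamma$ must be chosen before $h$. This is done by fixing the compact parameter sets first, and it is the reason for choosing $I$ and $K$ at the outset.
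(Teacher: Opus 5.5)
Your proposal is correct and follows essentially the same route as the paper. You use uniform positive tails from Lemma~\ref{lem:tails}, a neighbourhood of the zeros where Lemma~\ref{lem:loss} and continuity of the parameter derivatives give a derivative bound $\le-\gamma$, and a positive minimum on the compact remainder that persists by continuity. The only differences are minor and harmless: you absorb the origin into the compact remainder via $m_{a,c}(0)>0$ instead of isolating a uniform interval $[0,\varepsilon]$, and you work with the compact zero set directly instead of using analyticity to make it finite, which is slightly more economical.
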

\begin{proof}
First make the positivity near the origin uniform in the parameters.
Choose a compact parameter neighborhood $N\subset\Omega$ containing all sufficiently small perturbations under consideration; when $c=0$, take $N$ one-sided in the $c$-variable.
By \eqref{eq:m0},
\[
\mu_0:=\min_{(r,d)\in N}\left(\dfrac r2+d\right)>0.
\]
Lemma~\ref{lem:convolution} gives joint continuity of $m_{r,d}(t)$ on compact parameter sets, locally uniformly in $t$.
After shrinking $N$ if necessary, there exists $\varepsilon>0$ such that
\[
m_{r,d}(t)\ge\dfrac{\mu_0}{2}>0,
\qquad (r,d)\in N,\quad0\le t\le\varepsilon.
\]
Thus positivity on a whole interval next to $t=0$, not merely at the single point $t=0$, is uniform under the permitted perturbations.

At the other end, the tail estimates of Lemma~\ref{lem:tails} are positive uniformly in the same parameter neighborhoods.
For changes of $a$ with $c=0$, use the fixed-$c$ estimate.
For changes of $c>0$, choose $N$ entirely inside $0<c<1$, where the joint estimate applies.
Increasing $T$ if necessary, these observations reduce every remaining sign question to the fixed compact interval $[\varepsilon,T]\subset(0,\infty)$.

The zeros of the original density in that interval are finite in number, because the density is entire and is not identically zero.
If there are no zeros, compactness and continuity prove the stated stability.
Otherwise, choose small neighborhoods of all zeros.
Lemma~\ref{lem:loss} and continuity of the parameter derivatives show that, on these neighborhoods and for parameters sufficiently close to the original ones, the relevant parameter derivative is at most $-\gamma$ for some $\gamma>0$.
For instance,
\[
m_{a-h,c}(t)=m_{a,c}(t)-\int_{a-h}^a\partial_r m_{r,c}(t)\dd r
\ge m_{a,c}(t)+\gamma h>0.
\]
The same argument applies to a decrease of $c$.
On the compact complement of the zero neighborhoods, the original density has a positive minimum, which persists by continuity.
Combining these estimates with the origin and tail estimates proves the lemma.
\end{proof}

\begin{proposition}[Coordinatewise decrease of parameters]\label{prop:coordinate}
If $a>0$, $0\le c<1$ and $H_{a,c}\in\BF$, then
\begin{equation}\label{eq:coordinate}
H_{a',c'}\in\BF\qquad\text{whenever}\qquad0<a'\le a,\quad0\le c'\le c.
\end{equation}
If at least one inequality is strict, then $m_{a',c'}(t)>0$ for every $t\ge0$.
\end{proposition}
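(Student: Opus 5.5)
Decrease one coordinate at a time with a continuity (connectedness) argument on the parameter segment. By Remark~\ref{rem:coverage}, $H_{a,c}\in\BF$ forces $a+c<3$, so $(a,c)\in\Omega$. Every pair $(a',c')$ with $a'\le a$, $c'\le c$ then also lies in $\Omega$. By Proposition~\ref{prop:density}, membership in $\BF$ is equivalent to $m_{\cdot,\cdot}\ge0$ on $[0,\infty)$. We first go from $(a,c)$ to $(a,c')$ by decreasing $c$, and then from $(a,c')$ to $(a',c')$ by decreasing $a$. It therefore suffices to prove two one-dimensional statements:
\begin{itemize}[leftmargin=2em]
\item[(i)] if $m_{a,c}\ge0$ and $0<a'<a$, then $m_{a',c}>0$ everywhere;
\item[(ii)] if $m_{a,c}\ge0$ and $0\le c'<c$, then $m_{a,c'}>0$ everywhere.
\end{itemize}
Strict positivity in the final conclusion then follows: whichever step is a strict decrease produces a strictly positive density. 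If the other step is trivial or comes afterwards, strict positivity is preserved, because (i) and (ii) applied to a nonnegative (in particular positive) density again give positivity.

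For (i), set $S=\{r\in[a',a]: m_{r,c}(t)>0\ \text{for all } t\ge0\}$ and $r_*=\inf\{r\in[a',a]: [r,a)\subset S\}$. Lemma~\ref{lem:inward} applied at $(a,c)$ gives $[a-h,a)\subset S$ for small $h$, so this set is nonempty. Suppose $r_*>a'$. Pointwise limits give $m_{r_*,c}\ge0$. If $m_{r_*,c}$ were strictly positive everywhere, the stability clause of Lemma~\ref{lem:inward} (small changes of $a$ with $c$ fixed) would give positivity on a neighbourhood of $r_*$, contradicting minimality. So $m_{r_*,c}$ has a zero $t_0$, and $t_0>0$ by \eqref{eq:m0}. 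Lemma~\ref{lem:loss} then gives $\partial_a m_{r_*,c}(t_0)<0$. Hence $r\mapsto m_{r,c}(t_0)$ is strictly decreasing near $r_*$, and $m_{r,c}(t_0)<0$ for $r$ slightly larger than $r_*$. Those $r$ lie in $S$, which is a contradiction. Therefore $r_*=a'$ (or $r_*<a'$), and the same argument at $r=a'$ itself shows $a'\in S$, not merely $m_{a',c}\ge0$.

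Case (ii) is identical with $c$ in place of $a$, using the $c$-part of Lemma~\ref{lem:inward} for the initial step (valid since $c>0$) and $\partial_c m<0$ from Lemma~\ref{lem:loss}. The endpoint $c'=0$ needs separate care. There the tail estimates are not jointly uniform, and the stability clause of Lemma~\ref{lem:inward} for joint changes requires $c>0$. However, we only approach $0$ from the right, and the argument at the infimum $d_*$ uses only the one-sided derivative $\partial_c m$ at $d_*$, which Lemma~\ref{lem:convolution} provides. If $d_*=0$ has a zero, the same monotonicity contradiction with points $d>0$ applies. If $d_*>0$ is a strictly positive configuration, stability within $c>0$ applies.

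The main obstacle is the limit point itself: showing that at $r_*$ the density is either strictly positive (so it can be extended) or has a positive zero where the strict derivative sign gives a contradiction. This needs continuity of $m_{r,c}(t_0)$ and of the parameter derivatives at a fixed $t_0$, which Lemma~\ref{lem:convolution} supplies. It also needs the observation that no zero can escape to $t=0$ or $t=\infty$; this is handled inside Lemma~\ref{lem:inward} by \eqref{eq:m0} and Lemma~\ref{lem:tails}. I would check carefully that the one-sided $c=0$ case only invokes the fixed-$c$ tail estimates.
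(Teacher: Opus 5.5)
Your proof is correct and follows essentially the paper's route: you decrease one coordinate at a time with a connectedness argument on the parameter segment, driven by Lemma~\ref{lem:inward} and Lemma~\ref{lem:loss}, and you handle the endpoint $c'=0$ through the right $c$-derivative. The only difference is organization: you run the continuation on the set of strictly positive parameters, so admissibility and strict positivity come out together, whereas the paper first excludes gaps via the minimum of the closed admissible set and only then derives strict positivity from Lemma~\ref{lem:loss} and the monotonicity already established.
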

\begin{proof}
The set of admissible parameters is relatively closed in $(0,\infty)\times[0,1)$: for each fixed $y>0$ and each derivative order, the derivatives of $H_{a,c}(y)$ depend continuously on $(a,c)$, and all Bernstein sign inequalities pass to the limit.
Every admissible pair satisfies $a+c<K_0<3$, so decreasing a coordinate stays inside the representation domain.

Fix one coordinate and vary the other.
Suppose a larger value $q$ is admissible but a smaller positive value $p$ is not.
The closed nonempty set of admissible values in $[p,q]$ has a minimum $r>p$.
Lemma~\ref{lem:inward} gives admissible values immediately to the left of $r$, contradicting its minimality.
For the $c$-coordinate this argument first applies to $0<p<q$; passage to $p=0$ is justified by relative closedness.
This proves \eqref{eq:coordinate} by decreasing the two coordinates successively.

For strict positivity, suppose a coordinate has been strictly decreased and the resulting admissible density has a zero.
By Lemma~\ref{lem:loss}, a sufficiently small increase of that coordinate makes the density negative at the zero.
But the increased parameter still lies below the original admissible coordinate, so \eqref{eq:coordinate} says that it must remain admissible.
This is a contradiction.
The argument also applies at $c'=0$, using the right derivative there; no tail estimate uniform across $c=0$ is needed.
\end{proof}

The content of Proposition~\ref{prop:coordinate} is specific to this family.
It is not an inference that arbitrary products, quotients or parameter changes preserve the Bernstein class.
In the original variables, increasing $b$ towards zero corresponds to decreasing $c$.

\section{The optimal boundary}\label{sec:boundary}
We now distinguish an optimal boundary from the explicit sufficient line.
The interval property is proved first, rather than assumed in the definition of a supremum.

\begin{theorem}[The exact boundary as a monotone graph]\label{thm:boundary}
There is a unique continuous, strictly increasing function
\[
A:(-1,0]\longrightarrow(0,\astar]
\]
such that, for all $a>0$ and $b\in\R$,
\begin{equation}\label{eq:boundary}
F_{a,b}\in\BF\quad\Longleftrightarrow\quad-1<b\le0
\quad\text{and}\quad0<a\le A(b).
\end{equation}
It satisfies
\begin{equation}\label{eq:A-endpoints}
A(0)=\astar,\qquad\lim_{b\downarrow-1}A(b)=0.
\end{equation}
For $-1<b<0$,
\begin{equation}\label{eq:A-bounds}
\astar(1+b)<A(b)<\min\{\astar,U(b)\}.
\end{equation}
For a fixed $b\in(-1,0]$, the density is strictly positive at every $t\ge0$ whenever $0<a<A(b)$.
At $a=A(b)$ it is nonnegative and has a nonempty finite set of zeros in $(0,\infty)$.
\end{theorem}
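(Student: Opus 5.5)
We work throughout in the normalized variables $c=-b\in[0,1)$. For each fixed $c$ let $S_c=\{a>0:H_{a,c}\in\BF\}$ and define $A(-c)=\sup S_c$.

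\textbf{Shape of $S_c$.} By Lemma~\ref{lem:b-range} and \eqref{eq:scaling}, admissibility forces $-1<b\le0$. Theorem~\ref{thm:linear} shows that $S_c$ is nonempty and contains $(0,\astar(1-c)]$. Proposition~\ref{prop:necessary} shows that $S_c$ is bounded by $U(-c)$. Proposition~\ref{prop:coordinate} shows that $S_c$ is an initial interval. The relative closedness used in that proof shows that $S_c$ contains its supremum. Hence $S_c=(0,A(-c)]$, which gives \eqref{eq:boundary} and uniqueness of $A$.

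\textbf{Bounds and strict monotonicity.} At $c=0$ we get $A(0)=\astar$ from Proposition~\ref{prop:threshold}. The strict lower bound $A(b)>\astar(1+b)$ is Proposition~\ref{prop:extension}. The strict upper bound against $U(b)$ is immediate, because the necessary bounds \eqref{eq:necessary-bounds} are strict.

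For strict monotonicity, take $0\le c'<c$ and put $a=A(-c)$. Proposition~\ref{prop:coordinate} gives $m_{a,c'}>0$ everywhere. Lemma~\ref{lem:inward}, in the form asserting persistence of strict positivity under small changes of $a$, then gives admissibility of $m_{a+h,c'}$ for some $h>0$. Hence $A(-c')>A(-c)$. In particular $A(b)<A(0)=\astar$ for $b<0$. The limit as $b\downarrow-1$ follows from $U(b)\to0$.

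\textbf{Boundary zeros.} At $a=A(b)$ the density is nonnegative, since the endpoint is admissible. It must have a zero on $(0,\infty)$. Otherwise it is strictly positive, and the persistence clause of Lemma~\ref{lem:inward} (which uses the tails of Lemma~\ref{lem:tails} and $m(0)>0$) yields admissible $a$ slightly above $A(b)$, a contradiction. The zero set is finite: the tails are positive for large $t$ and $m(0)>0$, so the zeros lie in a compact interval $[\varepsilon,T]$, and $m$ is entire and not identically zero. For $0<a<A(b)$, strict positivity is the second clause of Proposition~\ref{prop:coordinate}.

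\textbf{Continuity (main obstacle).} Monotonicity leaves only jump discontinuities to exclude.

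A downward jump from the left at $b_0$ would mean $\lim_{b\uparrow b_0}A(b)<A(b_0)$. This is impossible. Take $a$ strictly between the two values, so that $a<A(b_0)$. Then $m_{a,-b_0}>0$ strictly, and joint persistence for $c>0$ (Lemma~\ref{lem:inward}) makes $(a,-b)$ admissible for $b$ near $b_0$, giving $A(b)\ge a$. At $b_0=0$ (that is, $c=0$) this joint persistence is not available. There we instead apply the same argument at a fixed $c$ slightly positive, or use the lower bound $A(b)\ge\astar(1+b)\to\astar$, which handles $b_0=0$ directly.

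A jump on the right side, $\lim_{b\downarrow b_0}A(b)>A(b_0)$, is excluded by relative closedness: the limit points $(A(b),b)$ with $b\downarrow b_0$ are admissible, so their limit is admissible, forcing $A(b_0)$ to be at least that limit.

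The delicate step is the joint-perturbation argument. It needs the tail estimate of Lemma~\ref{lem:tails} to be uniform on compact sets with $c>0$, and uniformity fails as $c\to0$. At $c=0$ I would rely on the explicit lower bound $\astar(1+b)$ rather than on joint perturbation.
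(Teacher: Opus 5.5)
Your proposal is correct and follows the paper's proof essentially step by step: the supremum is attained via Proposition~\ref{prop:coordinate} and relative closedness, strict monotonicity comes from fixed-$c$ persistence in Lemma~\ref{lem:inward}, boundary zeros follow from the same contradiction, and $A(b)\to0$ follows from $U(b)\to0$. Your continuity argument only recasts the paper's $\limsup$/$\liminf$ argument as the exclusion of one-sided jumps of a monotone function, using the same ingredients (relative closedness on one side, joint persistence for $c>0$ on the other, and the sandwich $\astar(1+b)\le A(b)\le\astar$ at $b=0$); keep that sandwich at $b=0$ rather than the vaguer ``fixed $c$ slightly positive'' alternative.
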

\begin{proof}
\proofstep{Existence of an attained threshold.}
For $0\le c<1$, let
\[
S_c=\{a>0:H_{a,c}\in\BF\}.
\]
Theorem~\ref{thm:linear} shows that $S_c$ is nonempty.
The necessary bounds show that it is bounded above, and Proposition~\ref{prop:coordinate} shows that it is an initial interval.
Define $T(c)=\sup S_c$.
Since $T(c)>0$, relative closedness gives $T(c)\in S_c$.
Consequently,
\begin{equation}\label{eq:T-attained}
S_c=(0,T(c)],\qquad T(c)+c<K_0<3.
\end{equation}
The exact one-parameter result gives $T(0)=\astar$.
Set $A(b)=T(-b)$.
Lemma~\ref{lem:b-range} and \eqref{eq:scaling} then prove \eqref{eq:boundary}.
The endpoint of a given interval is unique.

\proofstep{Contact with zero and strict positivity below the boundary.}
Proposition~\ref{prop:coordinate} gives strict positivity when $a<T(c)$.
At $a=T(c)$, if the density were strictly positive everywhere, Lemma~\ref{lem:inward} would preserve positivity for slightly larger $a$, a contradiction.
It must therefore have a zero.
Its positivity at the origin, eventual positivity, and analyticity show that the zero set in $[0,\infty)$ is finite and contained in $(0,\infty)$.

\proofstep{Strict monotonicity.}
Let $0\le c_1<c_2<1$ and take $a=T(c_2)$.
Decreasing $c_2$ to $c_1$ gives an everywhere positive density by Proposition~\ref{prop:coordinate}.
The fixed-$c_1$ stability in Lemma~\ref{lem:inward} permits a further small increase of $a$.
Thus $T(c_1)>T(c_2)$, proving that $A$ is strictly increasing.
In particular, $A(b)<\astar$ for $b<0$.

\proofstep{Continuity in the interior.}
Fix $c\in(0,1)$ and let $c_n\to c$.
The sufficient bound $T(c_n)\ge\astar(1-c_n)$ bounds these thresholds away from zero, and the necessary bound bounds them above.
Every convergent subsequence $T(c_{n_j})\to t$ has an admissible limiting pair $(t,c)$ by relative closedness.
Hence $t\le T(c)$, proving $\limsup T(c_n)\le T(c)$.
Conversely, fix $0<a<T(c)$.
Its density is strictly positive everywhere.
The joint tail estimate and compact-interval stability in Lemma~\ref{lem:inward} make $(a,c_n)$ admissible for all sufficiently large $n$.
Therefore $\liminf T(c_n)\ge a$.
Letting $a\uparrow T(c)$ proves continuity at $c$.

\proofstep{Endpoint continuity and quantitative envelopes.}
For $c\downarrow0$,
\[
\astar(1-c)\le T(c)\le T(0)=\astar,
\]
so $T(c)\to\astar$.
This argument avoids any assertion of a tail estimate uniform across $c=0$.
As $c\uparrow1$, the quotient bound gives
\[
0<T(c)<2(1-c)+2\sqrt{1-c}\longrightarrow0.
\]
The necessary bound, applied to the attained admissible pair $(T(c),c)$, gives $A(b)<U(b)$.
Proposition~\ref{prop:extension} gives $A(b)>\astar(1+b)$ for $b<0$.
All claims follow.
\end{proof}

\subsection{A necessary and sufficient zero-contact criterion}
\begin{corollary}[Global positivity and contact characterize the boundary]\label{cor:contact}
Let $a>0$, $-1<b\le0$ and $a-b<3$.
Then $a=A(b)$ if and only if
\begin{equation}\label{eq:contact}
D_{a,-b}(t)\ge0\quad\text{for all }t\ge0,
\qquad D_{a,-b}(t_b)=0\quad\text{for some }t_b>0.
\end{equation}
Every such zero has finite even order at least two; in particular,
\begin{equation}\label{eq:tangent}
\partial_tD_{a,-b}(t_b)=0,\qquad\partial_t^2D_{a,-b}(t_b)\ge0.
\end{equation}
At a contact point, the parameter derivatives satisfy
\begin{equation}\label{eq:D-param-contact}
\partial_aD_{a,c}(t_b)<0,\qquad\partial_cD_{a,c}(t_b)<0,
\qquad c=-b.
\end{equation}
\end{corollary}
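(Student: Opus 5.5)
We obtain the characterization by working at fixed $c=-b\in[0,1)$ with the interval description $S_c=(0,T(c)]$ from Theorem~\ref{thm:boundary}. Recall that $D_{a,c}=e^{t}m_{a,c}$, so $D_{a,c}$ and $m_{a,c}$ have the same sign and the same zeros in $t$. By Proposition~\ref{prop:density}, nonnegativity of $D_{a,c}$ on $[0,\infty)$ is equivalent to $a\in S_c$; the hypothesis $a-b<3$ ensures the density is defined.

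\emph{Forward direction.} Suppose $a=A(b)=T(c)$. Then $a\in S_c$, so $D_{a,c}\ge0$. The last assertion of Theorem~\ref{thm:boundary} supplies a zero $t_b>0$.

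\emph{Converse.} Assume \eqref{eq:contact}. Global nonnegativity gives $a\in S_c$, hence $a\le T(c)$. Suppose $a<T(c)$. Then Theorem~\ref{thm:boundary}, or directly Proposition~\ref{prop:coordinate} with a strict decrease of $a$ from $T(c)$, gives $m_{a,c}>0$ everywhere. This contradicts $D_{a,c}(t_b)=0$, so $a=T(c)=A(b)$.

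\emph{Order of the zero.} $D_{a,c}$ is entire in $t$. It is not identically zero, since $D_{a,c}(0)=q_1>0$ by \eqref{eq:m0}. Hence $t_b$ is a zero of some finite order $k\ge1$. Near $t_b$ we have $D_{a,c}(t)\sim\gamma(t-t_b)^k$ with $\gamma\ne0$. Because $t_b$ is an interior point of $[0,\infty)$ and $D_{a,c}\ge0$ on both sides of it, $k$ must be even and $\gamma>0$. Therefore $k\ge2$, which gives $\partial_tD_{a,c}(t_b)=0$. Moreover $\partial_t^2D_{a,c}(t_b)$ equals $2\gamma>0$ if $k=2$ and $0$ if $k\ge4$; in either case it is nonnegative, proving \eqref{eq:tangent}.

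\emph{Parameter derivatives.} Since $t_b>0$ is a zero of a nonnegative density, Lemma~\ref{lem:loss} gives $\partial_am_{a,c}(t_b)<0$ and $\partial_cm_{a,c}(t_b)<0$; at $c=0$ the $c$-derivative is the right derivative, as in Lemma~\ref{lem:convolution}. Because $D_{a,c}(t)=e^{t}m_{a,c}(t)$ and the factor $e^{t}$ does not depend on $(a,c)$, we have $\partial_aD_{a,c}(t_b)=e^{t_b}\partial_am_{a,c}(t_b)$, and likewise for $\partial_c$. This yields \eqref{eq:D-param-contact}.

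There is no real obstacle in this argument. The only point needing care is that the converse relies on strict positivity below the threshold, which is exactly what Proposition~\ref{prop:coordinate} provides; tangency alone, without global nonnegativity, would not suffice.
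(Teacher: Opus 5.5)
Your proposal is correct and follows the paper's own proof essentially step by step. The forward direction comes from Theorem~\ref{thm:boundary}, and the converse combines the Bernstein criterion with strict positivity below the threshold. The even order of the zero follows from real-analyticity and nonnegativity, and the parameter-derivative signs follow from Lemma~\ref{lem:loss} via $D_{a,c}=e^{t}m_{a,c}$. You only spell out the finite-order and $\partial_t^2$ details that the paper leaves implicit.
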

\begin{proof}
The forward implication follows from Theorem~\ref{thm:boundary}.
Conversely, global nonnegativity in \eqref{eq:contact} gives $a\le A(b)$ by the Bernstein criterion.
The existence of a zero excludes $a<A(b)$ by the strict-positivity assertion of that theorem.
Thus $a=A(b)$.
A zero of a nonnegative, nonzero real-analytic function at an interior point has finite even order.
Finally, $D=e^tm$, so Lemma~\ref{lem:loss} gives \eqref{eq:D-param-contact}.
\end{proof}

\begin{remark}[Scope of the implicit characterization]\label{rem:implicit}
The system $D_{a,-b}(t)=\partial_tD_{a,-b}(t)=0$ provides candidate contact points, but it is not sufficient on its own.
The global inequality $D_{a,-b}\ge0$ in \eqref{eq:contact} is essential.
A tangency can occur while the density is negative elsewhere.
The theorem does not assert uniqueness of the contact point, exact multiplicity two, or differentiability of $A$.
Thus \eqref{eq:boundary} and \eqref{eq:contact} are an exact implicit description, not an elementary closed formula or a certified numerical evaluation of the boundary.
\end{remark}

\subsection{Variational characterization and numerical evaluation of the boundary}\label{subsec:numerical-boundary}
The zero-contact criterion also gives a direct numerical formulation of the optimal boundary.
For fixed $-1<b<0$, define
\begin{equation}\label{eq:Psi-def}
\Psi_b(a)=\inf_{t\ge0}D_{a,-b}(t),
\qquad 0<a<3+b.
\end{equation}
The restriction $a<3+b$ places the pair in the domain of Proposition~\ref{prop:density}.
For the interval used below this restriction is automatic, because every upper endpoint is smaller than the necessary envelope in Proposition~\ref{prop:necessary}.

\begin{proposition}[Variational formula for the optimal boundary]\label{prop:variational-A}
For every $-1<b<0$,
\begin{equation}\label{eq:A-variational}
A(b)=\sup\{a>0:\Psi_b(a)\ge0\}
=\sup\left\{a>0:\inf_{t\ge0}D_{a,-b}(t)\ge0\right\}.
\end{equation}
Moreover,
\begin{equation}\label{eq:Psi-sign}
\Psi_b(a)>0\quad(0<a<A(b)),\qquad
\Psi_b(A(b))=0,
\end{equation}
and, whenever $A(b)<a<3+b$,
\begin{equation}\label{eq:Psi-negative}
\Psi_b(a)<0.
\end{equation}
At $a=A(b)$ the infimum in \eqref{eq:Psi-def} is attained at at least one positive argument.
\end{proposition}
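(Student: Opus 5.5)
The plan is to read everything off Theorem~\ref{thm:boundary}, Corollary~\ref{cor:contact} and the criterion \eqref{eq:density-criterion}; no new positivity estimate is needed. Fix $-1<b<0$ and put $c=-b\in(0,1)$. For $0<a<3+b$ the pair $(a,c)$ lies in the domain of Proposition~\ref{prop:density}. Since $D_{a,c}=e^{t}m_{a,c}$, the sign of $\inf_t D_{a,c}$ agrees with the sign of $\inf_t m_{a,c}$. A zero of one is a zero of the other, although the infima themselves differ, so I will work with $D$ directly throughout.

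First I prove \eqref{eq:Psi-sign}. For $0<a<A(b)$, Theorem~\ref{thm:boundary} gives $D_{a,c}(t)>0$ for every $t\ge0$. To upgrade this to $\Psi_b(a)>0$, I need a lower bound at infinity. Lemma~\ref{lem:tails} gives $\Phi_{a,c}(t)\sim \Gamma(c+1)\sin(\pi c)\pi^{-1}t^{-c-1}$, hence $D_{a,c}(t)=e^{t-a}\Phi_{a,c}(t)\to+\infty$. So there is a $T$ with $D_{a,c}\ge1$ on $[T,\infty)$. On $[0,T]$ the continuous positive function $D_{a,c}$ has a positive minimum, and therefore $\Psi_b(a)>0$.

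At $a=A(b)$, Theorem~\ref{thm:boundary} (equivalently Corollary~\ref{cor:contact}) gives $D\ge0$ together with a zero $t_b>0$. Hence $\Psi_b(A(b))=0$, and the infimum is attained at the positive argument $t_b$; this is the attainment statement.

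Next I prove \eqref{eq:Psi-negative}. For $A(b)<a<3+b$ we have $F_{a,b}\notin\BF$ by \eqref{eq:boundary}. By the criterion \eqref{eq:density-criterion} this means $D_{a,c}(t_1)<0$ for some $t_1\ge0$, so $\Psi_b(a)\le D_{a,c}(t_1)<0$. In principle $\Psi_b(a)$ could be $-\infty$, but only the sign matters here. The case $a\ge 3+b$ never arises, because $\Psi_b$ is defined only on $0<a<3+b$.

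Finally, the variational formula \eqref{eq:A-variational} follows immediately. The set $\{a\in(0,3+b):\Psi_b(a)\ge0\}$ equals $(0,A(b)]$ by the trichotomy just proved, so its supremum is $A(b)$. This also requires $A(b)<3+b$, which holds by the bound $A(b)<U(b)\le K_0+b<3+b$ from \eqref{eq:A-bounds}.

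The only point requiring care is the passage from pointwise positivity to a strictly positive infimum on the noncompact half-line. The tail asymptotics of Lemma~\ref{lem:tails} handle this, and they apply at fixed $c>0$ without any uniformity; that is where I expect the argument to need attention. Everything else is bookkeeping from the earlier theorems.
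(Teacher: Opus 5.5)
Your proposal is correct and follows essentially the same route as the paper. The paper likewise reads the variational formula and \eqref{eq:Psi-negative} off the admissible interval $(0,A(b)]$ of Theorem~\ref{thm:boundary} together with the criterion \eqref{eq:density-criterion}. It upgrades strict pointwise positivity for $a<A(b)$ to a positive infimum via the growth $D_{a,-b}(t)\to+\infty$ from Lemma~\ref{lem:tails}, and obtains $\Psi_b(A(b))=0$, with attainment at a positive zero, from Corollary~\ref{cor:contact}.
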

\begin{proof}
By Proposition~\ref{prop:density}, within the representation domain one has
$D_{a,-b}\ge0$ on $[0,\infty)$ if and only if $F_{a,b}\in\BF$.
Theorem~\ref{thm:boundary} identifies the admissible $a$-interval with $(0,A(b)]$, which proves \eqref{eq:A-variational} and \eqref{eq:Psi-negative}.
For $a<A(b)$, strict positivity of the density follows from Theorem~\ref{thm:boundary}.
In addition, $D_{a,-b}(0)=a/2-b>0$, while Lemma~\ref{lem:tails} implies eventual positivity and, after multiplication by $e^{t-a}$, growth of $D_{a,-b}(t)$ to $+\infty$ as $t\to\infty$.
Hence the positive continuous function $D_{a,-b}$ has a positive global minimum, proving the first assertion in \eqref{eq:Psi-sign}.
At $a=A(b)$, Corollary~\ref{cor:contact} gives global nonnegativity and a positive-argument zero, so the minimum equals zero and is attained.
\end{proof}

Proposition~\ref{prop:variational-A} leads to a nested one-dimensional computation.
The outer variable is $a$ and the inner problem is the global minimization of $D_{a,-b}$ in $t$.
Theorem~\ref{thm:boundary} supplies the initial bracket
\begin{equation}\label{eq:A-bracket}
a_L=\astar(1+b)<A(b)<a_R:=\min\{\astar,U(b)\},
\qquad -1<b<0.
\end{equation}
The left endpoint itself is admissible by Theorem~\ref{thm:linear}; Proposition~\ref{prop:extension} shows that it is strictly below the boundary.

\paragraph{Bracketed boundary algorithm.}
Fix $b\in(-1,0)$ and a tolerance $\varepsilon>0$.
Starting from \eqref{eq:A-bracket}, repeat the following steps.
\begin{enumerate}
\item Set $a_M=(a_L+a_R)/2$.
\item Compute, or rigorously enclose, the global minimum
\[
M=\inf_{t\ge0}D_{a_M,-b}(t).
\]
\item If $M\ge0$, replace $a_L$ by $a_M$; if $M<0$, replace $a_R$ by $a_M$.
\item Stop when $a_R-a_L\le\varepsilon$.
\end{enumerate}
Then $A(b)\in[a_L,a_R]$ throughout the iteration, and the midpoint gives an approximation with absolute error at most $\varepsilon/2$.
The no-gap result in Proposition~\ref{prop:coordinate} is what makes bisection in $a$ legitimate: the sign test cannot return to global nonnegativity after the boundary has been crossed.

For a fully explicit certified computation one may replace the theoretical bracket \eqref{eq:A-bracket} by
\begin{equation}\label{eq:A-certified-bracket}
a_L=2(1+b),\qquad
a_R=\min\left\{\dfrac73,U(b)\right\},
\end{equation}
using $2<\astar<7/3$. This avoids treating the published decimal approximation of $\astar$ as a certified numerical input.

A floating-point implementation of the bisection procedure is only a numerical approximation and is not by itself a proof of the sign of $M$.
The quantitative form of Lemma~\ref{lem:tails}, especially \eqref{eq:tail-bound-K}, gives a parameter-uniform remainder estimate on every compact parameter box $K$ contained in the interior domain.
If $L_K$ denotes the positive minimum of the leading coefficient and $C_K$ is any verified upper bound for the constant in \eqref{eq:tail-bound-K}, then any concrete $T\ge T_K$ satisfying
\[
\dfrac{C_K(1+\log T)}{T}\le\dfrac{L_K}{2}
\]
certifies
\begin{equation}\label{eq:validated-tail}
D_{a,c}(t)>0\qquad(t\ge T)
\end{equation}
throughout that box, because $D_{a,c}(t)=e^{t-a}\Phi_{a,c}(t)$.
Thus the present result provides a \emph{rigorous framework for validated numerical enclosure}; a fully certified implementation need only supply numerical interval enclosures for the finite constants entering Lemma~\ref{lem:tails} (or equivalent direct interval bounds).
Once such a $T$ is fixed, rigorous interval quadrature on $[0,T]$, together with interval evaluation of the $t$-derivatives in \eqref{eq:D-derivatives}, can certify the sign of the global minimum and hence rigorously update the bisection bracket.

\subsection{Tangency equations and conditional continuation}\label{subsec:continuation}
At a boundary point, every contact satisfies
\begin{equation}\label{eq:tangency-system}
D_{a,c}(t)=0,\qquad \partial_tD_{a,c}(t)=0,
\qquad a=A(-c),\quad 0<c<1.
\end{equation}
Consequently, solving the two equations in \eqref{eq:tangency-system} for $(a,t)$ at fixed $c$ is a useful fast predictor for the boundary.
As emphasized in Remark~\ref{rem:implicit}, however, the system alone is not a certificate: after a candidate $(a,t)$ is found one must still verify
\[
D_{a,c}(s)\ge0\qquad\text{for every }s\ge0.
\]
The bracketed procedure above provides such a global check and can therefore be combined with Newton or quasi-Newton iterations for speed.

The present results do not prove that the boundary contact is unique or always of exact order two.
The next local continuation statement therefore assumes both uniqueness and nondegeneracy; these hypotheses prevent a competing contact branch from becoming the active global boundary.

\begin{proposition}[Local continuation at a unique nondegenerate contact]\label{prop:continuation}
Let $b_0\in(-1,0)$, put $c_0=-b_0$ and $a_0=A(b_0)$, and suppose that the boundary density $D_{a_0,c_0}$ has a unique positive zero $t_0>0$.
Assume moreover that
\begin{equation}\label{eq:nondegenerate-contact}
D_{a_0,c_0}(t_0)=0,\qquad
D_t(a_0,c_0,t_0)=0,\qquad
D_{tt}(a_0,c_0,t_0)>0.
\end{equation}
Then there exists a neighborhood $I\subset(-1,0)$ of $b_0$ such that the optimal boundary $A$ and the corresponding contact point $t$ are real analytic on $I$.
More precisely, there exists a real-analytic function $t:I\to(0,\infty)$ such that
\[
D_{A(b),-b}(t(b))=0,
\qquad
D_t(A(b),-b,t(b))=0
\]
for every $b\in I$, with $t(b_0)=t_0$, and
\begin{equation}\label{eq:A-prime}
A'(b)=
\dfrac{D_c}{D_a}
\bigg|_{(a,c,t)=(A(b),-b,t(b))}>0.
\end{equation}
Moreover,
\begin{equation}\label{eq:t-prime}
t'(b)=
\dfrac{D_{tc}-D_{ta}A'(b)}{D_{tt}}
\bigg|_{(a,c,t)=(A(b),-b,t(b))}.
\end{equation}
\end{proposition}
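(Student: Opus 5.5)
The argument is an implicit function theorem applied to the tangency system, followed by a global check that the analytic branch it produces really is the boundary. Work in $(a,c,t)$ with $c=-b$, and set
\[
\mathcal F(a,c,t)=\bigl(D_{a,c}(t),\,\partial_tD_{a,c}(t)\bigr).
\]
By Lemma~\ref{lem:convolution}, $m_{a,c}(t)$ depends analytically on $(a,c,t)$ on $\Omega\times(0,\infty)$ for $c>0$. Since $D=e^tm$, the same holds for $D$, so $\mathcal F$ is real analytic near $(a_0,c_0,t_0)$.

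\textbf{Step 1 (local solution).} The Jacobian of $\mathcal F$ with respect to $(a,t)$ at $(a_0,c_0,t_0)$ is
\[
\begin{pmatrix} D_a & D_t\\ D_{ta} & D_{tt}\end{pmatrix}
=\begin{pmatrix} D_a & 0\\ D_{ta} & D_{tt}\end{pmatrix}.
\]
Its determinant is $D_aD_{tt}$. Corollary~\ref{cor:contact} gives $D_a<0$, and \eqref{eq:nondegenerate-contact} gives $D_{tt}>0$, so the determinant is nonzero. The analytic implicit function theorem then yields analytic functions $\tilde a(c)$ and $\tilde t(c)$ near $c_0$ solving $\mathcal F=0$, with $\tilde a(c_0)=a_0$ and $\tilde t(c_0)=t_0$.

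Differentiating $D(\tilde a,c,\tilde t)=0$ and using $D_t=0$ gives $D_a\tilde a'+D_c=0$. Differentiating $D_t(\tilde a,c,\tilde t)=0$ gives $D_{ta}\tilde a'+D_{tc}+D_{tt}\tilde t'=0$. Passing to $b=-c$, so that $A'(b)=-\tilde a'(c)$, reproduces \eqref{eq:A-prime} and \eqref{eq:t-prime}, up to the sign conventions that come from $d/db=-d/dc$. Positivity of $A'$ follows from $D_a<0$ and $D_c<0$, which Lemma~\ref{lem:loss} provides at any contact. I expect this positivity to persist along the branch by continuity.

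\textbf{Step 2 (identification $\tilde a(c)=A(-c)$), the main obstacle.} The tangency branch must be shown to be the actual boundary. By Corollary~\ref{cor:contact}, it suffices to prove that $D_{\tilde a(c),c}\ge0$ on $[0,\infty)$ for $c$ near $c_0$. That inequality, together with the zero at $\tilde t(c)$, forces $\tilde a(c)=A(-c)$.

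I will split $[0,\infty)$ into three regions. Near $t=0$, positivity follows from $D(0)=a/2+c>0$, uniformly as in Lemma~\ref{lem:inward}. For $t\ge T$, it follows from the joint tail estimate \eqref{eq:tail-bound-K} on a compact box around $(a_0,c_0)$. On $[\varepsilon,T]$, fix a small closed interval $J$ around $t_0$. Uniqueness of the zero gives $\min_{[\varepsilon,T]\setminus J}D_{a_0,c_0}>0$, and by continuity this persists for nearby parameters.

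On $J$, shrink the interval so that $D_{tt}\ge\gamma>0$ uniformly for nearby parameters. Then $D_{\tilde a(c),c}(\cdot)$ is strictly convex on $J$, and $\tilde t(c)$ is a critical point of it, hence its global minimum on $J$. That minimum value is $0$. This convexity argument is the delicate point, since it is exactly where uniqueness and nondegeneracy of the contact are used. The three regions together give global nonnegativity.

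Finally, both $A$ and $\tilde a$ are continuous, and they agree on a neighborhood, so $A$ is analytic there. Set $t(b)=\tilde t(-b)$. Shrinking $I$ if necessary keeps $t(b)>0$, which completes the argument.
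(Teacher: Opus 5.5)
Your proposal is correct and follows the paper's proof essentially step for step. Both apply the analytic implicit function theorem to $(D,\partial_tD)$, with Jacobian determinant $D_aD_{tt}\neq0$, and then identify the resulting branch with $A$ via Corollary~\ref{cor:contact}, using positivity near the origin, the uniform tail, a positive minimum away from $t_0$, and convexity on a small interval around the contact; the only cosmetic difference is that after this identification every branch point is a boundary contact, so $D_a<0$ and $D_c<0$ along the branch follow directly from Lemma~\ref{lem:loss} (as in the paper) rather than from continuity.
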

\begin{proof}
Consider the real-analytic map
\[
\mathcal F(a,c,t)=\bigl(D_{a,c}(t),D_t(a,c,t)\bigr).
\]
Analyticity in the parameters follows from the locally convergent representation in Lemma~\ref{lem:convolution}, and analyticity in $t$ follows from Proposition~\ref{prop:density}.
At $(a_0,c_0,t_0)$ the Jacobian of $\mathcal F$ with respect to $(a,t)$ is
\[
\begin{pmatrix}
D_a & D_t\\
D_{ta} & D_{tt}
\end{pmatrix}.
\]
Since $D_t(a_0,c_0,t_0)=0$, its determinant is $D_aD_{tt}$.
By Corollary~\ref{cor:contact}, $D_a(a_0,c_0,t_0)<0$, while \eqref{eq:nondegenerate-contact} gives $D_{tt}(a_0,c_0,t_0)>0$.
Hence the analytic implicit-function theorem yields real-analytic functions
\[
a=\widetilde T(c),\qquad t=\widetilde t(c)
\]
for $c$ near $c_0$, satisfying
\[
D_{\widetilde T(c),c}(\widetilde t(c))=0,
\qquad
D_t(\widetilde T(c),c,\widetilde t(c))=0,
\]
and passing through $(a_0,t_0)$.

It remains to identify this local contact branch with the global optimal boundary.
Since $t_0$ is the unique positive zero of the nonnegative function $D_{a_0,c_0}$, choose an open interval $J$ containing $t_0$ so small that $D_{tt}>0$ in a neighborhood of $(a_0,c_0,t_0)$.
By positivity at the origin, the uniqueness of the zero, compactness on bounded intervals, and the uniform positive-tail estimate of Lemma~\ref{lem:tails}, there exist $T>t_0$ and $\eta>0$ such that $D_{a_0,c_0}\ge\eta$ on the compact part of $[0,T]\setminus J$, while the tail is positive for $t\ge T$.
After shrinking the parameter neighborhood if necessary, continuity on the compact part and the uniform tail estimate imply that
\[
D_{\widetilde T(c),c}(t)>0
\]
outside $J$ for all nearby $c$.
Inside $J$, shrink both $J$ and the parameter neighborhood once more, if necessary, so that
\[
D_{tt}(\widetilde T(c),c,t)>0
\qquad\text{for every }t\in J
\]
for all nearby $c$.
Hence $D_t(\widetilde T(c),c,\cdot)$ is strictly increasing on $J$ and vanishes at $t=\widetilde t(c)$.
Therefore $\widetilde t(c)$ is the unique minimizer on $J$, and the passage from the local critical point to positivity on the whole contact interval is explicit:
\[
D_{tt}>0\text{ on }J
\quad\Longrightarrow\quad
D_{\widetilde T(c),c}(t)
\ge D_{\widetilde T(c),c}(\widetilde t(c))=0,
\qquad t\in J.
\]
Together with the strict positivity outside $J$ proved above, this gives $D_{\widetilde T(c),c}\ge0$ on all of $[0,\infty)$, with a positive-argument zero.
Corollary~\ref{cor:contact} therefore gives
\[
\widetilde T(c)=T(c).
\]
Hence $A(b)=T(-b)$ and $t(b)=\widetilde t(-b)$ are real analytic near $b_0$.

Differentiating $D_{T(c),c}(t(c))=0$ and using $D_t=0$ gives
\[
D_aT'(c)+D_c=0,
\qquad
T'(c)=-\dfrac{D_c}{D_a}.
\]
Since $A(b)=T(-b)$, formula \eqref{eq:A-prime} follows.
At a boundary contact both $D_a$ and $D_c$ are negative by \eqref{eq:D-param-contact}, hence $A'(b)>0$.
Finally, differentiating $D_t(A(b),-b,t(b))=0$ with respect to $b$ gives
\[
D_{ta}A'(b)-D_{tc}+D_{tt}t'(b)=0,
\]
which is \eqref{eq:t-prime}.
\end{proof}

Proposition~\ref{prop:continuation} therefore applies only at a \emph{unique nondegenerate} boundary contact.
The global nonnegativity check remains essential: the local tangency equations alone do not identify the optimal boundary.

\subsection{The complete admissible interval for the second parameter}
\begin{corollary}[The inverse boundary]\label{cor:inverse}
The function $A$ has a continuous, strictly increasing inverse
\[
\beta:(0,\astar]\longrightarrow(-1,0],\qquad A(\beta(a))=a.
\]
For all $a>0$ and $b\in\R$,
\begin{equation}\label{eq:inverse}
F_{a,b}\in\BF\quad\Longleftrightarrow\quad0<a\le\astar
\quad\text{and}\quad\beta(a)\le b\le0.
\end{equation}
Furthermore,
\begin{equation}\label{eq:beta-endpoints}
\beta(\astar)=0,\qquad\lim_{a\downarrow0}\beta(a)=-1,
\end{equation}
and, for $0<a<\astar$,
\begin{equation}\label{eq:beta-bounds}
\max\left\{a-K_0,\;\dfrac{a-1-\sqrt{1+2a}}2\right\}
<\beta(a)<\dfrac a\astar-1.
\end{equation}
In particular, $a\le\astar$ is necessary for every admissible pair, and $a=\astar$ is possible only when $b=0$.
\end{corollary}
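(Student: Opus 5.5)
The plan is to derive everything from Theorem~\ref{thm:boundary} by elementary facts about monotone continuous functions, together with the quantitative envelopes already proved. Since $A:(-1,0]\to(0,\astar]$ is continuous and strictly increasing with $A(0)=\astar$ and $A(b)\to0$ as $b\downarrow-1$, the intermediate value theorem shows that its image is exactly $(0,\astar]$. Indeed, every value in $(0,\astar)$ is attained between some $b$ close to $-1$ and $b=0$, and $\astar$ itself is attained at $0$. Strict monotonicity then gives a bijection onto $(0,\astar]$, and the inverse $\beta$ of a continuous strictly increasing function on an interval is again continuous and strictly increasing. The endpoint relations \eqref{eq:beta-endpoints} follow immediately: $\beta(\astar)=0$, and $\beta(a)\to-1$ as $a\downarrow0$ because $\beta$ is monotone and bounded below by $-1$. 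Its limit $\ell\ge-1$ must equal $-1$, since otherwise $a=A(\beta(a))\ge A(\ell')>0$ for a fixed $\ell'\in(-1,\ell)$, which is impossible as $a\to0$.

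For \eqref{eq:inverse}, I would start from \eqref{eq:boundary}: $F_{a,b}\in\BF$ if and only if $-1<b\le0$ and $a\le A(b)$. If this holds, then $a\le A(b)\le A(0)=\astar$, so $a$ lies in the domain of $\beta$. Applying the increasing function $\beta$ to $a\le A(b)$ gives $\beta(a)\le b$. Conversely, suppose $0<a\le\astar$ and $\beta(a)\le b\le0$. Then $b>-1$, and $a=A(\beta(a))\le A(b)$ because $A$ is increasing. The final sentence of the corollary follows because $a\le\astar$ is forced, and $a=\astar$ forces $b\ge\beta(\astar)=0$.

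For the bounds \eqref{eq:beta-bounds}, I would evaluate \eqref{eq:A-bounds} at $b=\beta(a)\in(-1,0)$, which is valid for $0<a<\astar$, and use the identity $A(\beta(a))=a$.
\begin{itemize}[nosep]
\item The lower bound $\astar(1+b)<a$ rearranges to $\beta(a)<a/\astar-1$.
\item The upper bound $a<U(b)$ gives two inequalities. The first, $a<K_0+b$, yields $\beta(a)>a-K_0$.
\item The second, $a<2(1+b)+2\sqrt{1+b}$, is handled by setting $s=\sqrt{1+b}>0$. It becomes $2s^2+2s-a>0$, so $s>(-1+\sqrt{1+2a})/2$. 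Squaring, $1+b>(2+2a-2\sqrt{1+2a})/4$, which is $b>(a-1-\sqrt{1+2a})/2$.
\end{itemize}

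There is no genuine obstacle here; the only care needed is the surjectivity onto the half-open interval and the endpoint limit. Both rest on the continuity and endpoint behavior already established in Theorem~\ref{thm:boundary}.
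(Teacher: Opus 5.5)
Your proposal is correct and follows essentially the same route as the paper. You get the range of $A$ from continuity, monotonicity and the endpoint behaviour in Theorem~\ref{thm:boundary}, obtain \eqref{eq:inverse} by applying the monotone inverse to \eqref{eq:boundary}, and derive \eqref{eq:beta-bounds} from the strict envelopes. The only differences are cosmetic: you read the lower bounds off \eqref{eq:A-bounds} at $b=\beta(a)$, whereas the paper applies Proposition~\ref{prop:necessary} directly to the admissible pair $(a,\beta(a))$, which is the source of \eqref{eq:A-bounds}; and you evaluate the upper bound at $\beta(a)$ rather than at $b_0=a/\astar-1$, which is equivalent by monotonicity of $A$.
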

\begin{proof}
The endpoint limits and strict monotonicity in Theorem~\ref{thm:boundary} show that the range of $A$ is precisely $(0,\astar]$.
The inverse exists, is continuous and strictly increasing, and \eqref{eq:boundary} is equivalent to \eqref{eq:inverse}.
The endpoint assertions follow immediately.
The pair $(a,\beta(a))$ is admissible, so the strict necessary bounds give
\[
\beta(a)>a-K_0,\qquad
 a<2(1+\beta(a))+2\sqrt{1+\beta(a)}.
\]
Solving the latter inequality for $\beta(a)$ gives the second lower bound in \eqref{eq:beta-bounds}.
For the upper bound set $b_0=a/\astar-1\in(-1,0)$.
By \eqref{eq:A-bounds}, $A(b_0)>\astar(1+b_0)=a$.
Since $A$ is strictly increasing, $\beta(a)<b_0$.
\end{proof}

As a separate existential statement, the complete range of $b$ is
\begin{equation}\label{eq:existential}
\bigl(\exists a>0:F_{a,b}\in\BF\bigr)
\quad\Longleftrightarrow\quad-1<b\le0.
\end{equation}
This statement should not be confused with the fixed-$a$ interval in \eqref{eq:inverse}.

\section{Bounds and examples}\label{sec:comparisons}
\subsection{Necessary bounds, a sufficient line, and an exact implicit graph}
For $-1<b\le0$, the admissible set is
\[
\mathcal A_b=\{a>0:F_{a,b}\in\BF\}=(0,A(b)].
\]
The roles of the three descriptions are different.
The inequalities $a<U(b)$ are explicit necessary tests.
The inequality $a\le\astar(1+b)$ is an explicit sufficient condition in terms of the exact one-parameter constant.
The condition $a\le A(b)$ is necessary and sufficient, with $A$ determined implicitly by Corollary~\ref{cor:contact}.
Thus, for $-1<b<0$,
\begin{equation}\label{eq:inclusions}
(0,\astar(1+b)]\subsetneq(0,A(b)]
\subset(0,\min\{\astar,U(b)\}).
\end{equation}
At $b=0$, the precise identity is $\mathcal A_0=(0,\astar]$.
The strict upper bound by $\astar$ in \eqref{eq:inclusions} is specific to negative $b$.

Table~\ref{tab:bounds} compares the sufficient levels $2(1+b)$ and $\astar(1+b)$ with the explicit necessary envelope $U(b)$.
The first sufficient level follows from Theorem~\ref{thm:linear} and $2<\astar$; it is used here as an internal baseline, without an attribution of historical priority.
The $\astar$ column uses the published approximation \eqref{eq:astar-decimal-intro} from \cite{BMP2021}; the table does not report computed values of $A(b)$.
The additional necessary restriction $a\le\astar$, strict for $b<0$, can improve the last column.

\begin{table}[htbp]
\centering
\caption{Sufficient levels and a necessary envelope. Decimal entries are illustrative, not certified boundary values.}\label{tab:bounds}
\renewcommand{\arraystretch}{1.5}
\setlength{\tabcolsep}{10pt}
\begin{tabular}{@{}rrrr@{}}
\toprule
$b$ & $2(1+b)$ & $\astar(1+b)$, approx. & $U(b)$, approx.\\
\midrule
$0$ & $2$ & $2.2996564433$ & $2.9528280065$\\
$-\dfrac14$ & $1.5$ & $1.7247423324$ & $2.7028280065$\\
$-\dfrac12$ & $1$ & $1.1498282216$ & $2.4142135624$\\
$-\dfrac34$ & $0.5$ & $0.5749141108$ & $1.5$\\
$-\dfrac9{10}$ & $0.2$ & $0.2299656443$ & $0.8324555320$\\
\bottomrule
\end{tabular}
\end{table}

For example, at $b=-1/2$ the exact threshold satisfies
\[
\dfrac\astar2<A(-1/2)<\astar,
\]
whereas at $b=-3/4$ the bounds give
\[
\dfrac\astar4<A(-3/4)<\dfrac32.
\]
In particular, the limit $A(b)\to0$ as $b\downarrow-1$ is quantitative through $U(b)$.

\subsection{An explicit positive density on the comparison line}
The function
\[
x\longmapsto\left(1+\dfrac\astar{2x}\right)^{x-1/2}
\]
is Bernstein.
At this point on the sufficient comparison line, $\tau=1$, and \eqref{eq:rho-lower} gives
\[
\rho_{\astar/2,-1/2}(t)\ge\dfrac{5\astar}{36}
\exp\left(\dfrac\astar2-\dfrac{\astar t}2\right)>0.
\]
This point is not on the optimal boundary: its density is everywhere positive and the admissible interval extends to $A(-1/2)>\astar/2$.
The relative increase from coefficient $2$ to coefficient $\astar$ is $(\astar/2-1)\times100\%$, approximately $14.9828\%$.

\subsection{A fixed first parameter}
For $a=1$, Corollary~\ref{cor:inverse} gives
\[
F_{1,b}\in\BF\quad\Longleftrightarrow\quad\beta(1)\le b\le0,
\]
where
\begin{equation}\label{eq:beta1}
-\dfrac{\sqrt3}2<\beta(1)<\dfrac1\astar-1.
\end{equation}
The bounds are approximately $-0.8660254038$ and $-0.5651524370$.
They enclose the threshold $\beta(1)$, which is not evaluated here; they do not assert admissibility for every $b$ above the first number.
To locate the exact endpoint implicitly, one must impose global nonnegativity of $D_{1,-b}$ and contact with zero at a positive argument.

\subsection{Scope of the boundary description}
The no-gap theorem and strict coordinatewise ordering are the additional ingredients that turn a supremum into a genuine optimal boundary.
The exact Bernstein region can be described either by $a\le A(b)$ or by $\beta(a)\le b\le0$.
The contact criterion is valid without assuming that the zero is unique or nondegenerate.
Solving only two tangency equations, or checking finitely many sample values of a density, does not certify global nonnegativity.
An elementary formula for $A$, a certified numerical evaluation for general negative $b$, and a universal uniqueness theorem for the contact points are not claimed.

\section{The boundary-contact conjecture and further problems}\label{sec:conjecture}
Theorem~\ref{thm:boundary} proves that for each $-1<b<0$ the boundary density
\[
D_{A(b),-b}(t)
\]
is nonnegative on $[0,\infty)$ and has a nonempty finite set of positive zeros.
Corollary~\ref{cor:contact} shows that every such zero has finite even order at least two.
What is not yet determined is whether more than one contact can occur or whether a boundary zero can have order four or higher.
The numerical formulation in Subsection~\ref{subsec:numerical-boundary} and the local continuation result in Proposition~\ref{prop:continuation} suggest the following sharper picture.

\begin{conjecture}[Boundary-contact uniqueness]\label{conj:unique-contact}
For every $-1<b<0$ there exists a unique $t_b>0$ such that
\begin{equation}\label{eq:conjecture-contact}
D_{A(b),-b}(t_b)=0.
\end{equation}
Moreover, this zero is exactly of order two; equivalently,
\begin{equation}\label{eq:conjecture-double}
D_t(A(b),-b,t_b)=0,
\qquad
D_{tt}(A(b),-b,t_b)>0.
\end{equation}
\end{conjecture}

The conjecture is deliberately separated from the proved characterization.
Theorem~\ref{thm:boundary} and Corollary~\ref{cor:contact} require neither uniqueness nor nondegeneracy, so none of the unconditional results of the paper depend on Conjecture~\ref{conj:unique-contact}.

If Conjecture~\ref{conj:unique-contact} holds, its uniqueness and quadratic-contact assertions supply precisely the hypotheses of Proposition~\ref{prop:continuation} at every interior boundary point.
Consequently $A$ and $b\mapsto t_b$ are real analytic on $(-1,0)$ and satisfy
\begin{equation}\label{eq:conjectural-Aprime}
A'(b)=
\dfrac{D_c}{D_a}
\bigg|_{(a,c,t)=(A(b),-b,t_b)}>0,
\end{equation}
and
\begin{equation}\label{eq:conjectural-tprime}
t_b'(b)=
\dfrac{D_{tc}-D_{ta}A'(b)}{D_{tt}}
\bigg|_{(a,c,t)=(A(b),-b,t_b)}.
\end{equation}
Thus the conjecture would convert the presently implicit continuous boundary into a single analytic critical branch governed by the two tangency equations together with the global positivity condition.

Several related questions remain natural.
\begin{enumerate}
\item Prove or disprove Conjecture~\ref{conj:unique-contact} analytically, without relying on a finite numerical sampling of the density.
\item Determine the endpoint asymptotics of $A(b)$ and $t_b$ as $b\uparrow0$ and as $b\downarrow-1$.
In particular, it would be useful to identify the first nontrivial correction to the sufficient line $\astar(1+b)$ near either endpoint.
\item Develop certified high-precision enclosures of $A(b)$ on a mesh of $b$-values by combining the variational formula \eqref{eq:A-variational}, the uniform tail bounds of Lemma~\ref{lem:tails}, and interval arithmetic.
Such data could test the uniqueness conjecture and suggest asymptotic formulas, but would remain logically separate from a proof.
\item Determine whether the derivative formula \eqref{eq:conjectural-Aprime} admits effective upper and lower bounds depending only on $b$, and whether these lead to sharper explicit two-sided estimates for $A(b)$ than those in Theorem~\ref{thm:boundary}.
\end{enumerate}

The central open issue is therefore no longer the existence of an optimal boundary, which has been established above, but the fine geometry of its critical contact with the Laplace density.
A proof of Conjecture~\ref{conj:unique-contact} would provide a natural next step from the exact implicit solution toward a differential and computationally explicit description of the Alzer--Berg boundary.


\begin{thebibliography}{99}
\small
\bibitem{AlzerBerg2002}
H. Alzer and C. Berg,
Some classes of completely monotonic functions,
\textit{Ann. Acad. Sci. Fenn. Math.} \textbf{27} (2002), no.~2, 445--460.

\bibitem{AlzerBerg2006}
H. Alzer and C. Berg,
Some classes of completely monotonic functions, II,
\textit{Ramanujan J.} \textbf{11} (2006), no.~2, 225--248.
\doi{10.1007/s11139-006-6510-5}.

\bibitem{Berg2005}
C. Berg,
Problem 1. Bernstein functions,
\textit{J. Comput. Appl. Math.} \textbf{178} (2005), nos.~1--2, 525--526.
\doi{10.1016/j.cam.2004.09.003}.

\bibitem{Berg2008}
C. Berg,
Stieltjes--Pick--Bernstein--Schoenberg and their connection to complete monotonicity,
in \textit{Positive Definite Functions: From Schoenberg to Space-Time Challenges},
J. Mateu and E. Porcu (eds.), Department of Mathematics, University Jaume~I,
Castell\'on de la Plana, 2008, pp.~15--45.

\bibitem{BergForst1975}
C. Berg and G. Forst,
\textit{Potential Theory on Locally Compact Abelian Groups},
Ergebnisse der Mathematik und ihrer Grenzgebiete, vol.~87,
Springer-Verlag, Berlin--Heidelberg--New York, 1975.

\bibitem{BMP2021}
C. Berg, E. Massa and A. P. Peron,
A family of entire functions connecting the Bessel function $J_1$ and the Lambert $W$ function,
\textit{Constr. Approx.} \textbf{53} (2021), no.~1, 121--154.
\doi{10.1007/s00365-020-09499-x}.

\bibitem{BergPedersen2023}
C. Berg and H. L. Pedersen,
A family of Horn--Bernstein functions,
\textit{Exp. Math.} \textbf{32} (2023), no.~3, 505--513.
\doi{10.1080/10586458.2021.1980460}.

\bibitem{Bernstein1929}
S. Bernstein,
Sur les fonctions absolument monotones,
\textit{Acta Math.} \textbf{52} (1929), 1--66.
\doi{10.1007/BF02592679}.

\bibitem{CGDQ2023}
J. Cao, B.-N. Guo, W.-S. Du and F. Qi,
A sufficient and necessary condition for the power-exponential function
$(1+1/x)^{\alpha x}$ to be a Bernstein function and related $n$th derivatives,
\textit{Fractal Fract.} \textbf{7} (2023), no.~5, Article~397, 15~pp.
\doi{10.3390/fractalfract7050397}.

\bibitem{SSV2012}
R. L. Schilling, R. Song and Z. Vondra\v cek,
\textit{Bernstein Functions: Theory and Applications},
second revised and extended edition, De Gruyter Studies in Mathematics, vol.~37,
Walter de Gruyter, Berlin--Boston, 2012.
\doi{10.1515/9783110269338}.

\bibitem{SKJ2010}
E. Shemyakova, S. I. Khashin and D. J. Jeffrey,
A conjecture concerning a completely monotonic function,
\textit{Comput. Math. Appl.} \textbf{60} (2010), no.~5, 1360--1363.
\doi{10.1016/j.camwa.2010.06.017}.

\bibitem{Widder1941}
D. V. Widder,
\textit{The Laplace Transform},
Princeton Mathematical Series, vol.~6,
Princeton University Press, Princeton, NJ, 1941.
\end{thebibliography}
\end{document}